\newcommand{\cO}{\mathcal{O}}
\newcommand{\res}{\vert}
\newcommand{\la}{\langle}
\newcommand{\ra}{\rangle}
\newcommand{\ad}{\mathop{{\rm ad}}\nolimits}
\title{Classification of positive energy representations\\ of the Virasoro group}
\author{Karl-Hermann Neeb, Hadi Salmasian}
\begin{document} 

\maketitle

\begin{abstract}
We give a complete classification of all  positive energy unitary 
representations of the Virasoro group. More precisely, we prove that every such 
representation can be expressed in an essentially  
unique way as a direct integral of irreducible highest weight representations. 
\end{abstract}

\section{Introduction}

Let $\mathrm{Diff}(S^1)_+$ denote the 
group of orientation-preserving smooth diffeomorphisms of the circle. It is well known that
 $\mathrm{Diff}(S^1)_+$ is a smooth Fr\'echet--Lie group
\cite[Ex. 4.4.6]{Ha82}. Let $\mathsf{Vir}$ denote the 
simply connected Virasoro group, that is, the central extension by $\R$ of the simply connected covering of $\mathrm{Diff}(S^1)_+$  
(see Section \ref{sec-ViraS}). Unitary representations of $\mathsf{Vir}$ have been studied for more than three decades. Among unitary representations of 
$\mathsf{Vir}$, the representations of \emph{positive energy} play an important role in physics  \cite{GoKeO} \cite{Kac82}, \cite{segal}.
The positive energy representations are closely related to the unitarizable modules of the Virasoro algebra $\g{vir}$, which is the (unique) central extension by $\C$ of the 
Lie algebra $\g{diff}$  
of complex
vector fields on the circle with finite Fourier 
series. The  Lie bracket of $\g{vir}$ is given by
\begin{equation}
\label{cocycle-for}
\left[e^{imt}\frac{d}{dt},e^{int}\frac{d}{dt}\right]
=i(m-n)e^{(m+n)i}\frac{d}{dt}+\delta_{m,-n}\frac{m(m^2-1)}{12}\kappa. 
\end{equation}
The classification of  irreducible unitarizable highest weight modules of
$\g{vir}$ is by now well known \cite{GoKeO},  \cite{KaRa}, \cite{Langlands}. Furthermore,  every such module of $\g{vir}$
can be integrated to an irreducible positive energy unitary representation $(\pi,\ccH)$ of $\mathsf{Vir}$ (see \cite{GoWa} and \cite{Toledano}). Here positive energy means that the (unbounded self-adjoint) energy operator 
$\dd\pi(-\bd_0)$, where 
$\bd_0:=i\frac{d}{dt}$, has a non-negative spectrum
(see Definition \ref{pos-energy-dfeen}). 

In this paper, we give a complete classification of the (in general reducible) positive energy representations of $\mathsf{Vir}$. Our main result is Theorem 
\ref{thm-MAIN}, in which we prove that every positive energy representation of
$\mathsf{Vir}$ can be expressed in an essentially unique way as a direct integral of irreducible unitary highest weight representations. 
Let us briefly sketch the main ideas of the proof. It is well known that the homogeneous
manifold
$\mathrm{Diff}(S^1)_+/S^1$ 
has an invariant complex structure. (This complex manifold is usually called the Kirillov-Yuriev domain \cite{KiYu}.)
We show that positive energy representations of $\mathsf{Vir}$ can be realized on Hilbert spaces of sections of certain holomorphic vector bundles $\W_\rho$ 
(defined in Section \ref{sec-hvebun}) over the Kirillov--Yuriev domain. 
The construction of $\W_\rho$ is canonical. What is more subtle is to equip $\W_\rho$ with an invariant complex structure. We will achieve this using the  key observation that the $\W_\rho$ are indeed bundles associated to a ``universal'' 
principal bundle $\bS_u$ (defined in Section 
\ref{ssec-univ}). Therefore an invariant complex stucture on $\bS_u$ will induce the suitable invariant complex stuctures on the $\W_\rho$. The complex structure of the principal bundle $\bS_u$ is in turn obtained using the one-parameter family of complex structures on $\mathsf{Vir}$ that is constructed by Lempert \cite{lempert}. After realizing positive energy representations on Hilbert spaces of holomorphic sections, we can use the technique 
of holomorphic induction from \cite{Ne-real}, as well as some new ideas concerning holomorphic reproducing kernels with values in a $C^*$-algebra, to obtain the existence and uniqueness of the desired direct integral decomposition.

We now summarize the content of each of the  sections of this paper. In Section 
\ref{sec-InfiniteDim}, we briefly review the basic theory of infinite dimensional manifolds and Lie groups. 
In Section \ref{sec-cplxstr}, we first study invariant complex structures on a general class of principal bundles that arise from
Lie groups that have a homogeneous space which can be equipped with an invariant complex structure. Next we 
study the properties of $G$-invariant reproducing kernel Hilbert spaces of sections of vector bundles associated to the aforementioned principal bundles. 
For reducible positive energy representations, self-adjoint operators such as the energy operator can have 
a continuous spectrum and therefore one cannot expect to use 
methods that are typically based on weight vectors. To circumvent this issue, we need a technical tool, called  \emph{Arveson spectral subspaces}, which will be presented in Section \ref{sec-arv}.
It is worth mentioning that the setup of Section 3 is quite general and we expect that it will be applicable to other classes of 
Fr\'echet Lie groups. In Section \ref{sec-ViraS}, we focus our attention on $\mathsf{Vir}$, construct  suitable direct integrals of positive energy representations, and prove our main theorem.\\

\noindent\textbf{Acknowledgement.} The second author thanks Professor Laszlo Lempert for illuminating discussions about 
his work on the Virasoro group. During the completion of this project, the second author was supported by an NSERC Discovery Grant and the Emerging 
Field Project ``Quantum Geometry'' of FAU Erlangen--N\"{u}rnberg.

\section{Infinite dimensional manifolds}
\label{sec-InfiniteDim}

Let $E$ and $F$ be locally convex spaces  and $U\sseq E$ be open. Given a map $f:U\to F$, for every $x\in U$ and every $v\in E$ we define the directional derivative $\partial_vf(x)$ by 
\[
\partial_v f(x):=\lim_{t \to 0} \frac{1}{t}(f(x+tv) -f(x))
\]
if the limit exists. 
A continuous map $f : U \to F$ is called \emph{continuously differentiable}, or 
$\cC^1$, if  the maps 
$\partial_vf:U\to F$ 
are defined for all $v\in E$, and the map 
\[
\dd f:U\times E\to F\ ,\ (x,v)\mapsto \dd f(x)(v):=\partial_vf(x)
\]
is continuous.
Similarly, the map $f$ is called $\cC^k$ for some positive integer $k$,  
if the  maps 
\[
\dd^j f:U\times E^j\to F\ ,\ 
(x,v_1,\ldots,v_j)\mapsto 
\dd^j f(x)(v_1,\ldots,v_j):=(\partial_{v_1} \cdots \partial_{v_j}f)(x)
\]
are defined and continuous for every $1\leq j\leq k$. Finally, $f$ is called \emph{smooth}, or $\cC^\infty$, if it is $\cC^k$ for every $k\in\N$.

Based on the above notion of smooth maps we can define smooth locally convex manifolds (see \cite{Ha82} and  \cite{Ne06} for detailed expositions). 
From now on, all manifolds are assumed to be smooth. If $M$ and $N$ are manifolds, then the set of smooth maps from 
 $M$ to  $N$ is denoted by $ \cC^\infty(M,N)$.

Let $M$ and $N$ be manifolds and $f:M\to N$ be a $\cC^1$ map. 
If $p\in M$ and $v\in T_pM$,  then we sometimes write
$\dd_v f$ instead of $\dd f(v)$ or $\dd f(p)(v)$. 
We extend $\dd f:TM\to TN$ to a map between the complexified tangent bundles, that is, 
if $v=v_1+iv_2\in T_p^\C M:=T_pM\otimes \C$, then we set 
$\dd f(v):=\dd f(v_1)+i\dd f(v_2)\in T_{f(p)}^\C N$.

An \emph{almost complex structure} on a manifold $M$ is a smooth bundle 
map $J:TM\to TM$ with the property that
$J_p:=J|^{}_{T_pM}:T_pM\to T_pM$ is an $\R$-linear map satisfying $J_p^2=-I$.
We can write
\[
T^\C_p M=T_p^{1,0}M\oplus T_p^{0,1}M
\]
where $T_p^{1,0}M:=\{v-iJ_pv\,:\,v\in T_pM\}$ and 
$T_p^{0,1}M:=\{v+iJ_pv\,:\,v\in T_pM\}$ are the $\pm i$ eigenspaces of the $\C$-linear extension of $J_p$.
The canonical injection $T_p^{}M\into T^\C_pM$ results in an isomorphism 
$T^{}_pM\cong T^\C_pM/T_p^{0,1}M$ as complex vector spaces. 
A smooth map 
$f:M\to N$ between two almost complex manifolds is called \emph{holomorphic} if 
$J^{}_{f(p)}\circ\dd  f=\dd f\circ J^{}_{p}$.
The set of holomorphic maps from  $M$ to $N$ is denoted by $\mathcal O(M,N)$.

A manifold $M$ is called a 
\emph{complex manifold} if it is locally biholomorphic 
to a complex locally convex space, and the transition maps between the local charts of $M$ are holomorphic. 
For the theory of holomorphic maps between complex manifolds we refer the reader to \cite{Herve}.

An almost complex structure on a real manifold $M$ is called \emph{integrable} if it is induced by a complex manifold structure on $M$. The almost complex structure  is called \emph{formally integrable} if the Nijenhuis tensor $N(\vfx,\vfy):=[\vfx,\vfy]+J[J\vfx,\vfy]+J[\vfx,J\vfy]-[J\vfx,J\vfy]$ vanishes for every two locally defined vector fields $\vfx,\vfy$.
If $M$ is finite dimensional, then by the Newlander--Nirenberg 
Theorem the  notions of integrable and formally integrable coincide. This is no longer true for infinite dimensional manifolds \cite{lempert}.

\subsection{Two lemmas on vector fields}

Let $E$ be a locally convex space and $U\sseq E$ be an open set.  
Every smooth vector field $\vfx$ on $U$ results in a derivation of
$\cC^\infty(U,\C)$
defined by  
$\vfx\cdot f(u):=\dd_{\vfx(u)} f$ for every $u\in U$.
The proof of the following lemma is a straightforward induction.
\begin{lem}
\label{UEY1jYijN}
Let $\vfx_1,\ldots,\vfx_n$ be smooth vector fields on an open set $U\sseq E$. 
Then there exists a family of smooth vector fields 
$
\{\vfy_{p,q}\ :\ 1\leq p\leq n-1\text{ and }1\leq q\leq n_p\}
$
on $U$, depending on $\vfx_1,\ldots,\vfx_n$, 
such that  
\[
\vfx_1\cdot (\vfx_2\cdot (\cdots (\vfx_n\cdot f)\cdots ))(u)=
\dd^n f(\vfx_1(u),\ldots,\vfx_n(u))+\sum_{p=1}^{n-1}
\sum_{q=1}^{n_p}\dd^p f(\vfy_{1,q}(u),\ldots,\vfy_{p,q}(u))
\] 
for every 
$f\in \cC^\infty(U,\C)$ and every $u\in U$.
\end{lem}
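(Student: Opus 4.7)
The plan is a straightforward induction on $n$, using the product/chain rule for higher differentials, following the hint in the paper.

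For the base case $n=1$, we have $\vfx_1\cdot f(u)=\dd f(u)(\vfx_1(u))=\dd^1 f(\vfx_1(u))$ by the definition of the action of a vector field on a function, and the correction sum is empty.

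For the inductive step, assume the statement for $n-1$ applied to $\vfx_2,\dots,\vfx_n$, so
\[
(\vfx_2\cdot(\cdots(\vfx_n\cdot f)\cdots))(u)=\dd^{n-1}f(\vfx_2(u),\dots,\vfx_n(u))+\sum_{p=1}^{n-2}\sum_{q=1}^{n'_p}\dd^pf(\vfy'_{1,q}(u),\dots,\vfy'_{p,q}(u))
\]
for smooth vector fields $\vfy'_{p,q}$ depending on $\vfx_2,\dots,\vfx_n$. Now I would apply $\vfx_1$ to both sides. To differentiate $u\mapsto\dd^{n-1}f(\vfx_2(u),\dots,\vfx_n(u))$ in the direction $\vfx_1(u)$, I use the fact that $\dd^{n-1}f$ is continuous multilinear in its last $n-1$ arguments and smooth in the base point, so by the product rule
\[
\vfx_1\cdot\bigl(\dd^{n-1}f(\vfx_2(\cdot),\dots,\vfx_n(\cdot))\bigr)(u)=\dd^n f(\vfx_1(u),\vfx_2(u),\dots,\vfx_n(u))+\sum_{i=2}^n\dd^{n-1}f(\vfx_2(u),\dots,\dd\vfx_i(u)(\vfx_1(u)),\dots,\vfx_n(u)).
\]
The first summand is the desired leading $\dd^n f$ term. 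Each remaining summand has the form $\dd^{n-1}f(\vfz_2(u),\dots,\vfz_n(u))$ where $\vfz_j=\vfx_j$ for $j\ne i$ and $\vfz_i(u):=\dd\vfx_i(u)(\vfx_1(u))$; the latter is a smooth vector field because $\vfx_i$ and $\vfx_1$ are smooth. These are therefore new correction terms of order $p=n-1$. In an analogous way, each existing correction term $\dd^pf(\vfy'_{1,q}(u),\dots,\vfy'_{p,q}(u))$ with $p\le n-2$ produces, upon applying $\vfx_1$, one term of order $p+1\le n-1$ and $p$ terms of order $p$, all of which again have smooth vector-field arguments.

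Re-grouping all these contributions into a single double sum indexed over $1\le p\le n-1$ and a (finite) new range $1\le q\le n_p$ yields the claimed expansion, with the new $\vfy_{p,q}$ being smooth because they are built from $\vfx_1,\dots,\vfx_n$, their derivatives, and the $\vfy'_{p,q}$ by smooth operations. The only real obstacle is purely notational, namely the bookkeeping needed to re-index the sum so that the family $\{\vfy_{p,q}\}$ fits the stated indexing convention; no analytic subtlety appears, since in a locally convex manifold the higher differentials $\dd^k f$ are continuous multilinear and derivatives of smooth vector fields are smooth.
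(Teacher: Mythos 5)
Your proof is correct and is exactly the ``straightforward induction'' that the paper alludes to (the paper omits the details): induct on $n$, differentiate the inductive expansion via the product rule for $u\mapsto \dd^{p}f(u)(\cdots)$, and absorb the lower-order terms, whose arguments such as $u\mapsto \dd\vfx_i(u)(\vfx_1(u))$ are again smooth vector fields, into the correction sum. Nothing further is needed.
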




\begin{lem} 
\label{X1X2dense}
Let $E$ be a complex locally convex space, $U\sseq E$ be open, $\{\vfx_\alpha\,:\,\alpha\in\mathcal I\}$ be a family of smooth vector fields on $U$,
$f\in\mathcal O(U,\C)$, and $p\in U$. Let $d$ be a positive integer such that
\[
\vfx_{\alpha_1}\cdot (\vfx_{\alpha_2}\cdot(\cdots (\vfx_{\alpha_r}\cdot f)\cdots))(p)=0
\text{ \,for every $1\leq r\leq d$ and every $\alpha_1,\ldots, \alpha_r\in\mathcal I$.}
\] 
If $\spn_\C\{\vfx_\alpha(p)\,:\,\alpha\in\mathcal I\}$ is dense in $T_pU\cong E$,
 then $\dd^rf(v_1,\ldots,v_r)=0$ for every 
 $1\leq r\leq d$ and every $v_1,\ldots,v_r\in T_pU$.
\end{lem}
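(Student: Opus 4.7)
The plan is to proceed by induction on $r$, using Lemma~\ref{UEY1jYijN} to pass from iterated vector field derivatives to multilinear derivatives of $f$, and then exploit the crucial point that $\dd^r f(p)$ is continuous \emph{$\C$-multilinear} (because $f$ is holomorphic), so that vanishing on a dense $\C$-subspace forces vanishing everywhere.

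For the base case $r=1$, the hypothesis reads $\dd f(p)(\vfx_\alpha(p)) = \vfx_\alpha \cdot f(p) = 0$ for every $\alpha \in \mathcal{I}$. Since $f$ is holomorphic, $\dd f(p) \colon T_pU \cong E \to \C$ is a continuous $\C$-linear functional; as it vanishes on $\spn_\C\{\vfx_\alpha(p) : \alpha \in \mathcal{I}\}$, which is dense in $E$ by assumption, continuity forces $\dd f(p) = 0$.

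For the inductive step, assume $\dd^{r'}f(p) = 0$ for every $r' < r$, where $r \leq d$. Applying Lemma~\ref{UEY1jYijN} to $\vfx_{\alpha_1},\ldots,\vfx_{\alpha_r}$ and evaluating at $p$, the hypothesis gives
\[
0 = \vfx_{\alpha_1}\cdot(\vfx_{\alpha_2}\cdot(\cdots(\vfx_{\alpha_r}\cdot f)\cdots))(p) = \dd^r f(p)(\vfx_{\alpha_1}(p),\ldots,\vfx_{\alpha_r}(p)) + \sum_{p'=1}^{r-1}\sum_{q=1}^{n_{p'}} \dd^{p'}f(p)(\vfy_{1,q}(p),\ldots,\vfy_{p',q}(p)),
\]
and the sum over $p' < r$ vanishes by the inductive hypothesis. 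Hence $\dd^r f(p)(\vfx_{\alpha_1}(p),\ldots,\vfx_{\alpha_r}(p)) = 0$ for every choice of $\alpha_1,\ldots,\alpha_r \in \mathcal{I}$. Because $f$ is holomorphic, $\dd^r f(p)$ is a continuous symmetric $\C$-multilinear form on $(T_pU)^r$, so $\C$-multilinearity lets us conclude that it vanishes on $(\spn_\C\{\vfx_\alpha(p)\})^r$; applying continuity in each slot successively, density of this $\C$-span in $E$ implies $\dd^r f(p) \equiv 0$.

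The one non-routine point, and the reason the hypothesis on $E$ and $f$ being complex/holomorphic is essential, is the transition from vanishing on $\spn_\C\{\vfx_\alpha(p)\}^r$ to vanishing on all of $(T_pU)^r$: without $\C$-multilinearity one would only get vanishing on the $\R$-span of the $\vfx_\alpha(p)$, which need not be dense in $E$ in any sense that permits extension. Everything else is a direct bookkeeping exercise combining Lemma~\ref{UEY1jYijN} with the induction hypothesis.
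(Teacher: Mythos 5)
Your proof is correct and follows essentially the same route as the paper: induction using Lemma~\ref{UEY1jYijN} to reduce the iterated vector-field derivatives to $\dd^r f(p)$ evaluated at the $\vfx_{\alpha_i}(p)$ plus lower-order terms killed by the inductive hypothesis, then continuity and $\C$-multilinearity of $\dd^r f$ to pass from the dense $\C$-span to all of $T_pU$. Your closing remark correctly identifies the role of holomorphy, which is also the point implicit in the paper's argument.
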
 
\begin{proof}

If $d=1$ then $\dd f(\vfx_\alpha(p))=\vfx_\alpha\cdot f(p)=0$ for every $\alpha\in\mathcal I$. Since the map 
$v\mapsto \dd f(v)$ is $\C$-linear and continuous, 
it follows that $\dd f(v)=0$ for every $v\in T_pU$.
The case $d>1$ is proved by induction on $d$. If $1\leq r\leq d$, then Lemma \ref{UEY1jYijN} implies that  the difference 
\[
\vfx_{\alpha_1}
\cdot(\cdots(\vfx_{\alpha_r}\cdot f)\cdots)(p)
-\dd^r f(\vfx_{\alpha_1}(p),\ldots,\vfx_{\alpha_r}(p))
\]
can be written as a sum of directional derivatives of the form 
$\dd^jf(y_1,\ldots,y_j)=0$ where $1\leq j\leq r-1$ and $y_1,\ldots,y_j\in T_pU$. By the induction hypothesis, all of the latter terms vanish, and therefore 
$\dd^r f(\vfx_{\alpha_1}(p),\ldots,\vfx_{\alpha_r}(p))=0$ for every $\alpha_1,\ldots,\alpha_r\in\mathcal I$. Since 
$\dd^r f$ is $\C$-multilinear and continuous,  it follows that
$\dd^rf(v_1,\ldots,v_r)=0$ for every 
$v_1,\ldots,v_r\in T_pU$.
\end{proof}

\subsection{Infinite dimensional Lie groups and representations }
We assume that all Lie groups are modeled on Fr\'echet spaces and have a smooth exponential map. The exponential map $\g g\to G$, where $\g g:=\Lie(G)$, will be denoted by $x\mapsto e^x$. 
Lie group homomorphisms are always assumed to be smooth.
The universal enveloping algebra of $\gC:=\g g\otimes_\R\C$ will be denoted by 
$\Uu(\gC)$.
By a \emph{complex Lie group} we mean a Lie group that is also a complex manifold with holomorphic multiplication and inversion maps. 
If $G_1$ and $G_2$ are complex Lie groups, by a \emph{homomorphism of complex Lie groups} $f:G_1\to G_2$, we mean a holomorphic map which is also a group homomorphism.

For a Lie group $G$ we define $\ell_g:G\to G$ and $\varrho_g:G\to G$ by $\ell_g(h):=gh$ and $\varrho_g(h):=hg$. If $v\in T^{}_{h}G$ for some 
$h \in G$, then for simplicity we write $g\cdot v$ and $v\cdot g$ instead of $\dd_v^{}\ell_g$ and $\dd_v^{}\varrho_g$.

Let $G$ be a Lie group and $M$ be a
manifold. If $f:U\to M$ is a smooth map, where $U\sseq G$ is open, then we define
\[
L_xf(g):=\dd f(g\cdot x)=\dd_x (f\circ\ell_g)\,\text{ for every }g\in G\text{ and every }x\in \g g:=\Lie(G).
\]
If $M$ has an almost complex structure $J:TM\to TM$, then we define
\begin{equation}
\label{Lx+iyd}
L_{x+iy}f(g):=L_xf(g)+J_{f(g)}^{}L_yf(g)\text{\ \,for every }x,y\in\g g.
\end{equation}

Let $E$ be a complex locally convex space. We denote the vector space  
of continuous linear operators on $E$ 
by $\mathrm B(E)$. 
Let $\mathrm{GL}(E)$ denote the group of 
invertible continuous linear operators on $E$.
By a \emph{strongly continuous} representation of  a Lie group $G$ on $E$ we mean a group homomorphism $\pi:G\to \mathrm{GL}(E)$ 
such that for every $v\in E$, the orbit map $\pi^v:G\to E$ defined by $\pi^v(g):=\pi(g)v$ is continuous.
Given a strongly continuous representation 
$\pi:G\to \mathrm{GL}(E)$,
 we define
\[
\dd\pi(x)v:=\lim_{t\to 0}\frac{1}{t}\left(\pi(e^{tx})v-v\right)
\text{
\ for every $x\in\g g:=\Lie(G)$ and every $v\in E$,
}
\]
if the limit exists.
The group of unitary operators of a complex Hilbert space $\ccH$ will be denoted by $\mathrm U(\ccH)$. 
A strongly continuous representation $\pi:G\to \mathrm U(\ccH)$  
is called a \emph{unitary representation}.
A unitary representation $(\pi,\ccH)$ is called \emph{smooth} if the set 
$
\ccH^\infty:=\{v\in\ccH\ :\ \pi^v\in \cC^\infty(G,\ccH)\}
$
is a dense subspace of $\ccH$. 
We equip $\mathrm B(\ccH)$, $\mathrm{GL}(\ccH)$, and $\mathrm U(\ccH)$ with the operator norm topology. 

\section{Invariant complex structures and unitary representations}
\label{sec-cplxstr}

Let $G$ be a connected Lie group and  $H\sseq G$ be a subgroup which is also a Lie group. 
We assume that 
$H$ is a \emph{split Lie subgroup}, that is, there exists an open subset $\Sigma$ of some locally convex vector space $E$ and a smooth map $\sigma:\Sigma\to G$ such that the map 
\begin{equation}
\label{SHGx)}
\Sigma\times H\to G\ ,\ (z,h)\mapsto \sigma(z)h
\end{equation}
is a diffeomorphism onto an open $\yek$-neighborhood of $G$. 
It follows that $G$ is a smooth principal $H$-bundle over $G/H$ for a suitable manifold structure on $G/H$. 
(In fact the standard argument from the finite dimensional case carries over.) 
Set $M:=G/H$, $\g g:=\Lie(G)$, and $\g h:=\Lie(H)$. 
Let $\pgh:G\to M$ be the canonical projection, and  
set $m_\circ:=\pgh(\yek)$.  

Let $S$ be another connected Lie group and set $\g s:=\Lie(S)$. Let $\gamma: H \to S$ be a Lie group homomorphism, and $\bS := G \times_\gamma S$ denote the associated bundle  corresponding to the smooth action 
\[
H\times S\to S\ ,\ (h,s)\mapsto \gamma(h)s\text{ for every }(h,s)\in H\times S.
\]
Thus, $\bS$ is the quotient of $G\times S$ by the right action of $H$
defined by 
\begin{equation}
\label{gs.h}
(g,s)\cdot h := (gh, \gamma(h^{-1})s) 
\text{ for every }(g,s)\in G\times S\text{ and every }h\in H.
\end{equation}
The associated bundle $\bS$ 
is a $G$-homogeneous principal $S$-bundle over $M$. We will denote the elements of $\bS$ by equivalence classes $[g,s]$ where $g\in G$ and $s\in S$. 
There is a transitive left action of 
$G\times S$ on $\bS$ given by 
\[
(g,s)\cdot [g',s']:=[gg',s's^{-1}]\text{\ \ for every }(g,s)\in G\times S
\text{ and every }[g',s']\in \bS.
\]
Let $\Gamma(\gamma):=\{(h,\gamma(h))\, :\, h\in H\}$ denote the 
graph of $\gamma$. 
The group $\Gamma(\gamma)$ is a split Lie subgroup of $G\times S$ because the map
\[
\Sigma\times S\times H\to G\times S\ ,\ 
(z,s,h)\mapsto (\sigma(z),\yek)(\yek,s)(h,\gamma(h))=(\sigma(z)h,s\gamma(h))
\]
is a diffeomorphism onto an open subset of $G\times S$.
Therefore $G\times S$ is a smooth principal 
$\Gamma(\gamma)$-bundle over 
$(G\times S)/\Gamma(\gamma)$.
The Lie algebra of $\Gamma(\gamma)$ is
$\Gamma(\dd \gamma):=
\{(x,\dd\gamma(x))\, :\, x\in \g h\}\sseq \g g\oplus\g s$ where $\dd\gamma:\g h\to\g s$ is the differential of $\gamma$ at $\yek\in H$. The map
\begin{equation}
\label{PsiSGG}
\Psi:\bS\to (G\times S)/\Gamma(\gamma)\ ,\ [g,s]\mapsto (g,s^{-1})\Gamma(\gamma)
\end{equation}
is $G\times S$-equivariant, and using local charts one can prove that $\Psi$ is a diffeomorphism. 
 We set $\widetilde m_\circ:=\Psi([\yek,\yek])$.
In what follows, it will be  easier to work with $(G\times S)/\Gamma(\gamma)$ than with $\bS$.

\subsection{Formally integrable invariant almost complex structures on $\bS$}

From now on we assume that $S$ is a complex Lie group, and $M$ is a complex manifold with a  
$G$-invariant complex structure that corresponds to a closed 
Lie subalgebra 
$\g q \sseq \gC$ satisfying 
\begin{equation}
  \label{eq:c-str}
\Ad(H)\g q = \g q, \quad \g q \cap \oline{\g q} = \hC, \quad 
\text{ and } \quad \g q + \oline{\g q} = \gC.
\end{equation}
The latter correspondence is given by the canonical isomorphism 
$T_{m_\circ}^{}(M) \cong \g g/\g h \cong \gC/\oline{\g q}$. 

\begin{rmk}
In fact every $G$-invariant formally integrable almost complex structure on $M$ corresponds to a Lie subalgebra $\g q\sseq\gC$ satisfying \eqref{eq:c-str}.
\end{rmk}

\begin{prp}
\label{galcpx}
The formally integrable $G$-invariant almost complex structures on $\bS$ for which the projection map $\pgh_\bS:\bS\to M$ and the right action of $S$ on $\bS$ are holomorphic are in one to one correspondence with continuous Lie algebra homomorphisms $\beta:\oline{\g q}\to \g s$ which satisfy $\beta\big|_{\g h}=\dd\gamma$ and 
\begin{equation}
\label{beAdhxg}
\beta(\Ad(h)x)=
\Ad(\gamma(h))
\beta(x)
\text{\ \ for every }x\in\oline{\g q}\text{ and every }
h\in H.
\end{equation}

\label{prp-cplxGgS}
\end{prp}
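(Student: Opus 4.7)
My plan is to use the $G\times S$-equivariant diffeomorphism $\Psi$ of~\eqref{PsiSGG} to translate the problem into a description of almost complex structures on $(G\times S)/\Gamma(\gamma)$. Under $\Psi$, the $G$-invariance on $\bS$ together with the holomorphicity of the right $S$-action corresponds to $G\times S$-invariance of the corresponding structure on $(G\times S)/\Gamma(\gamma)$. By the remark preceding the proposition, formally integrable $G\times S$-invariant almost complex structures on $(G\times S)/\Gamma(\gamma)$ correspond bijectively to closed complex Lie subalgebras $\widetilde{\g q}\sseq\gC\oplus\g s_\C$ satisfying the analog of~\eqref{eq:c-str} relative to the split Lie subgroup $\Gamma(\gamma)\sseq G\times S$.

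Next, using the decomposition $\g s_\C=\g s\oplus\oline{\g s}$ induced by the complex structure of $S$, I would translate the two extra hypotheses. Holomorphicity of $\pgh_\bS$ is equivalent to $\pi_1(\widetilde{\g q})\sseq\oline{\g q}$, where $\pi_1:\gC\oplus\g s_\C\to\gC$ is the projection, and holomorphicity of the right $S$-action is equivalent to $0\oplus\oline{\g s}\sseq\widetilde{\g q}$, reflecting that $T^{0,1}_\yek S=\oline{\g s}$. Combining these with $\Gamma(\dd\gamma)_\C\sseq\widetilde{\g q}$ and $\widetilde{\g q}+\oline{\widetilde{\g q}}=\gC\oplus\g s_\C$ quickly forces $\pi_1(\widetilde{\g q})=\oline{\g q}$ and $\ker(\pi_1|_{\widetilde{\g q}})=0\oplus\oline{\g s}$, so $\widetilde{\g q}$ fits into a short exact sequence
\[
0\to 0\oplus\oline{\g s}\to\widetilde{\g q}\xrightarrow{\pi_1}\oline{\g q}\to 0,
\]
and projecting the $\g s_\C$-component of each element of $\widetilde{\g q}$ onto $\g s$ along $\oline{\g s}$ yields a continuous $\C$-linear map $\beta:\oline{\g q}\to\g s$ such that $\widetilde{\g q}=\{(x,\beta(x)+\bar w)\,:\,x\in\oline{\g q},\,\bar w\in\oline{\g s}\}$. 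Conversely any such $\beta$ reconstructs a $\widetilde{\g q}$ of this form.

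It remains to translate the remaining defining conditions on $\widetilde{\g q}$ into conditions on $\beta$. The Lie subalgebra property becomes that $\beta$ is a Lie algebra homomorphism, using crucially that $[\g s,\oline{\g s}]=0$ in $\g s_\C$. The inclusion $\Gamma(\dd\gamma)_\C\sseq\widetilde{\g q}$ is equivalent to $\beta|_{\g h}=\dd\gamma$ (with $\g s_\R$ and $\g s$ identified as sets), and then $\widetilde{\g q}\cap\oline{\widetilde{\g q}}=\Gamma(\dd\gamma)_\C$ follows automatically via the reality identity $\oline{\dd\gamma_\C(x)}=\dd\gamma_\C(\oline x)$. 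Finally $\Ad(\Gamma(\gamma))\widetilde{\g q}=\widetilde{\g q}$ translates to exactly~\eqref{beAdhxg}, using that $\Ad(\gamma(h))$ preserves the splitting $\g s_\C=\g s\oplus\oline{\g s}$ because $S$ is a complex Lie group. The main subtlety I anticipate is the careful bookkeeping of the two distinct complex structures present on $\g s_\C$---the scalar one from complexification and the one inherited from $\g s$---and tracking $\dd\gamma_\C$ through the decomposition $\g s_\C=\g s\oplus\oline{\g s}$; once this is handled cleanly the verifications become routine algebra.
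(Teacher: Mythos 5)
Your proposal is correct and follows essentially the same route as the paper: transfer the structure to $(G\times S)/\Gamma(\gamma)$ via $\Psi$, identify it with an invariant subalgebra of $\gC\oplus\sC$, split off $\g s_\C^-$ using holomorphy of the $S$-action, and read off $\beta$ as the graph map over $\oline{\g q}$, translating the remaining conditions into $\beta|_{\g h}=\dd\gamma$ and \eqref{beAdhxg}. The only point you assert rather than argue is the continuity of $\beta$, which the paper obtains from the Closed Graph Theorem for Fr\'echet spaces applied to the closed graph $\Gamma(\beta)$.
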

\begin{proof}
Suppose that $\bS$ is equipped with an almost complex structure that satisfies the properties of the proposition. 
By the diffeomorphism  
\eqref{PsiSGG},  the almost complex structure of $\bS$ can be transferred to a
$G\times S$-invariant 
formally integrable
almost complex structure on 
$(G\times S)/\Gamma(\gamma)$ with the extra property that the 
projection map $(G\times S)/\Gamma(\gamma)\to M$ and 
the action map 
\begin{equation}
\label{actionSGSGa}
S\times ((G\times S)/\Gamma(\gamma))\to (G\times S)/\Gamma(\gamma)
\ ,\ 
(s',(g,s)\Gamma(\gamma))\mapsto (g,s's)\Gamma(\gamma)
\end{equation}
are holomorphic. 
Let $\g p\sseq \gC\oplus\sC$ denote the Lie subalgebra 
corresponding to the almost complex structure of $(G\times S)/\Gamma(\gamma)$ by the canonical identification 
$T_{\widetilde m_\circ}^{}((G\times S)/\Gamma(\gamma)) \cong (\gC \oplus \sC)/\oline{\g p}$.
Thus, $\g p$ satisfies
\begin{equation}
  \label{eq:c-str2}
\Ad(\Gamma(\gamma))\g p\sseq \g p,\quad
\g p \cap \oline{\g p} = \Gamma(\dd\gamma)_\C^{}, 
\quad \mbox{ and } \quad 
\g p + \oline{\g p} = \gC \oplus \sC.
\end{equation}
Let $\sC = \g s_\C^+\oplus \g s_\C^-$, where  $\g s_\C^+ :=T_\yek^{1,0}S$ and $\g s_\C^- := T_\yek^{0,1}S$.
Holomorphy of the map \eqref{actionSGSGa} implies that 
$\g s_\C^+\sseq \g p$
and $\g s_\C^-\sseq \oline{\g p}$. But if $x\in \oline{\g p}\cap \g s_\C^{+}$ then $x\in\g p\cap\oline{\g p}\cap\sC=\{0\}$, hence  
$
\oline{\g p} \cap \sC = \g s_\C^-
$. 

Let $\pr_{\gC}:\gC\oplus\sC\to \gC$ be the projection onto the first component. Holomorphy of the projection map $(G\times S)/\Gamma(\gamma)\to M$ implies that $\pr_{\gC}(\oline{\g p})=\oline{\g q}$.
Writing
$\oline{\g p}=\oline{\g p}_\circ\oplus\g s_\C^{-}$,
 where 
$\oline{\g p}_\circ:=\oline{\g p}\cap (\gC\oplus \g s_\C^+)$,
we obtain that 
$
\pr_{\gC}(\oline{\g p}_\circ)=\oline{\g q}
$
and
$
\oline{\g p}_\circ\cap\sC=\{0\}.
$
Thus there exists a $\C$-linear map
$\beta^+:\oline{\g q}\to\g s_\C^+$ such that
$
\oline{\g p}_\circ=\Gamma(\beta^+):=\{(x,\beta^+(x))\,:\,x\in \oline{\g q}\}
$.
Since $\oline{\g p}_\circ$ is a Lie subalgebra of $\gC\oplus\sC$, 
the map $\beta^+$ is a Lie algebra homomorphism. 
Moreover, the Closed Graph Theorem for Fr\'echet spaces \cite[Sec. 17]{Tr67} implies that $\beta^+$ is
continuous.

Let $J_\yek:\g s\to\g s$ be the complex structure of $\g s$ and
set $\beta:=j_{\yek}^{-1}\circ \beta^+:\oline{\g q}\to\g s$, where $j_{\yek}^{}:\g s\to\g s_\C^+$ is the canonical identification $j^{}_{\yek}(x):=\frac12 (x-iJ_\yek x)$.
Since $j_{\yek}^{}$ is a Lie algebra homomorphism, so is $\beta$. By the above discussion $\oline{\g p}=\Gamma(\beta^+)\oplus\g s_\C^-$.
Setting 
$\Gamma(\beta):=\{(x,\beta(x))\,:\,x\in\oline{\g q}\}$, we obtain
\begin{equation}
\label{pGbesC}
\oline{\g p}:=\Gamma(\beta)\oplus\g s_\C^-.
\end{equation}
Therefore the relation $\Gamma(\dd\gamma)\sseq \oline{\g p}$ implies that $\beta\big|_{\g h}=\dd\gamma$, and \eqref{beAdhxg} follows from  $\Ad(\Gamma(\gamma))\oline{\g p}=\oline{\g p}$.

Conversely, suppose we are given a $\beta:\oline{\g q}\to\g s$ which satisfies the conditions of the proposition. We define $\oline{\g p}$ as in 
\eqref{pGbesC}. It is straightforward to check that $\oline{\g p}$ is a Lie subalgebra of $\gC\oplus\sC$ which satisfies $\g p+\oline{\g p}=\gC\oplus\sC $.
The relation $\Ad(\Gamma(\gamma)){\g p}\sseq{\g p}$ follows from \eqref{beAdhxg}. 
Next we prove that
$\Gamma(\dd\gamma)^{}_\C=\g p\cap  \oline{\g p}$.
The relation $\beta\big|_{\g h}=\dd\gamma$ implies that 
$\Gamma(\dd\gamma)^{}_\C\sseq \g p\cap \oline{\g p}$, and 
from \eqref{pGbesC} it follows that 
\[
\g p\cap \oline{\g p}\sseq\hC\oplus\sC,
\quad 
\oline{\g p}\cap \sC=\g s_\C^{-},\quad \text{ and } 
\quad
\g p\cap \sC=\g s_\C^+.
\]
Therefore $\g p\cap \oline{\g p}\cap\sC=\g s_\C^-\cap\g s_\C^+=\{0\}$, which implies that 
$\g p\cap \oline{\g p}\sseq \Gamma(\dd\gamma)_\C^{}$. 

We now identify $T_{\widetilde m_\circ}^{}((G\times S)/\Gamma(\gamma))$ 
with $(\gC\oplus\sC)/\oline{\g p}$ to obtain  a formally integrable $G\times S$-invariant 
almost complex structure on $(G\times S)/\Gamma(\gamma)$. By the invariance of almost complex structures, holomorphy of the projection $(G\times S)/\Gamma(\gamma)\to M$ is equivalent to $\pr_{\gC}^{}(\oline{\g p})=\oline{\g q}$, which is a consequence of \eqref{pGbesC}. Similarly, 
holomorphy of the action of $S$ on $(G\times S)/\Gamma(\gamma)$ follows from $\g s_\C^-\sseq \oline{\g p}$.
\end{proof}

\subsection{Holomorphic principal $S$-bundle structures on $\bS$}
\label{sec-hpSb}

Let $\beta:\oline{\g q}\to\g s$ be a continuous Lie algebra homomorphism which satisfies the conditions of Proposition \ref{galcpx}.
Suppose that $\bS$ is equipped with the formally integrable almost complex structure that corresponds to $\beta$.
Our next goal is to determine if $\bS$ 
is indeed a  holomorphic principal 
$S$-bundle. This is achieved in Theorem 
\ref{thm-VGyekL}.

Let $U\sseq M$ be an open set and 
$\bff:U\to \bS$ be a smooth section. Then we can write $\bff$ as  
\[
\bff(gH)=[g,f(g)]\text{\ \ for every }g\in\pgh^{-1}(U), 
\]
where $\pgh:G\to M$ is the canonical projection, 
and $f:\pgh^{-1}(U)\to S$ is a smooth map satisfying
$
f(gh)=\gamma(h)^{-1}f(g)
$  for  $h\in H$ and  
$
g\in \pgh^{-1}(U)
$.
\begin{lem}
\label{lem-schol}
The section $\bff$ is holomorphic if and only if 
\begin{equation}
\label{Lxfgrfg}
L_xf(g)=-\beta(x)\cdot f(g)
\end{equation}
for every $g\in \pgh^{-1}(U)$ and every $x\in\oline{\g q}$.
\end{lem}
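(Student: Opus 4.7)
The plan is to pass via the diffeomorphism $\Psi$ from $\bS$ to $(G\times S)/\Gamma(\gamma)$, where the invariant almost complex structure is directly encoded by the subalgebra $\oline{\g p}=\Gamma(\beta)\oplus\g s_\C^{-}$ constructed in the proof of Proposition \ref{galcpx}. The composition $\Psi\circ\bff:U\to (G\times S)/\Gamma(\gamma)$ is simply the map $gH\mapsto (g,f(g)^{-1})\Gamma(\gamma)$, and since $\Psi$ is a biholomorphism by the very construction of the complex structures, $\bff$ is holomorphic if and only if the $\C$-linear extension of $\dd(\Psi\circ\bff)$ carries antiholomorphic tangent vectors at each $gH$ into antiholomorphic tangent vectors at $(g,f(g)^{-1})\Gamma(\gamma)$.

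First I would use the $G\times S$-invariance of the almost complex structure on $(G\times S)/\Gamma(\gamma)$ to move the base point back to $\widetilde m_\circ$: left multiplication by $(g^{-1},f(g))$ is holomorphic and sends $(g,f(g)^{-1})\Gamma(\gamma)$ to $\widetilde m_\circ$, where antiholomorphic tangent vectors are parametrized by $\oline{\g p}/\Gamma(\dd\gamma)_\C$. Next, for $x\in\g g$, the tangent vector $\dd\pgh(g\cdot x)$ at $gH$, represented by $t\mapsto ge^{tx}H$, is carried by $\Psi\circ\bff$ to the class of $t\mapsto (ge^{tx},f(ge^{tx})^{-1})\Gamma(\gamma)$; translating this curve by $(g^{-1},f(g))$ yields $t\mapsto (e^{tx},f(g)f(ge^{tx})^{-1})\Gamma(\gamma)$, and differentiating at $t=0$ via the product rule gives the class of $(x,-L_xf(g)\cdot f(g)^{-1})\in\g g\oplus\g s$ modulo $\Gamma(\dd\gamma)$, where right translation by $f(g)^{-1}$ is used to identify the tangent vector at $f(g)$ with an element of $\g s$.

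Extending this formula $\C$-linearly in $x=x_1+ix_2\in\oline{\g q}$ ($x_j\in\g g$), the image of the corresponding antiholomorphic vector at $gH$ is represented in $(\gC\oplus\sC)/\Gamma(\dd\gamma)_\C$ by the class of $(x,w_x)$ with $w_x=-(L_{x_1}f(g)+iL_{x_2}f(g))\cdot f(g)^{-1}\in\sC$. This class lies in $\oline{\g p}/\Gamma(\dd\gamma)_\C$ iff $w_x\equiv\beta(x)\pmod{\g s_\C^{-}}$, the first component $x$ already belonging to $\oline{\g q}$. Applying the isomorphism $j_\yek:\g s\to\g s_\C^{+}$, $y\mapsto \tfrac12(y-iJ_\yek y)$, from the proof of Proposition \ref{galcpx}, and invoking that right translation on $S$ is holomorphic (so that $j_\yek$ commutes with right translation by $f(g)^{-1}$), the $\g s_\C^{+}$-part of $w_x$ works out to $-j_\yek(L_xf(g)\cdot f(g)^{-1})$, while that of $\beta(x)$ is $j_\yek(\beta(x))$. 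By injectivity of $j_\yek$, the congruence collapses to $L_xf(g)\cdot f(g)^{-1}=-\beta(x)$ in $\g s$, i.e.\ $L_xf(g)=-\beta(x)\cdot f(g)$, for every $x\in\oline{\g q}$ and every $g\in\pgh^{-1}(U)$. The argument is reversible, which gives the equivalence.

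The main technical obstacle is to keep the two complex structures on $\g s$ cleanly separated: the abstract complexification $\sC=\g s\otimes_\R\C$ used to describe $\oline{\g p}$ versus the intrinsic complex structure $J$ on $\g s$ that enters the paper's definition \eqref{Lx+iyd} of $L_xf(g)$ for complex arguments. The bridge between them is precisely the $J$-linearity of right translation on $S$, which is what lets one recognize the $\g s_\C^{+}$-projection of the abstract expression $L_{x_1}f(g)+iL_{x_2}f(g)$ as $j_{f(g)}(L_xf(g))$ and thus read off the intrinsic equation $L_xf(g)=-\beta(x)\cdot f(g)$.
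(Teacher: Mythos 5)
Your argument is correct and follows essentially the same route as the paper's proof: both pass through $\Psi$ to $(G\times S)/\Gamma(\gamma)$, use the description $\oline{\g p}=\Gamma(\beta)\oplus\g s_\C^-$, and reduce holomorphy of $\bff$ to the condition that the (translated) complexified differential sends $\oline{\g q}$ into $\oline{\g p}$ modulo $\Gamma(\dd\gamma)_\C$. The only differences are cosmetic bookkeeping --- you translate to the base point and extract the equation by projecting onto $\g s_\C^+$ via $j_\yek$, whereas the paper works with $\varphi=f^{-1}$, writes $s^-=y+iJ_\yek y$, and separates real and imaginary parts --- and both hinge on the same holomorphy of right translations on $S$.
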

\begin{proof} 
Let $\Psi:\bS\to (G\times S)/\Gamma(\gamma)$ be defined as in \eqref{PsiSGG}.
The section
 $\widetilde f$ is holomorphic if and only if  
\begin{equation}
\label{ddpcisile}
\dd(\Psi\circ\bff\circ\ell_g)(\oline{\g q}/\hC)
\sseq 
\dd\ell_{(g,f(g)^{-1})}\left(\oline{\g p}/\Gamma(\dd\gamma)_\C^{}\right)
\text{ for every $g\in \pgh^{-1}(U)$}.
\end{equation}
Let $\varphi:\pgh^{-1}(U)\to S$ be defined by $\varphi(g):=f(g)^{-1}$.
Then \eqref{ddpcisile} is equivalent to 
\begin{equation}
\label{gcxdphixg}
(g\cdot x,\dd\varphi(g\cdot x))\in\dd\ell_{(g,\varphi(g))}(\oline{\g p})\text{ for every }g\in \pgh^{-1}(U)
\text{ and every }x\in\oline{\g q}.
 \end{equation}
From \eqref{pGbesC} and \eqref{gcxdphixg} it follows that if $\widetilde f$ is holomorphic, then 
for every $g\in \pgh^{-1}(U)$ and every $x\in\oline{\g q}$, there exists an 
$s^-\in\g s_\C^-$ such that 
$
\dd\varphi(g\cdot x)=
\varphi(g)\cdot\big(\beta(x)+s^-\big)
$.
The latter equality can be written as
$
\dd f(g\cdot x)=-\big(\beta(x)+s^-\big)\cdot f(g)
$.
Writing $x=x_1+ix_2$, where $x_1,x_2\in \g g$, and $s^-=y+iJ_\yek y$, where $y\in\g s$ and $J_\yek:\g s\to \g s$ is the complex structure of $\g s$, we obtain
\[
\dd f(g\cdot x_1)+i\dd f(g\cdot x_2)=-\beta(x)\cdot f(g)-y\cdot f(g)-iJ_\yek y\cdot f(g).
\]
It follows that $\dd f(g\cdot x_1)=-\beta(x)\cdot f(g)-y\cdot f(g)$ and $\dd f(g\cdot x_2)=-J_\yek y\cdot f(g)=-J_{f(g)}^{}(y\cdot f(g))$, hence
$
L_xf(g)=\dd f(g\cdot x_1)+J_{f(g)}^{}\dd f(g\cdot x_2)=-\beta(x)\cdot f(g)
$.
The proof of the converse is similar.
\end{proof}


\begin{thm}
\label{thm-VGyekL}
Suppose that $\bS$ is equipped with the almost complex structure that corresponds to  
$\beta:\oline{\g q}\to\g s$ according to {\rm Proposition~\ref{galcpx}}. 
Then the following two statements are equivalent.
\begin{itemize}
\item[{\upshape (i)}] $\bS$ is a holomorphic principal $S$-bundle over $M$.
\item[{\upshape (ii)}] There exists an open $\yek$-neighborhood $V\sseq G$ and a smooth map 
$f:V\to S$  satisfying 
\[
L_xf(g)=-\beta(x)\cdot f(g)\text{ \,for every $x\in \oline{\g q}$ and every $g\in V$.}
\]
\end{itemize}
Moreover, when {\upshape (ii)} holds, the complex manifold structure of $\bS$ is uniquely determined by the holomorphic section of $\bS$  given by $gH\mapsto [g,f(g)]$ in some open neighborhood of the base point $m_\circ\in M$.
\end{thm}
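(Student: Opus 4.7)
The strategy is to view condition (ii) as exactly what is needed to upgrade the formally integrable almost complex structure on $\bS$ (supplied by Proposition \ref{galcpx}) to a genuine complex manifold structure, via a local holomorphic section at $m_\circ$ that is then spread over $\bS$ by the $G$-action. The direction (i) $\Rightarrow$ (ii) is almost immediate from Lemma \ref{lem-schol}: a holomorphic principal $S$-bundle admits a local holomorphic section $\widetilde f:U\to\bS$ on a neighborhood $U$ of $m_\circ$; writing $\widetilde f(gH)=[g,f(g)]$ with $f:V:=\pgh^{-1}(U)\to S$ smooth, Lemma \ref{lem-schol} translates holomorphy of $\widetilde f$ into the stated identity $L_xf=-\beta(x)\cdot f$ on $V$.

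For (ii) $\Rightarrow$ (i), my first step is to promote $f$ to an $H$-equivariant map $f(gh)=\gamma(h)^{-1}f(g)$ on a (possibly smaller) neighborhood. Since $\g h\sseq\oline{\g q}$ and $\beta|_{\g h}=\dd\gamma$, the ODE in (ii) restricted to $x\in\g h$ reads $\tfrac{d}{dt}f(ge^{tx})=-\dd\gamma(x)\cdot f(ge^{tx})$ along each $H$-orbit, which integrates to $f(ge^{tx})=\gamma(e^{tx})^{-1}f(g)$ for small $t$; combined with the local product splitting $G\cong\Sigma\cdot H$ near $\yek$ this yields full $H$-equivariance on a neighborhood of $\yek$. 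Hence $\widetilde f(gH):=[g,f(g)]$ is a well-defined smooth local section of $\bS$, and Lemma \ref{lem-schol} certifies it as holomorphic with respect to the formally integrable almost complex structure attached to $\beta$.

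The second step is to globalize. Because $G$ acts on $\bS$ by maps preserving the almost complex structure, the translates $\widetilde f_g(m):=g\cdot\widetilde f(g^{-1}\cdot m)$ are local holomorphic sections whose domains $g\cdot U$ cover $M$. Each of these yields an $S$-equivariant diffeomorphism $\tau_g:(g\cdot U)\times S\to\pgh_\bS^{-1}(g\cdot U)$, $(m,s)\mapsto\widetilde f_g(m)\cdot s$, which intertwines the formally integrable almost complex structure on $\bS$ with the product complex structure on $(g\cdot U)\times S$; here one combines Lemma \ref{lem-schol} (for the section factor) with the holomorphy of the right $S$-action from Proposition \ref{galcpx} (for the group factor). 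Transporting the honest complex charts on $(g\cdot U)\times S$ through $\tau_g$ produces holomorphic charts on $\bS$, and any two such charts are related by a diffeomorphism intertwining the same almost complex structure on both sides, hence a biholomorphism. Thus $\bS$ acquires a complex manifold structure turning $\bS\to M$ into a holomorphic principal $S$-bundle. The uniqueness assertion is now nearly formal: Proposition \ref{galcpx} already pins down the almost complex structure from $\beta$, and any complex manifold structure realizing it must share the atlas generated by the local section $gH\mapsto[g,f(g)]$ and its $G$-translates, which determines it uniquely.

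The main obstacle I expect is precisely the passage from formal to genuine integrability: in infinite dimensions Newlander--Nirenberg fails, so one cannot simply invoke an abstract integrability theorem and must extract the honest complex charts directly from hypothesis (ii). Concretely, the delicate point is to verify that $\tau_g$ is a biholomorphism onto its image in $\bS$, which is only equipped a priori with a formally integrable almost complex structure; this is where the holomorphy of the right $S$-action and of $\widetilde f_g$ (through Lemma \ref{lem-schol}) must be combined carefully to see that the product complex structure and the transplanted almost complex structure genuinely agree, rather than merely agreeing on tangent spaces.
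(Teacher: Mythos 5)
Your proposal is correct and follows essentially the same route as the paper: (i)$\Rightarrow$(ii) via Lemma \ref{lem-schol}, and (ii)$\Rightarrow$(i) by deriving $H$-equivariance of $f$ from $\beta\big|_{\g h}=\dd\gamma$ (differentiating along curves in $H$), building the local holomorphic section, and spreading it by left $G$-translates to obtain holomorphic trivializations/charts. The only step you gloss is checking that the $H$-equivariant extension of $f$ to the full saturation $\pgh^{-1}(U)$ still satisfies $L_x\breve f=-\beta(x)\cdot\breve f$ for all $x\in\oline{\g q}$ (the paper's short computation using $\beta(\Ad(h)x)=\Ad(\gamma(h))\beta(x)$), which is needed before Lemma \ref{lem-schol} can be applied to the section.
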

\begin{proof}
The implication (i)$\RA$(ii) is an immediate consequence of Lemma \ref{lem-schol}. Next we prove (ii)$\RA$(i).
Recall that $H$ is a split Lie subgroup of $G$, that is, the map $\Sigma\times H\to G$ given in 
\eqref{SHGx)} is a diffeomorphism. After shrinking $V$ we can assume that there exists a connected open subset $\Sigma'\sseq \Sigma$  and a connected open
$\yek$-neighborhood $U_H^{}$ in $H$ such that the map
\[
\Sigma'\times U_H^{}\to G\ ,\ 
(z,h)\mapsto \sigma(z)h
\] 
is a diffeomorphism onto $V$.
Let $g\in \sigma(\Sigma')$ and $\alpha:[0,1]\to H$ be a smooth curve such that $\alpha(0)=\yek$.  
For every $t\in [0,1]$ we have $\alpha'(t)=\alpha(t)\cdot\xi(t)$, 
where $\xi:[0,1]\to \g h$ is a 
smooth map. Then
\begin{align*}
\frac{d}{dt}\Big(\gamma(\alpha(t))f(g\alpha(t))\Big)\Big|_{t=t_\circ} 
&= \gamma(\alpha(t_\circ))\cdot
L_{\xi(t_\circ)}^{}f(g\alpha(t_\circ))
+
(\gamma\circ\alpha)'(t_\circ)\cdot f(g\alpha(t_\circ))\\
&=-\gamma(\alpha(t_\circ))
\cdot
\beta(\xi(t_\circ))
\cdot f(g\alpha(t_\circ))
+
\gamma(\alpha(t_\circ))\cdot
\beta(\xi(t_\circ))\cdot f(g\alpha(t_\circ))=0.
\end{align*}
Therefore $f(gh)=\gamma(h)^{-1}f(g)$ for every $g\in\sigma(\Sigma')$ and every $h\in U_H^{}$. As a result, we obtain an extension of $f$ to a smooth map
$\breve f:\sigma(\Sigma')H\to S$ defined by 
$
\breve f(gh):=\gamma(h)^{-1}f(g)
$ for every $g\in \sigma(\Sigma')$ 
and every $h\in H$.
We now define a smooth section 
\[
\widetilde f:\pgh(V)\to \bS
\ ,\ 
\widetilde f(gH):=[g,\breve f(g)]\text{\ \ for every }g\in \sigma(\Sigma')H.
\]
If $x\in\oline{\g q}$, $g\in\sigma(\Sigma')$, and 
$h\in H$, then 
\begin{align*}
L_x \breve f(gh)
&
=\gamma(h)^{-1}\cdot L_{\Ad(h)x}^{}\breve f(g)=
\gamma(h)^{-1}\cdot L_{\Ad(h)x}^{} f(g)
=-\left(
\beta(x)\cdot\gamma(h)^{-1} \right)\cdot
f(g)
=-\beta(x)\cdot \breve f(gh).
\end{align*}
Therefore Lemma \ref{lem-schol}  
implies that $\widetilde f$ is a holomorphic section of $\bS$.

For every $a\in G$ we set $V_a:=\ell_a(\pgh(V))$ and define
$
\breve f_a:\pgh^{-1}(V_a)\to S$ by $\breve f_a:=\breve f\circ \ell_{a^{-1}}^{}
$.
Note that $L_x \breve f_a(g)=-\beta(x)\cdot \breve f_a(g)$ for every $x\in\oline{\g q}$ and every $g\in \pgh^{-1}(V_a)$.
Therefore the charts
\[
\varphi_a:V_a\times S\to \bS\ ,\ \varphi_a(gH,s):=[g,\breve f_a(g)s]\text{\ \ for every }gH\in V_a\text{ and every }s\in S,
\]
are holomorphic (Lemma~\ref{lem-schol}) 
and turn $\bS$ into a holomorphic principal $S$-bundle over $M$.
\end{proof}
\subsection{The holomorphic vector bundle $\W_\rho$}
\label{sec-hvebun}
Let  $\beta:\oline{\g q}\to\g s$ be a Lie algebra homomorphism that satisfies the conditions of Proposition \ref{galcpx} and Theorem 
\ref{thm-VGyekL}(ii), and suppose that $\bS$ is equipped with the almost complex structure corresponding to $\beta$.
Let
$\ccW$ be a complex Hilbert space, and $\rho:S\to \GL(\ccW)$ be a
homomorphism of complex Lie groups.
Let $\W_\rho:=G\times_{\rho\circ\gamma} \ccW$ denote the vector bundle over $M$ associated to the principal 
$H$-bundle $G\to M$,
corresponding to the smooth action 
\[
H\times\ccW\to \ccW
\ , 
\ (h,w)\mapsto \rho(\gamma(h))w.
\]
That is, $\W_\rho$ is the quotient of $G\times \ccW$ by the right action of $H$ defined by 
\[
(g,w)\cdot h:=(gh,\rho(\gamma(h^{-1}))w)\text{\,\ for every }(g,w)\in G\times \ccW\text{ and every }h\in H.
\] 
We define the associated vector bundle $\bS\times_{\rho}\ccW$ similarly. 
Since the principal $S$-bundle 
$\bS\to M$ and  the action of $S$ on $\ccW$ are holomorphic,  $\bS\times_{\rho}\ccW$ is a holomorphic vector bundle over $M$. 
The complex manifold structure on $\bS\times_{\rho}\ccW$ can be transferred to $\W_\rho$
by the $G$-equivariant diffeomorphism 
\begin{equation*}
\bS\times_{\rho}\ccW
\to \W_\rho
\ ,\ [[g,s],w]\mapsto [g,\rho(s)w]
\text{ \,for }g\in G,\ s\in S,\text{ and }w\in\ccW.
\end{equation*}
From now on, we always assume that $\W_\rho$ is equipped with 
the latter $G$-invariant 
holomorphic vector bundle structure.

Let $\pgh_\bS : \bS \to M$ be the projection of the principal 
$S$-bundle $\bS$ over $M$. 
Then we can write every holomorphic section 
$\sigma : M \to \W_\rho$ as 
$\sigma(\pgh_\bS(p)) = [p, f(p)]$, where 
$f : \bS \to \ccW$ is a holomorphic function. 
Let $\Gamma_\textrm{hol}(\W_\rho)$ denote the space of holomorphic 
sections of the bundle $\W_\rho$. We then obtain a linear bijection 
\begin{equation}
\label{lbij*Ghol}
 \Gamma_{\rm hol}(\W_\rho) \to \cO(\bS,\ccW)_\rho := 
\{ f \in \cO(\bS,\ccW): 
(\forall p \in \bS)(\forall s\in S)\ 
f(p\cdot s) = \rho(s)^{-1} f(p)\}.
\end{equation} 
This bijection intertwines the natural action of $G$ on 
$\Gamma_{\rm hol}(\W_\rho)$ with the action on $\cO(\bS,\ccW)_\rho$ defined by 
\[ (g\cdot f)(p) := f(g^{-1}\cdot p)
\,\text{ for every $p\in\bS$ and every $g\in G$}.\] 

\begin{remark} \label{rem:loctriv}
{\rm Suppose that the bundle $\bS \cong G \times_{\gamma} S\to M$ has a holomorphic 
section $\tilde f : U \to \bS$ over the open subset $U \subseteq M$. 
By Lemma \ref{lem-schol}, $\tilde f$ can be written as  $\tilde f(gH) = [g, f(g)]$, where
$f : \pgh^{-1}(U) \to S$ is a smooth map satisfying 
$f(gh)=\gamma(h)^{-1}f(g)$ for every $g\in G$ and every $h\in H$, and 
$L_x f(g)=-\beta(x)\cdot f(g)$ for every $g\in G$ and every $x\in\oline{\g q}$.
The local trivialization of $\bS\res_U$ results in a holomorphic 
trivialization of $\W_\rho\res_U$ given by  
\begin{equation}
\label{GTticWc}
U\times \ccW\to \W_\rho\res_U\ ,\ (gH,w)\mapsto [g,\rho(f(g))w].
\end{equation}
}\end{remark} 

Sometimes it is also convenient to realize holomorphic sections of 
$\W_\rho$ as functions on $G$. Let 
$\cC^\infty(G,\ccW)_\rho$ denote the vector space of smooth maps $f:G\to\ccW$ satisfying 
\[
f(gh)=\rho(\gamma(h^{-1}))f(g)
\text{ for every }g\in G\text{ and every }h\in H.
\]
The group $G$ acts on $\cC^\infty(G,\ccW)_\rho$ by left translation, that is,
$g_\circ \cdot f(g):=f(g_\circ^{-1}g)$ for $g_\circ,g\in G$. Let $\cC^\infty(G,\ccW)_{\rho,\beta}$ denote the vector space of maps $f\in \cC^\infty(G,\ccW)_\rho$ which satisfy
\begin{equation}
  \label{eq:lx1}
L_xf(g)=-\dd\rho(\beta(x))f(g)\text{ for every }x\in\oline{\g q}\text{ and every }
g\in G.
\end{equation}
Let $\Gamma_\infty(\W_\rho)$ denote the space of smooth sections of $\W_\rho$.
Then we have a linear isomorphism
\begin{equation}
\label{GaMinfC}
\Gamma_\infty(\W_\rho) \xrightarrow{\ \cong\ } 
\cC^\infty(G,\ccW)_\rho\ ,\ 
\widetilde f\mapsto f, 
\end{equation}
where $\widetilde f(gH)=[g,f(g)]$. 
This isomorphism restricts to a $G$-equivariant isomorphism
\begin{equation}
\label{GaMinfC-H}
\Gamma_\mathrm{hol}(\W_\rho) \xrightarrow{\ \cong\ } 
\cC^\infty(G,\ccW)_{\rho,\beta}. 
\end{equation}

\subsection{$G$-invariant Hilbert spaces}

We now present some results on unitary representations of $G$ which can be realized on Hilbert spaces that are embedded in 
$\Gamma_\mathrm{hol}(\W_\rho)$, or equivalently, in
$\cC^\infty(G,\ccW)_{\rho,\beta}$.
 Our main tool is the theory of reproducing kernel Hilbert spaces. 
For a detailed exposition of the theory, see \cite{NeHo}.

Let $\ccH\sseq \cC^\infty(G,\ccW)_{\rho,\beta}$ be a Hilbert space such that, 
for every $g\in G$, the point evaluation $\ev_g^{}:\ccH\to \ccW$, $\ev_g^{} f:=f(g)$ is continuous. Set 
\begin{equation}
\label{RKGtG}
\breve K:G\times G\to \rmB(\ccW)\ ,\ \breve K(g,g'):=\ev_g^{}\circ\ev_{g'}^*.
\end{equation}
where $\ev_g^*:\ccW\to\ccH$ denotes the adjoint of $\ev_g^{}$.
Now let us denote the manifold $\bS$, equipped with the opposite 
complex manifold structure, by $\oline \bS$. 
By the isomorphism \eqref{GaMinfC-H},
we can embed $\ccH$ in
$\cO(\bS,\ccW)_\rho \cong \Gamma_{\rm hol}(W_\rho)$. 
The corresponding point evaluations $\ev_p^{}:\ccH\to \ccW$, $p\in \bS$, 
are continuous. Set 
\begin{equation}
\label{RKMtM}
K:\bS\times \oline \bS\to \rmB(\ccW)
\ ,\  
K(p,p'):=\ev_p^{}\circ\ev_{p'}^*.
\end{equation}
We call the maps \eqref{RKGtG} and \eqref{RKMtM}
the \emph{reproducing kernel} of $\ccH$ on $G$, resp., $\bS$.
It is easy to check that 
\[ K([g,s],[g',s'])=\rho(s^{-1})\breve K(g,g')\rho({s'}^{-1})^*.\]
By a standard fact in the theory of reproducing kernel Hilbert spaces \cite[Lem. I.1.5]{NeHo}, 
the Hilbert space $\ccH$ is uniquely determined by $\breve K$, resp., $K$. 

\begin{dfn}
A map $K:\bS\times \oline \bS\to \rmB(\ccW)$ is called
{\it $(G,\rho)$-invariant} if it is invariant under the 
diagonal $G$-action on $\bS \times \oline \bS$ and satisfies 
\begin{equation}
\label{srhogmK+}
K(p\cdot s,p'\cdot s') = \rho(s^{-1})K(p,p') \rho({s'}^{-1})^*
\,\text{ for every $(p,p')\in \bS\times \oline{\bS}$ and every $s,s' \in \bS$.}
\end{equation}
We call $K$ \emph{positive definite} if 
for every $n\in\N$, every
$w_1,\ldots,w_n\in\ccW$, and every $p_1,\ldots,p_n\in \bS$, we have 
$
\sum_{1\leq i,j\leq n}\lag w_i,K(p_i,p_j)w_j\rag\geq 0$.
\end{dfn}

\begin{dfn}
\label{Ginvdfn}
A Hilbert space $\ccH\sseq \cC^\infty(G,\ccW)_{\rho,\beta}$ with continuous point evaluations $\ev_g^{}$, $g\in G$,  is called \emph{$G$-invariant} if it satisfies the following conditions.
\begin{itemize}
\item[(i)] The map $\ev_\yek^*:\ccW\to \ccH$ is an isometry.
\item[(ii)] $\ccH$ is an invariant subspace of $\cC^\infty(G,\ccW)_{\rho,\beta}$ 
under the action of $G$ by left translation. The so obtained 
$G$-representation $(\pi,\ccH)$ is unitary. 
\end{itemize}
\end{dfn}

\begin{rmk} 
\label{GINVR}
Let $\ccH\sseq \cC^\infty(G,\ccW)_{\rho,\beta}$ 
be a Hilbert space with continuous point evaluations $\ev_g^{}$, $g\in G$ and 
reproducing kernel $\breve K:G\times G\to \rmB(\ccW)$.
Then $\ccH$ is $G$-invariant if and only if 
$\breve K$ satisfies 
\begin{equation}
\label{bK11=1}
\breve K(\yek,\yek)=\yek_\ccW\,\text{ and }\,
\breve K(g_\circ g,g_\circ g')=\breve K(g, g')\quad \mbox{ for every }\quad 
g_\circ,g,g'\in G
\end{equation} 
(cf.\ \cite[Thm.~I.4]{NeHo}). For the sake of completeness, we recall the argument. 

Suppose that $\ccH$ is $G$-invariant. 
Then $\breve K(\yek,\yek)=\ev_\yek^{}\circ\ev_\yek^*=\yek_\ccW$. 
Since the action of $G$ on $\ccH$ by 
$(\pi(g_\circ)f)(g) := f(g_\circ^{-1}g)$ is unitary,  the equality
$\ev_g \circ \pi(g_\circ) = \ev_{g_\circ^{-1}g}$
leads to 
\[ \breve K(g_\circ^{-1}g, g_\circ^{-1} g') 
= \ev_{g_\circ^{-1}g}\circ\ev_{g_\circ^{-1}g'}^*
=  \ev_g \circ \pi(g_\circ)  \pi(g_\circ)^* \circ  \ev_{g'}^* 
=  \ev_g \circ\ev_{g'}^* = \breve K(g,g').\] 

Conversely, suppose that \eqref{bK11=1} holds.
Then
$
\lag \ev_\yek^*w,\ev_\yek^*w'\rag 
=
\lag w,\breve K(\yek,\yek)w'\rag=\lag w,w'\rag$, so that 
$\ev_\yek^*$ is an isometry.
Next we set $\ccH^\circ:=\spn_\C\{\ev_g^*w\,:\,g\in G,\,w\in\ccW\}\sseq \ccH$.
The relation $\lag f,\ev_g^*w\rag =\lag f(g),w\rag$ implies that $\ccH^\circ$ is a dense subspace of $\ccH$ because its orthogonal subspace is trivial. 
The invariance of the kernel $\breve K$ now implies that 
$\ev_{g_\circ^{-1}g}\circ \ev_{g'}^* =  \ev_g \circ \ev_{g_\circ g'}^*,$ 
which leads to 
\begin{equation}
\label{g-cev*gg}
g_\circ \cdot \ev_g^*w=\ev_{g_\circ g}^*w\text{ for every }
g,g_\circ\in G\text{ every }w\in \ccW.
\end{equation}
From \eqref{g-cev*gg} it follows that $\ccH^\circ$ is invariant under the action of $G$ by left translations. Moreover, \eqref{g-cev*gg} 
also implies that
$
\lag g_\circ\cdot \ev_g^*w,g_\circ\cdot\ev_{g'}^*w'\rag=\lag 
\ev_g^*w,\ev_{g'}^*w' 
 \rag 
$ 
for $g_\circ,g,g'\in G$ and $w,w'\in \ccW$. It follows that the action of every $g_\circ\in G$ by left translation on $\ccH^\circ$ extends uniquely to an isometry $\pi(g_\circ):\ccH\to \ccH$. 
That 
$\pi(g_\circ)f(g)=f(g_\circ^{-1}g)$ for every $g,g_\circ\in G$ and every $f\in\ccH$ 
follows from 
\[
\lag \ev_g^{}\pi(g_\circ) f,w\rag
=\lag f,\pi(g_\circ^{-1})\ev_g^*w\rag
=
\lag f,\ev_{g_\circ^{-1}g}^*w\rag=\lag f(g_\circ^{-1}g),w \rag.\]
Finally, from continuity of the matrix coefficients 
$
g_\circ\mapsto \lag \pi(g_\circ)\ev_g^*w,\ev_g^*w\rag=\lag w,\ev^{}_{g_\circ g}(\ev_g^* w)\rag
$ it follows that $(\pi,\ccH)$ is strongly continuous.
\end{rmk}

\begin{prp}
\label{prpEVinBe} 
For a $G$-invariant Hilbert space $\ccH\sseq \cC^\infty(G,\ccW)_{\rho,\beta}$, the following statements are true.
\begin{itemize}
\item[{\upshape (i)}] The unitary $G$-representation 
$(\pi, \ccH)$ is smooth. In fact, 
the subset $\{\ev_g^*w\,:\,g\in G,w\in \ccW\}$ is a total subset 
of $\ccH$ consisting of smooth vectors.
\item[{\upshape (ii)}] The subspace $\ccW^\ev:=\ev^{*}_\yek\ccW\sseq\ccH$ is closed and $H$-invariant. 
\item[{\upshape (iii)}] Set
 $\rho^\ev(h):=\pi(h)\big|_{\ccW^\ev}$ for every $h\in H$. 
Then $(\rho\circ\gamma, \ccW)$ is a unitary representation of $H$ 
and $\ev_\yek^* : \ccW \to \ccW^{\rm ev}$ is a unitary equivalence between the $H$-representations 
$\rho\circ\gamma$ and $\rho^\ev$.

\item[{\upshape (iv)}] The reproducing kernel 
$K:\bS\times \oline \bS\to\rmB(\ccW)$ of $\ccH$ is holomorphic and 
$(G,\rho)$-invariant and satisfies $K(p_\circ, p_\circ)=\yek_\ccW$ 
for $p_\circ: = [\yek,\yek]$. 
\end{itemize} 
\end{prp}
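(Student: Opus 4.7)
The proof is a sequence of routine verifications built on the reproducing-kernel calculus of Remark~\ref{GINVR} and the equivariance identity $g_\circ\cdot \ev_g^*w=\ev_{g_\circ g}^*w$ from \eqref{g-cev*gg}.

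For (i) I would first check that $\ev_\yek^* w$ is a smooth vector for every $w\in\ccW$. For any $f\in\ccH$, the matrix coefficient
\[
g\mapsto \langle \ev_g^*w, f\rangle = \langle w, f(g)\rangle
\]
is smooth because $f$ lies in $\cC^\infty(G,\ccW)_{\rho,\beta}$. Weak smoothness of the orbit map of a vector in a unitary representation of a Fr\'echet--Lie group upgrades to strong smoothness, so $\ev_\yek^*w\in\ccH^\infty$. The identity \eqref{g-cev*gg} then shows that the orbit map $g\mapsto \pi(g)\ev_\yek^*w = \ev_g^*w$ is smooth, so all vectors $\ev_g^*w$ are smooth. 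The family $\{\ev_g^*w : g\in G,\,w\in\ccW\}$ is total because its orthogonal complement consists of $f\in\ccH$ with $f(g)=0$ for every $g$, whence $f=0$. In particular $\ccH^\infty$ is dense and $(\pi,\ccH)$ is smooth.

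For (ii) and (iii), closedness of $\ccW^\ev$ is immediate from Definition~\ref{Ginvdfn}(i), which guarantees that $\ev_\yek^*$ is an isometry. The key computation
\[
\langle f, \ev_h^*w\rangle = \langle f(h), w\rangle = \langle \rho(\gamma(h^{-1}))f(\yek), w\rangle = \langle f, \ev_\yek^*\rho(\gamma(h^{-1}))^*w\rangle
\]
yields $\ev_h^*w = \ev_\yek^*\rho(\gamma(h^{-1}))^*w$, and combined with $\pi(h)\ev_\yek^*w = \ev_h^*w$ from \eqref{g-cev*gg} this proves that $\ccW^\ev$ is $H$-invariant. Conjugating the unitary operator $\pi(h)|_{\ccW^\ev}$ by the isometry $\ev_\yek^*$ produces $\rho(\gamma(h^{-1}))^*$, which is therefore unitary for every $h$, so $\rho\circ\gamma$ takes values in $\Uu(\ccW)$. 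Using $\rho(\gamma(h^{-1}))^*=\rho(\gamma(h))$ in the above conjugation then yields the intertwining identity $\rho^\ev(h)\circ\ev_\yek^* = \ev_\yek^*\circ\rho(\gamma(h))$.

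For (iv), under the identification $\ccH\hookrightarrow \cO(\bS,\ccW)_\rho$ from \eqref{lbij*Ghol}, the map $p\mapsto \ev_p\in \rmB(\ccH,\ccW)$ is holomorphic because $p\mapsto f(p)$ is holomorphic for each $f\in\ccH$, while $p'\mapsto \ev_{p'}^*$ is anti-holomorphic; hence $K(p,p')=\ev_p\circ\ev_{p'}^*$ is holomorphic on $\bS\times \oline{\bS}$. The $(G,\rho)$-invariance of $K$ follows by combining the formula $K([g,s],[g',s']) = \rho(s^{-1})\breve K(g,g')\rho({s'}^{-1})^*$ with the translation invariance \eqref{bK11=1} of $\breve K$ from Remark~\ref{GINVR}, and $K(p_\circ, p_\circ) = \breve K(\yek,\yek) = \yek_\ccW$. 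The only genuinely delicate point is the smoothness step in (i): the passage from weak to strong smoothness of an individual vector for a unitary representation of a Fr\'echet--Lie group. I would invoke the appropriate result from \cite{NeHo}, or, failing that, iterate the definition of $\cC^k$ from Section~\ref{sec-InfiniteDim} and differentiate the Hilbert-valued map $g\mapsto \ev_g^*w=\pi(g)\ev_\yek^*w$ directly. All remaining steps are formal consequences of the definitions.
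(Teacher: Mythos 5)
Your treatment of (i)--(iii) is essentially the paper's own argument: the same identity $\ev_h^*=\ev_\yek^*\circ\rho(\gamma(h^{-1}))^*$, the same use of \eqref{g-cev*gg}, and the same conclusion that $\rho(\gamma(h^{-1}))^*$ is unitary because it is conjugate to $\pi(h)\vert_{\ccW^\ev}$ under the isometry $\ev_\yek^*$. For the smoothness step, the theorem you need is \cite[Thm 7.2]{nediff}, not \cite{NeHo}; it in fact requires only smoothness of the single diagonal coefficient $g_\circ\mapsto\lag\pi(g_\circ)\ev_g^*w,\ev_g^*w\rag=\lag w,(\ev_g^*w)(g_\circ g)\rag$, which is exactly how the paper argues (using that $\ev_g^*w$ itself lies in $\cC^\infty(G,\ccW)_{\rho,\beta}$). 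Your proposed fallback --- differentiating the orbit map directly from weak smoothness via the definition of $\cC^k$ --- would not constitute a proof; the weak-to-strong upgrade is precisely the nontrivial content of that theorem, so the citation is not optional.

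The one concrete gap is in (iv). You deduce holomorphy of $p\mapsto\ev_p$ (and anti-holomorphy of $p'\mapsto\ev_{p'}^*$) from the holomorphy of each $f\in\ccH$, i.e.\ from weak holomorphy of an operator-valued map. On the infinite-dimensional base $\bS$ this upgrade is not automatic: a weakly/G\^ateaux holomorphic $\rmB(\ccW)$-valued map need only be holomorphic once it is known to be locally bounded, and establishing local boundedness is exactly the step the paper does not skip. Its proof notes $\ev_g=\ev_\yek\circ\pi(g^{-1})$, hence $\breve K(g,g')=\ev_\yek\circ\pi(g^{-1}g')\circ\ev_\yek^*$ has norm at most $1$ by unitarity, so that $\|K([g,s],[g',s'])\|\leq\|\rho(s^{-1})\|\cdot\|\rho({s'}^{-1})^*\|$ is locally bounded; only then does it invoke the reproducing-kernel lemma \cite[Lem. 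A.III.9(iii)]{NeHo} to conclude that $K$ is holomorphic on $\bS\times\oline\bS$. You should insert this local-boundedness estimate (it is cheap, and it again uses the $G$-invariance/unitarity you already have in hand); the $(G,\rho)$-invariance and the normalization $K(p_\circ,p_\circ)=\yek_\ccW$ in your last step are fine and coincide with the paper.
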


\begin{proof} (i) To prove that $(\pi,\ccH)$ is smooth, it is enough to show that 
$\ev_g^*w\in\ccH^\infty$ for $g\in G$, $w\in \ccW$.
Since $\ev_g^*w\in \cC^\infty(G,\ccW)_{\rho,\beta}$,  the map  
$
G\mapsto\C
$, 
$ 
g_\circ\mapsto\lag\pi(g_\circ)\ev_g^*w,\ev_g^*w\rag=\lag w,\ev_{g_\circ g}^{}\circ\ev_g^*w\rag
$ is smooth. Therefore \cite[Thm 7.2]{nediff} implies that $\ev_g^*w\in\ccH^\infty$.

(ii) Clearly $\ccW^\ev$ is closed because $\ev_\yek^{*}$ is an isometry.  For every $h\in H$, $\ev_h^{}=\rho(\gamma(h^{-1}))\circ \ev_\yek^{}$
%
and therefore $\ev_h^*=\ev_\yek^*\circ \rho(\gamma(h^{-1}))^*$.
Therefore \eqref{g-cev*gg} implies that
$\pi(h)\circ \ev_\yek^*=\ev_\yek^*\circ\rho(\gamma(h^{-1}))^*$ for every $h\in H$.
In particular, $\ccW^\ev$ is $H$-invariant. 

(iii) Since $\pi(h)$ is unitary and $\ev_\yek^*$ is an isometry,
from the equality $\ev_h^*=\ev_\yek^*\circ \rho(\gamma(h^{-1}))^*$ (proved in (ii) above) it follows that
the operator 
$\rho(\gamma(h^{-1}))^*$ is also unitary.
In particular, we obtain $\rho(\gamma(h^{-1}))^*=\rho(\gamma(h))$.
The relation  
$\pi(h)\circ \ev_\yek^*=\ev_\yek^*\circ\rho(\gamma(h))$ now implies that $\rho\circ\gamma$ and $\rho^\ev$ are unitarily equivalent.

(iv) For every $g\in G$, $\ev_g^{}=\ev_\yek^{}\circ\pi(g^{-1})$. It follows that  $\breve K(g,g')=\ev_\yek^{}\circ\pi(g^{-1}g')\circ\ev_\yek^*$
for every $g,g'\in G$. Since $(\pi,\ccH)$ is unitary, we obtain 
$\|\breve K(g,g')\|\leq 1$, and consequently
\[
\|K([g,s],[g',s'])\|\leq \|\rho(s^{-1})\|\cdot \|\rho({s'}^{-1})^*\|.
\] 
This means that the map $K:\bS\times\oline \bS\to \rmB(\ccW)$ is locally bounded. 
Moreover, by \eqref{GaMinfC-H} 
and \eqref{lbij*Ghol}
we can embed $\ccH$ in $\mathcal O_\rho(\bS,\ccW)$, 
so that the general theory of reproducing kernels 
\cite[Lem. A.III.9(iii)]{NeHo} implies that the map
$K:\bS\times\oline \bS\to \rmB(\ccW)$ is holomorphic.
The remaining statements follow from the 
relation between $K$ and $\breve K$, and $\breve K(\yek,\yek)=\yek_\ccW$.
\end{proof}

\begin{prp}
\label{adprp-Gi-ii-iii}
Let $\ccH\sseq \cC^\infty(G,\ccW)_{\rho,\beta}$ be 
a $G$-invariant Hilbert space and $(\pi,\ccH)$ be the unitary representation of $G$  by left translation, defined in {\upshape Definition 
\ref{Ginvdfn}}. Set $\ccW^\ev=\ev_\yek^{*}\ccW\sseq \ccH$.
Then,
\begin{itemize}
\item[{\upshape(i)}] $\pi(G)\ccW^\ev:=\spn_\C\{\pi(g)w\,:\,g\in G,\,w\in\ccW^\ev\}$ is a dense subspace of $\ccH$.
\item[{\upshape(ii)}]
 $\Uu(\gC)\ccW^\ev:=\{\dd\pi(x)w\,:\,
x\in\Uu(\gC)\text,\,w\in\ccW^\ev\}$ is a dense subspace of $\ccH$.

\end{itemize}
 Suppose, in addition, that there exists an 
 $\Ad(H)$-invariant decomposition
$
\gC=\oline{\g n}\oplus\hC\oplus\g n
$ 
such that $\g q=\hC\oplus \g n$, where $\g q$ is the Lie algebra defined in \eqref{eq:c-str}, and 
$\oline{\g n}\sseq\ker(\beta)$.
Then,
\begin{itemize}
\item[{\upshape(iii)}]
$\Uu(\oline{\g n})\ccW^\ev$ is a dense subspace of $\ccH$.
\item[{\upshape(iv)}] $\ccW^\ev=(\ccH^\infty)^{\g n}$, where
$(\ccH^\infty)^{\g n}:=\{v\in \ccH^\infty\, :\, \dd\pi(x)v=0\text{ for every }x\in\g n\}$.
\item[{\upshape(v)}] Assume that every closed $\rho\circ\gamma(H)$-invariant subspace of $\ccW$ is also $\rho(S)$-invariant.
Set $\rho^\ev(h):=\pi(h)\big|_{\ccW^\ev}$ for every $h\in H$. 
Let $\pi(G)'$, resp., $\rho^\ev(H)'$, denote the 
commutants of $\pi(G)$, resp., $\rho^\ev(H)$.
Then the map 
\begin{equation}
\label{RvNag}
\pi(G)'\to\rho^\ev(H)'\ ,\  
T\mapsto T\big|_{\ccW^\ev}
\end{equation}
is an isomorphism of von Neumann algebras.
\end{itemize}
\end{prp}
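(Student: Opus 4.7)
The plan is to establish (i)--(iv) by straightforward combinations of the reproducing kernel formalism, the holomorphic realization \eqref{GaMinfC-H}, and the Poincar\'e--Birkhoff--Witt (PBW) decomposition, and to handle the substantially more delicate (v) by using the hypothesis to extend $S_0$ to a $\pi(G)$-intertwiner via pointwise multiplication.

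For (i), the identity $\ev_g^*w=\pi(g)\ev_\yek^*w$ from \eqref{g-cev*gg} immediately gives $\pi(G)\ccW^\ev=\spn_\C\{\ev_g^*w:g\in G,w\in\ccW\}$, and the latter is dense in $\ccH$ by Remark \ref{GINVR}. For (ii), realize $\ccH\sseq\cO(\bS,\ccW)_\rho$ via \eqref{GaMinfC-H} and \eqref{lbij*Ghol}, and let $u\in\ccH$ be orthogonal to $\dd\pi(\Uu(\gC))\ccW^\ev$. For each $w_0\in\ccW$, the matrix coefficient $\phi(g):=\lag u,\pi(g)\ev_\yek^*w_0\rag=\lag u(g),w_0\rag$ lifts to a holomorphic scalar function $\widetilde\phi$ on $\bS$, and by hypothesis all iterated left-derivatives of $\phi$ at $\yek$ along $\gC$ vanish. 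Applying Lemma \ref{X1X2dense} to the vector fields generating the left $G$-action on $\bS$ forces the Taylor series of $\widetilde\phi$ at $p_\circ$ to vanish; connectedness of $\bS$ together with the identity principle for holomorphic functions then yields $\widetilde\phi\equiv 0$, so $u\perp\pi(G)\ccW^\ev$, and (i) gives $u=0$.

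The key computation for (iii) and (iv) is that $\g n$ annihilates $\ccW^\ev$. The relation $\dd\pi(Y)u(\yek)=-L_Yu(\yek)$ on $\ccH^\infty$ combined with \eqref{eq:lx1} and $\oline{\g n}\sseq\ker\beta$ gives $(\dd\pi(Y)u)(\yek)=0$ for $Y\in\oline{\g n}$ and $u\in\ccH^\infty$; skew-adjointness $\dd\pi(X)^*=-\dd\pi(\oline X)$ and density of $\ccH^\infty$ then force $\dd\pi(X)v=0$ for $X\in\g n$ and $v\in\ccW^\ev$. Since $\ccW^\ev$ is $\dd\pi(\hC)$-stable by Proposition \ref{prpEVinBe}(ii), the PBW decomposition $\Uu(\gC)=\Uu(\oline{\g n})\Uu(\hC)\Uu(\g n)$ yields $\Uu(\gC)\ccW^\ev=\Uu(\oline{\g n})\ccW^\ev$, which is dense by (ii), proving (iii). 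For (iv), the inclusion $\ccW^\ev\sseq(\ccH^\infty)^{\g n}$ is the above annihilation; conversely, any $u\in(\ccH^\infty)^{\g n}\cap(\ccW^\ev)^\perp$ is orthogonal to every $\dd\pi(Y_1)\cdots\dd\pi(Y_r)v$ with $Y_i\in\oline{\g n}$, $v\in\ccW^\ev$, since moving each $\oline Y_j\in\g n$ across by skew-adjointness and using $\g n\cdot u=0$ makes the pairing vanish; density from (iii) then gives $u=0$.

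For (v), well-definedness of $T\mapsto T|_{\ccW^\ev}$ follows from (iv) since any $T\in\pi(G)'$ commutes with $\dd\pi(\g n)$ on smooth vectors; the map is manifestly a unital *-homomorphism, and injectivity follows from (i) exactly as in Remark \ref{GINVR}. The substantive point is surjectivity. First, projections in $\rho\circ\gamma(H)'$ are, by hypothesis, projections in $\rho(S)'$, so by the fact that a von Neumann algebra is the weak closure of the span of its projections, $\rho\circ\gamma(H)'=\rho(S)'$. Given $S_0\in\rho^\ev(H)'$ with corresponding $S_0'\in\rho(S)'$ via the unitary $\ev_\yek^*$, I would define $T$ on the holomorphic realization $\ccH\sseq\cO(\bS,\ccW)_\rho$ by pointwise multiplication $(Tf)(p):=S_0'f(p)$; the commutation $S_0'\rho(s^{-1})=\rho(s^{-1})S_0'$ keeps $Tf$ in $\cO(\bS,\ccW)_\rho$, and $T$ commutes with left $G$-translation on $\cO(\bS,\ccW)_\rho$ tautologically. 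The nontrivial step is to show $T$ restricts to a bounded operator on $\ccH$; by the kernel formula $K([g,s],[g',s'])=\rho(s^{-1})\breve K(g,g')\rho(s'^{-1})^*$ this reduces to $S_0'$ commuting with $\breve K(g,g')$ for all $g,g'\in G$. I expect to extract this commutation from the local holomorphic trivialization of $\bS$ in Theorem \ref{thm-VGyekL}(ii), which expresses $\breve K$ near $\yek$ in terms of $\rho(S)$-valued functions, combined with $G$-invariance and holomorphy to propagate across $G$. Verifying this compatibility is the main obstacle.
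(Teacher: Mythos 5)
Parts (i) and (iii) are correct and essentially the paper's argument, but there are genuine gaps, the decisive one in (v). Surjectivity of \eqref{RvNag} is precisely where the hypothesis that closed $\rho\circ\gamma(H)$-invariant subspaces are $\rho(S)$-invariant must be converted into information about the reproducing kernel, and this is the step you leave open (``verifying this compatibility is the main obstacle''). The missing idea is the diagonal argument on $\bS\times\oline\bS$: it suffices to realize every projection $P\in\rho^\ev(H)'$; the corresponding decomposition $\ccW=\ccW_1\oplus\ccW_2$ consists of closed $\rho\circ\gamma(H)$-invariant, hence $\rho(S)$-invariant and (by unitarity of $\rho\circ\gamma$, passing to orthogonal complements) $\rho(S)^*$-invariant subspaces; for $w_j\in\ccW_j$ the holomorphic function $(p,p')\mapsto\lag K(p,p')w_1,w_2\rag$ vanishes on the diagonal because $K(p,p)=\rho(s^{-1})\rho(s^{-1})^*$, hence vanishes identically since holomorphic maps on $\bS\times\oline\bS$ are determined by their diagonal values \cite[Prop. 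A.III.7]{NeHo}; this gives $\pi(G)\,\ev_\yek^*\ccW_1\perp\ev_\yek^*\ccW_2$ and a $G$-invariant projection restricting to $P$. Your multiplication-operator plan needs exactly this diagonal argument to obtain $S_0'\breve K(g,g')=\breve K(g,g')S_0'$ (``propagating by the local trivialization'' is not a proof), and it has two further unaddressed points: for a non-self-adjoint $S_0'\in\rho(S)'$ the commutation with $\rho(s^{-1})^*$, hence the vanishing on the diagonal, is not automatic, and boundedness of pointwise multiplication on $\ccH$ requires a positive-definiteness comparison of kernels; both difficulties disappear if you work with projections only, which suffices.

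In (ii) your appeal to Lemma \ref{X1X2dense} is not justified at the stated level of generality: the lemma requires the complex span of the chosen vector fields at the point to be dense in the tangent space, but the fields generating the left $G$-action on $\bS$ span only the image of $\gC$ in $T_{p_\circ}\bS\cong(\gC\oplus\sC)/\oline{\g p}$, and by \eqref{pGbesC} this image is dense only if $\beta(\oline{\g q})$ is dense in $\g s$ --- which is not among the hypotheses (it holds for the Virasoro bundle $\bS_u$, but the proposition allows, e.g., trivial $\gamma$ and $\beta=0$ with $S$ nontrivial). The paper avoids this by descending to the base: a local holomorphic section converts the orthogonality data into vanishing of all iterated derivatives of a holomorphic $\ccW$-valued map near $m_\circ$ in $M$, where the $G$-fields do span $T_{m_\circ}M\cong\gC/\oline{\g q}$; alternatively you can repair your argument by using the equivariance $u(p\cdot s)=\rho(s)^{-1}u(p)$ to control the $S$-directions. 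Finally, in (iv), $(\ccH^\infty)^{\g n}\cap(\ccW^\ev)^\perp=\{0\}$ does not by itself yield $(\ccH^\infty)^{\g n}\sseq\ccW^\ev$; you need the additional (easy) remark that for $u\in(\ccH^\infty)^{\g n}$ the component $u-P_{\ccW^\ev}u$ is again smooth and annihilated by $\g n$ (since $\ccW^\ev\sseq(\ccH^\infty)^{\g n}$ consists of smooth vectors), so it lies in that trivial intersection; the paper instead argues directly that $\ccH=\ccW^\ev\oplus\oline{\oline{\g n}\,\Uu(\oline{\g n})\ccW^\ev}$.
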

\begin{proof}
(i)  From Proposition \ref{prpEVinBe}(i) and \eqref{g-cev*gg} we have
%
$\pi(g)\ev_\yek^{*}w=\ev_g^*w$ for $g\in G$ and $w\in \ccW$. 
Therefore $\pi(G)\ccW^\ev=\spn_\C\{\ev_g^*w\,:\,g\in G,\,w\in\ccW\}$, and
Proposition \ref{prpEVinBe}(i) implies that
$\pi(G)\ccW^\ev$ is a dense subset of $\ccH$.

(ii) Given a smooth map $f:G\to\mathscr K$, where $\mathscr K$ is a complex Hilbert space, 
we define 
\[
R_xf(g):=\lim_{t\to 0}\frac 1t (f(e^{-tx}g)-f(g))
\text{ for every $x\in\g g$ and every $g\in G$}.
\]
We extend $R_x$ to $x\in\Uu(\gC)$ as usual, by setting $R_{x+iy}:=R_x+iR_y$ 
and $R_{x_1\cdots x_m}:=R_{x_1}\cdots R_{x_m}$.
Set
\[
\delta:\ccH\to \mathrm{Hom}_\C^{}(\Uu(\gC),\ccW)\ ,\ 
\delta(f)(x):=(R_x f)(\yek)\text{ for every }x\in\Uu(\gC).
\] 
Fix $f\in\ccH\sseq \cC^\infty(G,\ccW)_{\rho,\beta}$ and $w\in\ccW$, 
and set $v:=\ev_\yek^*w\in \ccW^\ev$. 
By Proposition \ref{prpEVinBe}(i), the orbit map $\pi^v:G\to\ccH$, $\pi^v(g):=\pi(g)v$ is smooth,
and \[
\lag f(g),w\rag
=
\lag \ev_\yek^{}\pi(g)^{-1}f,w\rag
=
\lag 
f,\pi(g)\ev_\yek^*w
\rag
=\lag
f,\pi^v(g)
\rag.
\]
By differentiating both sides of the latter equality, we obtain
\begin{equation}
\label{RxphiwW}
\lag R_xf(g),w\rag
=\lag f,R_{\oline{x}}\pi^v(g)\rag
\text{ \,for every }g\in G
\text{ and every }
x\in\Uu(\gC).
\end{equation}
Next we prove that 
\begin{equation}
\label{Rxpiw}
R_x \pi^v(g)=\dd\pi(x^\dagger)\pi^v(g)
\quad \mbox{ for } \quad x\in\Uu(\gC), g\in G,
\end{equation}
where $x\mapsto x^\dagger$ is 
the principal anti-automorphism of $U(\gC)$,
 defined by $x^\dagger :=-x$ for $x\in\gC$. 
It suffices to prove the latter statement for $x=x_1\cdots x_n$ where $x_1,\ldots,x_n\in\g g$.
For $n=1$, the statement is obvious. For $n>1$,  
we prove the statement by induction. 
 For every 
$f_1\in\ccH^\infty$,
\begin{align*}
\lag 
R_{x_1\cdots x_n}\pi^v(g),f_1\rag&
=
\lim_{s\to 0}\frac{1}{s}
\left(
\lag 
R_{x_2\cdots x_n}\pi^v(e^{-sx_1}g),f_1
\rag
-\lag 
R_{x_2\cdots x_n}\pi^v(g)
,f_1\rag
\right)\\
&=-\lag \dd\pi(x_1)\pi^v(g),
\dd\pi(x_2\cdots x_n)f_1\rag
=(-1)^n\lag\dd\pi(x_n\cdots x_1)\pi^v(g),f_1\rag.
\end{align*}
Since $\ccH^\infty$ is a dense subspace of $\ccH$, we obtain 
$R_{x_1\cdots x_n}\pi^v(g)=
(-1)^n\dd\pi(x_n\cdots x_1)\pi^v(g)
$.

Setting $g=\yek$ in \eqref{RxphiwW}, we now obtain 
$
\lag\delta(f)(x),w\rag=\lag f,\dd\pi(\oline x^\dagger)v\rag
$. 
It follows immediately that
\begin{equation}
\label{UgperpWe}
\ker(\delta)=(\Uu(\gC)\ccW^\ev)^\perp.
\end{equation}

To complete the proof of (ii), we need to 
show that $\ker(\delta)=\{0\}$. Fix $f\in\ccH$ such that 
$\delta(f)(x)=0$ for every $x\in\Uu(\gC)$. 
As in Remark~\ref{rem:loctriv}, let 
$\tilde s : U \to \bS, gH \mapsto [g, s(g)]$ 
be a holomorphic section of $\bS\res_U$, where $U$ is an open connected 
neighborhood of $m_\circ$. 
Then the map 
$\varphi_f^{}:U\to \ccW$, $ \varphi_f^{}(gH):=\rho(s(g)^{-1})f(g)$,
is holomorphic, and a computation using the Leibniz rule shows that 
$
R_x(\varphi_f\circ \pgh)(\yek)=0
$ for every $x\in\Uu(\gC)$.
Set 
$\vfx_x(gH):=-\dd\pgh (x\cdot g)$
for every $x\in\g g$ and every $g\in G$. Then $\vfx_x$ is a smooth vector field on $M$, and $R_x(\psi\circ \pgh)=(\vfx_x\cdot \psi)\circ\pgh$ for every $\psi\in \cC^\infty(U,\ccW)$ and every $x\in\g g$. 
Lemma \ref{X1X2dense} implies that, in every local chart, we have 
 $\dd^r\varphi_f^{}(v_1,\ldots,v_r)=0$ for every $r>0$ and every $v_1,\ldots,v_r\in T_{m_\circ}M$. Since $\varphi_f^{}$ is holomorphic it vanishes on $U$. 
It follows that $f = 0$. 

(iii) First we prove that $\ccW^\ev\sseq (\ccH^\infty)^{\g n}$. 
Let $v := \ev^{*}_\yek w$, $w \in \ccW$. Applying \eqref{Rxpiw} with 
$g = \yek$, we obtain for $f \in \ccH$ and $x \in \g n$ the relation 
\[ \la \dd\pi(x)v, f \ra 
=-\la R_x \pi^v(\yek), f\ra 
= - \la w, (R_{\oline x} f)(\yek) \ra
=  \la w, (L_{\oline x} f)(\yek) \ra
= - \la w, \dd\rho(\beta(\oline x)) f(\yek) \ra = 0.\] 
Since $f \in \ccH$ was arbitrary, we obtain 
$\dd\pi(x)v=0$. 

From Proposition \ref{prpEVinBe}(ii)
it now follows that $\pi(H)\ccW^\ev\sseq \ccW^\ev$ and $\dd\pi(x)\ccW^\ev\sseq \ccW^\ev$ for every $x\in{\g q}$.
Consequently,
$\Uu(\gC)\ccW^\ev=\Uu(\oline{\g n})\Uu(\hC)\Uu(\g n)\ccW^\ev=\Uu(\oline{\g n})\ccW^\ev$, and from (ii) we derive that $\Uu(\oline{\g n})\ccW^\ev$ is dense in $\ccH$.

(iv) We have shown in the proof of (iii) that $\ccW^\ev\sseq (\ccH^\infty)^\g n$.
Next we prove that $(\ccH^\infty)^{\g n}\sseq \ccW^\ev$. Let $f_1\in(\ccH^\infty)^{\g n}$. 
Then
$
\lag f_1,\dd\pi(x)f\rag=-\lag \dd\pi(\oline{x})f_1,f\rag=0
$
 for every $f\in\ccH^\infty$
 and every 
$x\in \oline{\g n}$.
In particular, 
$f_1\in\left(\oline{\g n}\Uu(\oline{\g n})\ccW^\ev\right)^\perp$. 
From (iii)  it follows that
$\ccW^\ev+\oline{\g n}\Uu(\oline{\g n})\ccW^\ev$ is dense in $\ccH$.
Moreover, $\ccW^\ev\sseq(\oline{\g n}\Uu(\oline{\g n})\ccW^\ev)^\perp$ because
\[
\lag f,\dd\pi(x)f_2\rag=\lag \dd\pi(\oline{x}^\dagger)f,f_2\rag=0
\text{ for every 
}f,f_2\in\ccW^\ev
\text{ and every }x\in\oline{\g n}\Uu(\oline{\g n}).
\]
This shows that $f_1\in\ccW^\ev$.


(v)
First note that (iv) implies that $\ccW^\ev$ is invariant under 
$\pi(G)'$. Thus, the map \eqref{RvNag} 
is well-defined. 
Now 
let $T\in\pi(G)'$ such that $T\big|_{\ccW^\ev}=0$. Then
$T\big|_{\pi(G)\ccW^\ev}=0$, so that (i) implies that $T=0$. Therefore the map \eqref{RvNag} is an injection.

Next we prove surjectivity of \eqref{RvNag}.
Since every von Neumann algebra is generated by orthogonal projections 
\cite[Chap.~1, \S 1.2]{Dix69},
and images of von Neumann algebras under restriction maps 
are von Neumann algebras 
\cite[Chap.~1, \S 2.1, Prop.~1]{Dix69}, 
it is enough to show that every orthogonal 
projection in $\rho^\ev(H)'$ is contained in the image of the map
\eqref{RvNag}. Let $P \in \rho^\ev(H)'$.
Setting $\ccW^\ev_1 := P\ccW^\ev$ and $\ccW^\ev_2 := (I-P)\ccW^\ev$  yields an 
$H$-invariant orthogonal decomposition 
\begin{equation}
\label{WW1eW2e}
\ccW^\ev = \ccW^\ev_1\oplus\ccW^\ev_2.
\end{equation}
Now set $\ccH_{j}:=\oline{\pi(G)\ccW_j^\ev}$ for 
$j = 1,2$. Surjectivity of \eqref{RvNag} follows  
if we can prove that $\ccH_1\perp\ccH_2$, because then 
$\ccH = \ccH_1 \oplus \ccH_2$ is a $G$-invariant orthogonal 
direct sum, and the 
orthogonal projection $\widetilde P \in \pi(G)'$ onto $\ccH_1$  
 satisfies 
$\widetilde P\big|_{\ccW^\ev} = P$.
We now prove that $\ccH_1\perp \ccH_2$. Because of the relation 
$\pi(h)\circ \ev_\yek^*=\ev_\yek^*\circ\rho(\gamma(h))$ 
(Proposition \ref{prpEVinBe}(iii)),  the decomposition
\eqref{WW1eW2e}
 corresponds to an orthogonal decomposition $\ccW=\ccW_1\oplus\ccW_2$, where each $\ccW_j$, $j=1,2$, is a closed and $\rho\circ\gamma(H)$-invariant subspace. Thus, each $\ccW_j$ is $\rho(S)$-invariant, and 
since $\rho\circ\gamma$ is unitary, it is also $\rho(S)^*$-invariant. 
 Fix $w_j\in\ccW_j$ for $j=1,2$, and let $K:\bS\times \oline \bS\to 
\rmB(\ccW)$ be the reproducing kernel of $\ccH$, as defined in 
 \eqref{RKMtM}. 
By Proposition  \ref{prpEVinBe}(iv),
 the map 
\begin{equation}
\label{MMw1w2K}
\bS\times\oline \bS\to\C\ ,\ 
(p,p')\to \lag K(p,p')w_1,w_2\rag
\end{equation}
is holomorphic.
But if $p = [g,s]$, then 
$K(p,p)=\rho(s^{-1})K(g,g)\rho(s^{-1})^* = \rho(s^{-1})\rho(s^{-1})^*$, and 
therefore the map \eqref{MMw1w2K} vanishes on the diagonal
subset of $\bS\times\oline\bS$. 
Since every holomorphic map on $\bS\times\oline \bS$ 
is uniquely determined by its diagonal values
\cite[Prop. A.III.7]{NeHo},
the map 
\eqref{MMw1w2K} should also vanish for every  $(p,p')\in \bS\times \oline{\bS}$.
Consequently, for every $w_j\in\ccW_j$, $j=1,2$, we have
\[
\lag \pi(g)\ev_\yek^*w_1,\ev_\yek^*w_2\rag
=\lag\ev_g^*w_1,\ev_\yek^*w_2\rag
=\lag w_1,\breve K(g,\yek)w_2\rag  
=\lag w_1,K([g,\yek], [\yek,\yek])w_2\rag=0.
\]
That is, $\pi(G)\ccW_1^\ev\perp\ccW_2^\ev$. This implies that $\ccH_1\perp\ccH_2$.
\end{proof}
\begin{prp}
\label{pEXGi}
Let $K:\bS\times \oline \bS\to\rmB(\ccW)$ be a holomorphic,  
$(G,\rho)$-invariant, positive definite map which satisfies $K(p_\circ, p_\circ) 
= \yek_\ccW$ for $p_\circ := [\yek,\yek]$. 
Then there exists a $G$-invariant Hilbert space $\ccH\sseq 
\cO(\bS,\ccW)_\rho$ whose reproducing kernel, as defined in \eqref{RKMtM},
is equal to $K$.
\end{prp}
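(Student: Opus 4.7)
The plan is to construct $\ccH$ as the reproducing kernel Hilbert space of $K$ via the standard machinery in \cite{NeHo}, and then verify the two identities \eqref{bK11=1} so that Remark \ref{GINVR} automatically promotes $\ccH$ to a $G$-invariant Hilbert space in the sense of Definition \ref{Ginvdfn}.

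First, I would invoke the general construction for operator-valued positive definite kernels. This produces a Hilbert space $\ccH_K$ of $\ccW$-valued functions on $\bS$ containing, for each $p \in \bS$ and $w \in \ccW$, the vector
\[
K_p w : \bS \to \ccW, \quad K_p w(p') := K(p', p) w,
\]
with inner product $\lag K_p w, K_{p'} w' \rag = \lag w, K(p, p') w' \rag$, with continuous point evaluations $\ev_p$, and with $\ev_p^* w = K_p w$, so that $\ev_p \circ \ev_{p'}^* = K(p, p')$ by construction. Since $K$ is holomorphic on $\bS \times \oline{\bS}$, each $K_p w$ is a holomorphic function of its argument. The $(G, \rho)$-invariance \eqref{srhogmK+} gives
\[
K_p w(p' \cdot s) = K(p' \cdot s, p) w = \rho(s^{-1}) K(p', p) w = \rho(s)^{-1} K_p w(p'),
\]
so $K_p w \in \cO(\bS, \ccW)_\rho$. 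Both the holomorphy and the transformation rule pass to Hilbert space limits because the evaluations $\ev_{p'}$ and $\ev_{p' \cdot s}$ are continuous, so $\ccH_K \sseq \cO(\bS, \ccW)_\rho$.

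Next, I would transport $\ccH_K$ along the chain of isomorphisms $\cO(\bS, \ccW)_\rho \cong \Gamma_{\rm hol}(\W_\rho) \cong \cC^\infty(G, \ccW)_{\rho, \beta}$ furnished by \eqref{lbij*Ghol} and \eqref{GaMinfC-H}, obtaining a Hilbert space $\ccH \sseq \cC^\infty(G, \ccW)_{\rho, \beta}$ with continuous point evaluations $\ev_g$, $g \in G$. Its reproducing kernel $\breve K(g, g') := \ev_g \circ \ev_{g'}^*$ is related to $K$ via
\[
K([g, s], [g', s']) = \rho(s^{-1}) \breve K(g, g') \rho({s'}^{-1})^*,
\]
as recorded after \eqref{RKMtM}. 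Setting $s = s' = \yek$ and using $K(p_\circ, p_\circ) = \yek_\ccW$ gives $\breve K(\yek, \yek) = \yek_\ccW$, and the diagonal $G$-invariance of $K$ on $\bS \times \oline{\bS}$, applied to the representatives $[g, \yek]$ and $[g', \yek]$, yields $\breve K(g_\circ g, g_\circ g') = \breve K(g, g')$ for all $g_\circ, g, g' \in G$. Both identities of \eqref{bK11=1} are thereby in place, and Remark \ref{GINVR} delivers a unitary $G$-representation on $\ccH$ by left translation together with the isometry $\ev_\yek^*$, completing the verification that $\ccH$ is $G$-invariant with reproducing kernel $K$.

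The argument is in essence an unwinding of the general operator-valued RKHS theory together with bookkeeping of how the $G$- and $S$-invariance of $K$ interact with the canonical identifications. The step requiring the most care is the inclusion $\ccH_K \sseq \cO(\bS, \ccW)_\rho$, rather than merely the space of abstract $\ccW$-valued functions on $\bS$; this relies on holomorphy of $K$ (which guarantees holomorphy of each spanning vector $K_p w$) and on \eqref{srhogmK+} (which gives the $\rho$-equivariance), combined with continuity of point evaluations to pass these properties to norm limits.
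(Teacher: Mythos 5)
Your proposal is correct and follows essentially the same route as the paper: build the reproducing kernel Hilbert space spanned by the functions $K_{p,w}$ inside $\cO(\bS,\ccW)_\rho$ (the paper cites \cite[Prop. I.1.9(iii)]{NeHo} for the point you gloss slightly, namely that the completion still consists of holomorphic $\rho$-equivariant functions), and then deduce $G$-invariance from $K(p_\circ,p_\circ)=\yek_\ccW$ together with the $(G,\rho)$-invariance of $K$, which is exactly the verification of \eqref{bK11=1} and the appeal to Remark \ref{GINVR} that you spell out.
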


\begin{proof}
This follows from standard facts in the theory of reproducing kernels. Therefore we will only sketch the argument. By the isomorphisms \eqref{GaMinfC-H} 
and \eqref{lbij*Ghol}
it suffices to construct $\ccH\sseq \mathcal O(\bS,\ccW)_\rho$.
Set
\[\ccH^\circ:=\spn_\C\{K_{p,w}\,:\,p\in \bS,w\in \ccW\}\sseq \mathcal O(\bS,\ccW)_\rho,
\] 
where $K_{p,w}(p'):=K(p',p)w$. 
We can define a pre-Hilbert structure on $\ccH^\circ$ by the relation \break 
$\lag K_{p,w},K_{p',w'}\rag:=\lag w,K(p,p')w'\rag$.
Let $\ccH$ denote the completion of $\ccH^\circ$. 
From \cite[Prop. I.1.9(iii)]{NeHo} it follows that $\ccH\sseq 
\mathcal O(\bS,\ccW)_\rho$. Continuity of point evaluations of $\ccH$ follows from
the equality $\lag f(p),w\rag=\lag f,K_{p,w}\rag$ for $f\in\ccH$, $p\in \bS$, 
and $w\in\ccW$ (see also \cite[Thm I.1.4]{NeHo}).
The latter equality also implies that $\ev_p^*w=K_{p,w}$, from which 
it follows that $\ev_p\circ\ev_{p'}^*=K(p,p')$.
Next note that 
$
\lag \ev_{p_\circ}^*w,\ev_{p_\circ}^*w'\rag=\lag w,K(p_\circ,p_\circ)w'\rag =\lag w,w'\rag
$, 
that is, $\ev_{p_\circ}^*$ is an isometry. Finally, the condition of 
Definition \ref{Ginvdfn}(ii) follows from $(G,\rho)$-invariance of $K$, which 
implies the $G$-invariance of the kernel $\breve K$ on $G \times G$.
\end{proof}

\begin{prp}
\label{patmston}
For every holomorphic representation $\rho:S\to\GL(\ccW)$, there exists at most one 
$G$-invariant Hilbert space 
$\ccH\sseq \cC^\infty(G,\ccW)_{\rho,\beta}$.
\end{prp}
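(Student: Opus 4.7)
The plan is to use the fact, already invoked implicitly in the proof of Proposition \ref{adprp-Gi-ii-iii}(v), that a holomorphic map on $\bS\times\oline{\bS}$ is uniquely determined by its values on the diagonal (Neeb's reference [Prop. A.III.7]{NeHo}). The strategy is to show that the reproducing kernel on $\bS\times\oline{\bS}$ of any $G$-invariant Hilbert space of the required type depends only on $\rho$ (and $\beta$), not on the Hilbert space itself; then standard reproducing kernel theory finishes the argument.

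More concretely, suppose $\ccH_1,\ccH_2\sseq \cC^\infty(G,\ccW)_{\rho,\beta}$ are two $G$-invariant Hilbert spaces with reproducing kernels $\breve K_1,\breve K_2:G\times G\to\rmB(\ccW)$ on $G$ and $K_1,K_2:\bS\times\oline{\bS}\to\rmB(\ccW)$ on $\bS$. First, I would apply Remark \ref{GINVR} to get $\breve K_i(\yek,\yek)=\yek_\ccW$ and the left-translation invariance $\breve K_i(g_\circ g,g_\circ g')=\breve K_i(g,g')$ for $i=1,2$. Setting $g'=g$ and $g_\circ=g^{-1}$ then yields $\breve K_i(g,g)=\yek_\ccW$ for every $g\in G$. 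Using the relation $K_i([g,s],[g',s'])=\rho(s^{-1})\breve K_i(g,g')\rho({s'}^{-1})^*$ recorded just after \eqref{RKMtM}, I would then compute on the diagonal
\[
K_i([g,s],[g,s])=\rho(s^{-1})\rho(s^{-1})^*
\quad\text{for every }[g,s]\in\bS,
\]
which depends neither on $i$ nor on the Hilbert space. Hence $K_1$ and $K_2$ agree on the diagonal of $\bS\times\oline{\bS}$.

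Next, by Proposition \ref{prpEVinBe}(iv) both $K_1$ and $K_2$ are holomorphic maps $\bS\times\oline{\bS}\to\rmB(\ccW)$. The uniqueness principle for holomorphic maps determined by their diagonal values (used in the proof of Proposition \ref{adprp-Gi-ii-iii}(v) via [Prop. A.III.7]{NeHo}) then gives $K_1=K_2$ on all of $\bS\times\oline{\bS}$, and equivalently $\breve K_1=\breve K_2$ on $G\times G$. Finally, the standard fact that a reproducing kernel Hilbert space is uniquely determined by its kernel (cited from [Lem. I.1.5]{NeHo} right before Definition \ref{Ginvdfn}) forces $\ccH_1=\ccH_2$.

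I do not expect serious obstacles: the $G$-invariance together with the normalization $\breve K(\yek,\yek)=\yek_\ccW$ pins down the diagonal, and holomorphicity of $K$ on $\bS\times\oline{\bS}$ does the rest. The mildly delicate point is to make sure that the diagonal computation actually uses only the $(G,\rho)$-invariance \eqref{srhogmK+} and the value of $\breve K$ at $(\yek,\yek)$, so that it is genuinely independent of the particular Hilbert space under consideration.
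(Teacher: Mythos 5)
Your proof is correct and follows essentially the same route as the paper: normalize the kernel at the base point, use $(G,\rho)$-invariance to determine $K$ on the diagonal of $\bS\times\oline{\bS}$, invoke Proposition \ref{prpEVinBe}(iv) together with the uniqueness of holomorphic maps given their diagonal values, and conclude via the fact that the reproducing kernel determines the Hilbert space. The only cosmetic difference is that you pin down the diagonal through $\breve K(g,g)=\yek_\ccW$ on $G$, whereas the paper applies $G$-invariance of $K$ directly to reduce to $K(p_\circ,p_\circ)=\yek_\ccW$; these are equivalent.
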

\begin{proof}
The Hilbert space $\ccH$ is uniquely determined by its reproducing kernel \cite[Lem. I.1.5]{NeHo}. 
By  
\eqref{GaMinfC-H} 
and \eqref{lbij*Ghol}
we can assume $\ccH\sseq \mathcal O(\bS,\ccW)_\rho$, and
it suffices to show that the corresponding $(G,\rho)$-invariant kernel 
$K:\bS\times \oline \bS\to\rmB(\ccW)$ is uniquely determined by $\rho$. 
Since $\ev_{p_\circ}^{}$ is an isometry, we have $K(p_\circ,p_\circ)=\yek_\ccW$.
From $(G,\rho)$-invariance we obtain 
\[ K([g,s], [g,s]) 
= \rho(s^{-1}) K(p_\circ,p_\circ) \rho(s^{-1})^*
= \rho(s^{-1}) \rho(s^{-1})^*\] 
 for every $g\in G,s \in S$.
Therefore $K(p,p)$ is uniquely determined by $\rho$
for every $p= [g,s]\in \bS$. 
From Proposition \ref{prpEVinBe}(iv), 
and the fact that every holomorphic map $\bS\times \oline \bS\to\rmB(\ccW)$ 
is uniquely determined by its diagonal values 
\cite[Prop. A.III.7]{NeHo}, it follows that
$K(p,p')$ is uniquely determined by $\rho$ for every $(p,p')\in \bS\times 
\oline \bS$. 
\end{proof}

\subsection{Arveson spectral subspaces}
\label{sec-arv}
We now recall some results from 
\cite[Appendix A]{Ne-real} on Arveson spectral subspaces. These spectral subspaces were originally introduced in \cite{Ar74}.
Let $V$ be a complete complex locally convex space. A strongly continuous representation
$\alpha:\R\to \mathrm{GL}(V)$ 
is called  \emph{equicontinuous} if, for every 
$\mathbf 0$-neighborhood $U_1\sseq V$, there exists a 
$\mathbf 0$-neighborhood $U_2\sseq V$ such that $\alpha(t)U_2\sseq U_1$ for every $t\in\R$. 
An equicontinuous representation $\alpha$ yields a representation $\alpha: L^1(\R)\to \mathrm B(V)$, given by $\alpha(f)v:=\int_{-\infty}^\infty f(t)\alpha(t)vdt$. 
For every $v\in V$, we set 
\[
I_\alpha(v):=\{f\in L^1(\R)\, :\, \alpha(f)v=0\}
\text{\,\ and\ \,} 
\Spec_\alpha(v):=\{t\in\R\, :\, \widehat f(t)=0\text{ for every }f\in I_\alpha(v)\},
\]
where
$\widehat f(x):=\int_{-\infty}^\infty f(t)e^{itx}dt$.
Similarly, we set 
\[I_\alpha(V):=\{f\in L^1(\R)\,:\,\alpha(f)=0\}\text{ and }
\Spec_\alpha(V):=\{t\in\R\, :\, \widehat f(t)=0\text{ for every }f\in I_\alpha(V)\}.
\]
For every $A\sseq \R$, we define the \emph{Arveson spectral subspace} 
\[
V(A):=V(\alpha,A):=\{v\in V\ :\ \Spec_\alpha(v)\sseq \oline{A}\},
\] where $\oline A$ denotes the closure of $A$. 
From \cite[Rem. A.6]{Ne-real}  it follows that the vector space 
$V(A)$ is a closed subspace of $V$ that is invariant under $\alpha(t)$ for every $t\in\R$
(see also \cite[p.\,225]{Ar74}). 
\begin{lem}
\label{fL1RCpid}
Let $(\pi,\ccH)$ be a smooth unitary representation of $G$. Let $x\in\g g$, and set 
$\pi_x(t):=\pi(e^{tx})$ for every $t\in\R$. Then 
$\pi_x(f)=\widehat{f}(-i\dd\pi(x))$ for every $f\in L^1(\R)$.
\end{lem}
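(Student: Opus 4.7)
The plan is to invoke Stone's theorem and reduce the identity to a Fubini exchange against a scalar spectral measure. Since $(\pi,\ccH)$ is unitary, $\pi_x:\R\to\mathrm U(\ccH)$ is a strongly continuous one-parameter unitary group, so by Stone's theorem there exists a self-adjoint operator $A$ on $\ccH$ with spectral resolution $dE(\lambda)$ satisfying $\pi_x(t)=e^{itA}$ for every $t\in\R$.

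The first step I would carry out is to identify $-i\dd\pi(x)$ with $A$. Differentiating the orbit map $t\mapsto\pi_x(t)v$ at $t=0$ for $v\in\ccH^\infty$ yields $\dd\pi(x)v=iAv$, so $-i\dd\pi(x)$ and $A$ agree on $\ccH^\infty$. Since $\pi$ is smooth, $\ccH^\infty$ is dense in $\ccH$ and invariant under each $\pi_x(t)$, and is therefore a core for $A$. In particular, $\widehat f(-i\dd\pi(x))$ is unambiguously interpreted via the Borel functional calculus of $A$ as the bounded operator $\int_\R \widehat f(\lambda)\,dE(\lambda)$; boundedness is automatic since $\widehat f\in L^\infty(\R)$.

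Next I would unfold the definition of the integrated representation and exchange the two integrals. For $f\in L^1(\R)$ and $v,w\in\ccH$, the complex Borel measure $\mu_{v,w}(\cdot):=\langle E(\cdot)v,w\rangle$ has total variation bounded by $\|v\|\|w\|$, and
\[
\langle\pi_x(f)v,w\rangle=\int_\R f(t)\langle\pi_x(t)v,w\rangle\,dt=\int_\R\!\int_\R f(t)\,e^{it\lambda}\,d\mu_{v,w}(\lambda)\,dt.
\]
Because $|f(t)e^{it\lambda}|=|f(t)|$ is jointly integrable with respect to the product of Lebesgue measure and $|\mu_{v,w}|$, Fubini's theorem lets us swap the order of integration, giving
\[
\langle\pi_x(f)v,w\rangle=\int_\R\widehat f(\lambda)\,d\mu_{v,w}(\lambda)=\langle\widehat f(A)v,w\rangle.
\]
Since $v$ and $w$ are arbitrary, we conclude $\pi_x(f)=\widehat f(A)=\widehat f(-i\dd\pi(x))$.

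The main (mild) obstacle is the first step: ensuring that $-i\dd\pi(x)$ is essentially self-adjoint on $\ccH^\infty$, so the right-hand side makes sense as stated. This reduces to showing that $\ccH^\infty$ is a $\pi_x(\R)$-invariant dense subspace of $\mathrm{Dom}(A)$, which follows from the smoothness of $\pi$ together with the standard criterion that a dense invariant subspace contained in $\mathrm{Dom}(A)$ is automatically a core. Everything else is routine vector-valued integration against a spectral measure.
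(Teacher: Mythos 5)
Your proof is correct and follows essentially the same route as the paper: expand $\langle\pi_x(f)v,w\rangle$ against the scalar spectral measures of the generator and exchange the integrals by Fubini, noting $\widehat f\in L^\infty(\R)$ for boundedness. The only addition is your care in identifying $-i\dd\pi(x)$ with the Stone generator via the invariant-core argument on $\ccH^\infty$; the paper takes this for granted (with its definition of $\dd\pi(x)$ on the set where the limit exists, $-i\dd\pi(x)$ is exactly the Stone generator), so this is a harmless refinement rather than a different method.
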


\begin{proof}
Let $\mathfrak B(\R)$ denote the $\sigma$-algebra of Borel subsets of $\R$, and $\mathrm E:\mathfrak B(\R)\to \mathrm B(\ccH)$ denote the spectral measure associated to the self-adjoint operator $-i\dd\pi(x)$.
Since $\widehat f\in L^\infty(\R)$, the operator 
$\widehat f(-i\dd\pi(x))$ is bounded.
Fix $v,v'\in\ccH$ and set $\mathrm E^{v,v'}_\Omega:=\lag \mathrm E_\Omega^{} v,v'\rag$ for every $\Omega\in\mathfrak B(\R)$. Then,
\begin{align*}
\lag \pi_{x}^{}(f)v,v'\rag
&=\int_{-\infty}^\infty f(t)\lag \pi(e^{tx})v,v' \rag dt
=\int_{-\infty}^\infty\int_{-\infty}^\infty 
f(t)e^{itu}dt\, d\mathrm E^{v,v'}(u)=\lag 
\widehat f(-i\dd\pi(x))v,v'\rag. \qedhere
\end{align*} 

\end{proof}

\begin{prp} 
\label{p(i)-(iv)}
 Let 
$(\pi,\ccH)$ be a smooth unitary representation of $G$ and
$x\in\g g$.
Set  $\pi_x(t):=\pi(e^{tx})$ and
 $\alpha(t):=\Ad(e^{tx})\in\GL(\gC)$ 
  for every $t\in\R$.  
Assume that $\alpha$ is equicontinuous. Then the following statements hold.
\begin{itemize}
\item[{\upshape (i)}]  $\pi_x(f)\ccH^\infty\sseq\ccH^\infty$ for every $f\in L^1(\R)$.
Moreover, 
\[
\dd\pi(y)\pi_x(f)v=\int_{-\infty}^\infty f(t)\pi(e^{tx})\dd\pi(\Ad(e^{-tx})y)v\, dt
\text{\,\ for }y\in\gC,\ f\in L^1(\R),\text{ and }v\in\ccH^\infty.
\]

\item[{\upshape (ii)}] 
Set $\ccH^\infty(X):=\ccH(\pi_x,X)\cap \ccH^\infty$
for every  $X\sseq \R$. Then,
\[
\dd\pi\big(\gC(\alpha,A)\big)\ccH^\infty(B)\sseq 
\ccH^\infty(A+B)
\text{ for every $A,B\sseq \R$}.
\]
\item[{\upshape(iii)}] 
Let $\mathrm E:\mathfrak B(\R)\to \mathrm B(\ccH)$ denote the spectral measure associated to the self-adjoint operator $-i\dd\pi(x)$, where
$\mathfrak B(\R)$ is the $\sigma$-algebra of Borel subsets of $\R$.
Let $I\sseq \R$ be an open interval. Then  
$\ccH^\infty\cap \mathrm E_I\ccH$ 
is dense in $\mathrm E_I^{}\ccH$.

\item[{\upshape (iv)}] 
Assume that $x\in\g h$, and that $(\pi,\ccH)$ is obtained,
according to {\rm Proposition \ref{prpEVinBe}}, from 
an embedding $\ccH\into\cC^\infty(G,\ccW)_{\rho,\beta}$
as a $G$-invariant Hilbert space, where $\rho:S\to \GL(\ccW)$ is a homomorphism of complex Lie groups. 
Furthermore, suppose that  there exists an 
$\Ad(H)$-invariant decomposition
$
\gC=\oline{\g n}\oplus\hC\oplus\g n
$ 
such that $\g q=\hC\oplus \g n$, where
$\g q$ is the Lie algebra defined in \eqref{eq:c-str}, and $\oline{\g n}\sseq \ker(\beta)$.
Set $\ccW^\ev:=\ev_\yek^*\ccW$.
Then, 
\[
\Spec_{\pi_x}(\ccH)\sseq \oline{\Spec_{\pi_x}(\ccW^\ev)+\N_0\Spec_{\alpha}(\oline{\g n})},
\] where $\N_0:=\N\cup\{0\}$, and the right hand side
denotes the closure of 
${\Spec_{\pi_x}(\ccW^\ev)+\N_0\Spec_{\alpha}(\oline{\g n})}$.

\end{itemize}
\end{prp}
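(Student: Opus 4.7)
For (i), the plan is to show smoothness of $\pi_x(f)v$ for $v \in \ccH^\infty$ by writing the orbit map $g \mapsto \pi(g)\pi_x(f)v$ as the vector-valued integral $\int f(t)\pi(ge^{tx})v\,dt$ and differentiating under the integral. Using the covariance identity $\pi(e^{sy})\pi(e^{tx}) = \pi(e^{tx})\pi(e^{s\Ad(e^{-tx})y})$ and differentiating in $s$ at $s = 0$ produces the asserted formula. Equicontinuity of $\alpha$ keeps $\Ad(e^{-tx})y$ in a bounded set, so the integrand is uniformly dominated and the interchange of $s$-derivative with the $t$-integral is justified; iterating yields derivatives of all orders.

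For (ii), combining (i) with the above covariance gives $\pi_x(t)\dd\pi(y)v = \dd\pi(\alpha(t)y)\pi_x(t)v$. I would then carry out the standard bilinear Arveson-spectrum estimate: given $f \in L^1(\R)$ with $\widehat f$ vanishing on a neighborhood of $\oline{A+B}$, a partition-of-unity argument together with Fubini on the double integral $\pi_x(f)\dd\pi(y)v$ reduces to the defining relations of $I_\alpha$ on $y$ and $I_{\pi_x}$ on $v$, giving $\pi_x(f)\dd\pi(y)v = 0$, in the style of \cite[App.~A]{Ne-real}.

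For (iii), I would pick $f_n \in L^1(\R)$ with $\widehat{f_n}$ compactly supported in $I$ and converging boundedly and pointwise to $1_I$. By Lemma \ref{fL1RCpid}, $\pi_x(f_n) = \widehat{f_n}(-i\dd\pi(x))$, so the spectral theorem gives $\pi_x(f_n) \to \mathrm E_I$ strongly. Given $v \in \mathrm E_I\ccH$ and $v_k \in \ccH^\infty$ approximating $v$, part (i) places $\pi_x(f_n)v_k$ in $\ccH^\infty$, and its spectral support places it in $\mathrm E_I\ccH$; letting $n, k \to \infty$ finishes the density argument.

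The main step, and the expected main obstacle, is (iv). By Proposition \ref{adprp-Gi-ii-iii}(iii), $\Uu(\oline{\g n})\ccW^\ev$ is dense in $\ccH$, so it suffices to show that every typical element $\dd\pi(y_1)\cdots \dd\pi(y_n)w$, with $y_i \in \oline{\g n}$ and $w \in \ccW^\ev$, lies in the closed Arveson subspace $\ccH(\pi_x, \N_0\Spec_\alpha(\oline{\g n}) + \Spec_{\pi_x}(\ccW^\ev))$. Two preliminary inclusions feed into this. First, because $x \in \g h$ and $\ccW^\ev$ is $H$-invariant (Proposition \ref{prpEVinBe}(ii)), the subspace $\ccW^\ev$ is $\pi_x$-invariant, so the inclusion of annihilator ideals yields $\Spec_{\pi_x}(w) \sseq \Spec_{\pi_x}(\ccW^\ev)$, i.e.\ $w \in \ccH^\infty(\Spec_{\pi_x}(\ccW^\ev))$. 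Second, the same annihilator-ideal comparison gives $y_i \in \gC(\alpha, \Spec_\alpha(\oline{\g n}))$. An $n$-fold iteration of (ii) then places $\dd\pi(y_1)\cdots \dd\pi(y_n)w$ in $\ccH^\infty(n\Spec_\alpha(\oline{\g n}) + \Spec_{\pi_x}(\ccW^\ev)) \sseq \ccH(\pi_x, \N_0\Spec_\alpha(\oline{\g n}) + \Spec_{\pi_x}(\ccW^\ev))$. Since Arveson spectral subspaces are closed, density upgrades this to equality with $\ccH$, which is equivalent to the stated spectrum bound. The principal difficulty is the spectral bookkeeping: verifying that individual vectors $y \in \oline{\g n}$ inherit the spectrum of the ambient subspace, and that the iteration through (ii) respects closures cleanly enough to produce the $\N_0$-sum rather than a finite sum.
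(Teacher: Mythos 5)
Your proposal is correct and follows essentially the same route as the paper: parts (iii) and (iv) match the paper's own arguments (approximating $\mathrm E_I$ by $\pi_x(f_n)$ with $\widehat{f_n}$ a bump supported in $I$, and iterating (ii) on the dense subspace $\Uu(\oline{\g n})\ccW^\ev$ furnished by Proposition \ref{adprp-Gi-ii-iii}(iii), then using closedness of the Arveson spectral subspace). For (i) and (ii) the paper simply cites \cite[Thms.~2.3, 3.1]{NSZ14}, and your sketches are exactly the standard arguments underlying those citations, so there is no substantive divergence.
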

\begin{proof}
(i) This is a special case of \cite[Thm 2.3]{NSZ14}.

(ii) This is a special case of \cite[Thm 3.1]{NSZ14}.

(iii) Fix $v\in\mathrm E_I\ccH$ and $\eps>0$. 
Choose $w\in \ccH^\infty$ such that $\|v-w\|<\eps$. Choose compact intervals $I_1\sseq I_2\sseq \cdots$ such that $I=\bigcup_{n=1}^\infty I_n$, and a sequence  
$(f_n)_{n \in \N}$ 
in $\cC^\infty_c(\R)$ such that $0\leq f_n(x)\leq 1$, $f_n(x)=1$ for all $x\in I_n$, and  $\mathrm{supp}(f_n)\sseq I$. Let $h_1,h_2,\ldots$ be Schwartz functions such that 
$\widehat h_n=f_n$ for every $n\geq 1$. 
Set $w_1:=f_n(-i\dd \pi(x))w$. 
From (i) and Lemma \ref{fL1RCpid} it follows that 
$w_1=\pi_{x}^{}(h_n)w\in \ccH^\infty$.
Let $\chi_I^{}$ denote the characteristic function of $I$, so that 
 $f_n=\chi_I^{} f_n$. Then 
$\mathrm E_I^{}=\chi_I^{}(-i\dd \pi(x))$, 
and therefore
$
w_1=f_n(-i\dd \pi(x))w
=
\chi_I^{}(-i\dd \pi(x))
f_n(-i\dd \pi(x))w
\in\mathrm E_I^{}\ccH
$.
Moreover, $\|(f_n-\chi_I^{})(-i\dd \pi(\bd_0))w\|\leq \| \mathrm E_{I\bls I_n}w\|$ (see \cite[Thm XII.2.6(c)]{DunSch}), and consequently, 
\begin{align*}
\|v-w_1\|&\leq \|\mathrm E_I^{}v-\mathrm E_I^{}w\|+
\|\mathrm E_Iw-w_1\|\leq \|v-w\|+
\| \mathrm E_{I\bls I_n}w\|.
\end{align*}
Finally, since $I=\bigcup I_n$, we obtain 
$
\lim_{n\to\infty}\| \mathrm E_{I\bls I_n}w\|= 0$.

(iv) Set $A:=\Spec_\alpha(\oline{\g n})$ and $B:=\oline{\Spec_{\pi_x}(\ccW^\ev)+\N_0\Spec_{\alpha}(\oline{\g n})}$. Then $A+B\sseq B$, so that (ii) implies that $\Uu(\oline{\g n})\ccW^\ev
\sseq \ccH(\pi_x,B)$, where
$\Uu(\oline{\g n})\ccW^\ev:=
\{\dd\pi(x)w\,:\,x\in \Uu(\oline{\g n})\text{ and }w\in\ccW^\ev\}$. By 
Proposition \ref{adprp-Gi-ii-iii}(iii), 
$\Uu(\oline{\g n})\ccW^\ev$ is a dense subspace of $\ccH$. Therefore  $\ccH(\pi_x,B)$ is also a dense subspace of $\ccH$. Since $\ccH(\pi_x,B)$ is also closed in $\ccH$, we obtain $\ccH=\ccH(\pi_x,B)$. 
\end{proof}

\section{The Virasoro group}
\label{sec-ViraS}
In this section we use the general results of 
Section \ref{sec-cplxstr} in the special case of the Virasoro group.
 Let $\mathrm{Diff}(S^1)_+$ denote the 
group of orientation-preserving smooth diffeomorphisms of the circle. As shown in 
\cite[Ex. 4.4.6]{Ha82}, $\mathrm{Diff}(S^1)_+$ is a smooth Fr\'echet--Lie group. 
Note that $\pi_1(\mathrm{Diff}(S^1)_+)\cong \Z$, and the simply connected covering of $\mathrm{Diff}(S^1)_+$ is
\[
\Diff_+:=\{\phi\in \cC^\infty(\R,\R)\,\ :\,\phi(t+2\pi)=\phi(t)+2\pi,\text{ and }\phi'(t)>0\text{ for every }t\in\R\}.
\] 

From now on $G$ will denote the simply connected Virasoro group $\mathsf{Vir}$, that is,  the central extension of $\Diff_+$ by $\R$ whose multiplication is defined by
\[
(\phi,a)(\psi,b):=(\phi\circ\psi,a+b+\cycl(\phi,\psi))\textrm{ \,for every }\phi,\psi\in\Diff_+ 
\text{ and every }a,b\in\R,
\] 
where $\cycl(\phi,\psi)$ is the Bott cocycle $
\cycl(\phi,\psi):=\frac{1}{2}\int_0^{2\pi}\log (\phi\circ\psi)'d\log\psi'$.
The center of $G$ is 
\[
Z(G):=\{(\phi_n,a)\in G\,:\, n\in \Z\text{ and }a\in\R\}\cong \Z\times \R,
\]
where $\phi_n(x):=x+2\pi n$ for every $x\in \R$.
Let $\g g:=\Lie(G)$. We can represent every element of $x\in\gC$ by 
an infinite series $x=z\vecc+\sum_{n\in\Z}a_n \bd_n$, where $z,a_n\in\C$ and
\[
\|x\|_k:=|z|+\sum_{n\in \Z}(|n|^k+1)|a_n|<\infty
\text{ for every }k \in \N.
\]
Here $\bd_n:=ie^{int}\frac{d}{dt}$, and $\vecc\in Z(\g g)$ is chosen such that the Lie bracket of $\gC$ is given by
\[
[\bd_m,\bd_n]=(n-m)\bd_{m+n}+2\pi im^3 \delta^{}_{m+n,0}\vecc\text{ \,for every }m,n\in\Z.
\]
Note that in the above formula, the choice of the cocycle is different from 
\eqref{cocycle-for}
which is more commonly used in the literature, but this will not have a significant effect on our presentation. 
Observe that 
$\g g=\{x\in\gC\, :\, \oline{x}=x\}$, where the complex conjugation $x\mapsto \oline{x}$ is uniquely specified by $\oline{\vecc}=\vecc$ and $\oline{\bd}_n=-\bd_{-n}$.
Moreover, $e^{-2\pi n i\bd_0}=(\phi_n,0)\in Z(G)$ for every $n\in\Z$. 

From now on, let $H$ denote the connected Lie subgroup of $G$ which corresponds to the 
Lie subalgebra 
$
\g h:=\R\vecc\oplus \R i\bd_0\sseq \g g
$.
By a well-known result of Kirillov and Yuriev \cite{KiYu}, the homogeneous manifold $M:=G/H$ is 
a complex manifold 
(diffeomorphic to a complex Fr\'echet domain) with a  $G$-invariant 
complex structure  
which corresponds to the Lie subalgebra 
$\g q=\g h_\C^{}\oplus\g n\sseq \gC$,  where
\[
\g q:=\left\{x\in\gC\ :\ x=z\vecc+\sum_{n=0}^\infty a_n\bd_n,\ \ 
z,a_n\in\C
\right\}
\text{ \,and \,}
\g n:=\left\{
x\in\gC\ :\ x=\sum_{n=1}^\infty a_{n}\bd_{n},\ \  a_n\in\C
\right\}.
\]

\subsection{Left invariant complex structures on the Virasoro group}

Following \cite{lempert}, we define a one-parameter family of (integrable) left-invariant almost complex structures on $G$. Namely, for every  $\tau\in\C\setminus\R$ we write 
$
T^\C_\yek G=\g g^{1,0}_\tau\oplus \g g^{0,1}_\tau,
$ 
where
\[
\g g^{1,0}_\tau:=\left\{
x\in\gC\,:\ x=\tau a_0\vecc+
\sum_{n=0}^\infty a_n\bd_n
\right\}\,\text{ and }\,
\g g^{0,1}_\tau=\left\{
x\in\gC\,:\,x=\oline\tau a_0\vecc+
\sum_{n=0}^{\infty} a_{-n}\bd_{-n}
\right\}.
\]
The canonical identification 
$T^{}_\yek G\cong T_\yek^\C G/\g g^{0,1}_\tau$ yields a  left-invariant almost complex structure on $G$.
This almost complex structure 
induces an almost complex structure on $H$ corresponding to the direct sum decomposition 
$\hC=\g h^{1,0}_\tau\oplus\g h^{0,1}_\tau$,
where $\g h^{1,0}_\tau:=\hC\cap \g g^{1,0}_\tau$ and 
$\g h^{0,1}_\tau:=\hC\cap \g g^{0,1}_\tau$.
From now on, to specify the almost complex structures chosen on $G$ and $H$, we write $G_\tau$ and $H_\tau$.
Note that $H_\tau\cong \C$ as complex Lie groups.

\begin{prp}
\label{kiriYur}
For every $\tau\in\C\bls \R$, the almost complex structure of $G_\tau$ is integrable. The complex manifold $G_\tau$ is a 
holomorphic principal $H_\tau$-bundle  over $M$ with a global trivialization.  
\end{prp}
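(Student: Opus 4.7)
The plan is to establish integrability of the almost complex structure on $G_\tau$ first, and then separately the holomorphic principal bundle structure together with the global trivialization.

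\textbf{Integrability.} I would verify formal integrability by a direct computation showing that $\g g^{0,1}_\tau$ is closed under the Lie bracket of $\gC$. Since $\g g^{0,1}_\tau$ is topologically spanned by $\oline\tau\vecc+\bd_0$ and $\{\bd_{-n}\}_{n\geq 1}$, this reduces via the formula $[\bd_m,\bd_n]=(n-m)\bd_{m+n}+2\pi im^3\delta_{m+n,0}\vecc$ to the observation that for $m,n\leq 0$ not both zero one has $m+n<0$, so no central term appears and the result remains in $\g g^{0,1}_\tau$; formal integrability on all of $G_\tau$ follows by left-invariance. To promote this to a genuine complex manifold structure, where the Newlander--Nirenberg theorem is unavailable in infinite dimensions, I would invoke Lempert's theorem \cite{lempert}.

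\textbf{Holomorphic principal bundle.} I would apply Theorem \ref{thm-VGyekL} with $S:=H_\tau$ and $\gamma:=\id_H$, identifying $\bS=G\times_\gamma H_\tau$ with $G_\tau$ via $[g,h]\mapsto gh$. The continuous Lie algebra homomorphism $\beta:\oline{\g q}\to\g h$ required by Proposition \ref{galcpx} is defined to vanish on $\oline{\g n}$ and to agree with $\id_{\g h}$ on $\g h\subseteq\hC$, extended $\C$-linearly with respect to $J_\yek$; the hypotheses of Proposition \ref{galcpx} then hold, with the $\Ad(H)$-equivariance being automatic since $H$ is abelian. The inclusion $\g g^{0,1}_\tau\subseteq\oline{\g q}$, immediate from the explicit descriptions, shows that the projection $G_\tau\to M$ is holomorphic; the elementary computation $\Ad(e^{ti\bd_0})\bd_n=e^{int}\bd_n$ (together with the $\Ad$-triviality of $\vecc$) shows that $\Ad(H)$ preserves $\g g^{1,0}_\tau$, ensuring that the right action of $H_\tau$ on $G_\tau$ is holomorphic. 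Finally, condition (ii) of Theorem \ref{thm-VGyekL} follows from integrability: the fiber coordinate in a holomorphic chart of $G_\tau$ near $\yek$ produces a smooth $f:V\to H_\tau$ satisfying $L_xf(g)=-\beta(x)\cdot f(g)$ for $x\in\oline{\g q}$.

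\textbf{Global trivialization.} This is the subtlest part and I expect it to be the main obstacle. I would produce a global holomorphic section $\sigma:M\to G_\tau$, from which the trivialization $M\times H_\tau\to G_\tau$, $(m,h)\mapsto\sigma(m)h$, follows automatically. The global section would be constructed by leveraging Lempert's explicit analysis of the complex structure on $G_\tau$ in \cite{lempert}, in conjunction with the Kirillov--Yuriev realization of $M$ as a Fr\'echet domain, to propagate the local section provided by Theorem \ref{thm-VGyekL} to a single-valued holomorphic section defined over all of $M$. The coherence of this propagation ultimately rests on the very specific geometry of the Virasoro group uncovered by Lempert, and is the point at which the generalities of Section \ref{sec-cplxstr} must be supplemented with input particular to $\mathsf{Vir}$.
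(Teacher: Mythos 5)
Your first paragraph is in line with the paper: since formal integrability proves nothing in infinite dimensions, genuine integrability must be (and in the paper is) taken from Lempert. The trouble starts in your second paragraph, at the step ``condition (ii) of Theorem \ref{thm-VGyekL} follows from integrability: the fiber coordinate in a holomorphic chart of $G_\tau$ near $\yek$ produces a smooth $f:V\to H_\tau$ satisfying $L_xf(g)=-\beta(x)\cdot f(g)$.'' A chart of the complex manifold $G_\tau$ at $\yek$ is merely a biholomorphism onto an open subset of a Fr\'echet space; it carries no preferred fiber coordinate adapted to the fibration $G_\tau\to M$, and in the Fr\'echet setting there is no implicit function theorem that would let you split a holomorphic submersion locally. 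The existence of a local holomorphic section (equivalently, of the map $f$ in Theorem \ref{thm-VGyekL}(ii)) is exactly the nontrivial input, and it does not follow from abstract integrability of the total space; in this paper it is extracted from Lempert's description, not deduced from generalities. (There is also a smaller unverified point: that the left-invariant complex structure on $G_\tau$ coincides with the almost complex structure attached by Proposition \ref{galcpx} to your $\beta$; compare the identification $G_\tau\cong \bS_\tau$ used in Proposition \ref{acpGgambet}.)

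The global trivialization, which you yourself flag as the main obstacle, is simply not proved: ``propagating'' a local holomorphic section over all of $M$ is a hope, not an argument --- a holomorphic principal bundle admitting local holomorphic sections need not be globally trivial, and nothing in Section \ref{sec-cplxstr} produces a global section. The paper's proof is much shorter and of a different nature: the entire statement --- integrability of the left-invariant structure and holomorphic global triviality of the resulting principal bundle over the Kirillov--Yuriev domain --- is the content of \cite[Thm 4.2]{lempert}, proved there for $G/D$ with $D\sseq Z(G)$ the discrete subgroup generated by $(\phi_1,0)$; one then passes to the simply connected group $G$ by pulling back along the covering $G\to G/D$ (pullbacks of integrable complex structures along coverings are integrable, and the trivializing section lifts because $M$ is simply connected). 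Your proposal never addresses this covering issue, and more importantly it leaves both the local section needed for Theorem \ref{thm-VGyekL}(ii) and the global triviality unsubstantiated, whereas the paper obtains both directly from Lempert's theorem.
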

\begin{proof}
This is proved in \cite[Thm 4.2]{lempert}. Actually Lempert gives the proof for $G/D$ where $D\sseq Z(G)$ is the discrete subgroup of $G$ generated by $(\phi_1,0)$.
Since the pullback of an integrable complex structure to a covering manifold 
is also integrable, it follows that 
Lempert's result also holds for the covering group $G$.
\end{proof}

\subsection{The universal  bundle $\bS_u$ and the associated vector bundles $\W_\rho$}
\label{ssec-univ}
Let $H_\C\cong \C^2$ denote the complexification of $H$ and $\gamma_u:H\to H_\C$ denote the canonical embedding, so that $\dd\gamma_u:\g h\into\g h_\C^{}$ is the canonical injection. 
Set $\bS_u:=G\times_{\gamma_u^{}} H_\C$, where the right hand side is the quotient of the action defined in \eqref{gs.h}.
We equip $\bS_u$ with the almost complex structure that corresponds, 
according to Proposition \ref{galcpx},
to the canonical projection $\beta_u:\oline{\g q}\to\g h_\C^{}$.

\begin{prp}
\label{acpGgambet}
$\bS_u$ is a holomorphic principal $H_\C$-bundle over $M$ with a global trivialization.
\end{prp}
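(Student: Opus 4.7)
The plan is to apply Theorem~\ref{thm-VGyekL} by constructing a smooth map $f:G\to H_\C$ defined on all of $G$ which satisfies both the equivariance $f(gh)=\gamma_u(h)^{-1}f(g)$ and the PDE $L_xf(g)=-\beta_u(x)\cdot f(g)$ for every $x\in\oline{\g q}$ and $g\in G$.  Such a global $f$ simultaneously verifies condition (ii) of the theorem (making $\bS_u$ a holomorphic principal $H_\C$-bundle) and produces a holomorphic section $\widetilde f(gH)=[g,f(g)]$ defined over all of $M$, hence a global trivialization.  Along the way I would check the hypotheses of Proposition~\ref{galcpx} for $\beta_u$: the decomposition $\oline{\g q}=\hC\oplus\oline{\g n}$, in which $\oline{\g n}$ is a Lie ideal (both $[\hC,\oline{\g n}]\sseq\oline{\g n}$ and $[\oline{\g n},\oline{\g n}]\sseq\oline{\g n}$), realizes $\beta_u$ as a continuous Lie algebra homomorphism extending $\dd\gamma_u$, and $\Ad(H)$-equivariance is automatic because $H_\C$ is abelian and $\Ad(H)$ acts trivially on $\hC$ (since $\vecc$ is central and $[\bd_0,\bd_0]=0$).

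To build $f$, I would invoke Proposition~\ref{kiriYur} for \emph{both} $\tau$ and $\oline\tau$, for any fixed $\tau\in\C\setminus\R$.  This yields smooth $H$-equivariant maps $\phi_\tau,\phi_{\oline\tau}:G\to H$, holomorphic respectively as $G_\tau\to H_\tau$ and $G_{\oline\tau}\to H_{\oline\tau}$.  Identifying the abelian, simply connected complex group $H_\C$ with $(\hC,+)$ and using the decomposition $\hC=\g h^{1,0}_\tau\oplus\g h^{0,1}_\tau$ (valid because $\tau\vecc+\bd_0$ and $\oline\tau\vecc+\bd_0$ are linearly independent whenever $\tau\notin\R$), let $\pi^{1,0}_\tau$ and $\pi^{0,1}_\tau$ denote the corresponding $\C$-linear projections.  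I would set
\[
f(g):=\pi^{1,0}_\tau(\phi_\tau(g))+\pi^{0,1}_\tau(\phi_{\oline\tau}(g)),
\]
interpreting $\phi_\tau(g),\phi_{\oline\tau}(g)$ as elements of $\g h\sseq\hC$.  Equivariance of $f$ is immediate from that of each $\phi_\sigma$ together with $\pi^{1,0}_\tau+\pi^{0,1}_\tau=\id_{\hC}$.  The PDE verification splits along $\oline{\g q}=\hC\oplus\oline{\g n}$: for $x\in\hC$, differentiating the equivariance of $\phi_\sigma$ gives $L_x\phi_\sigma(g)=-x$, hence $L_xf(g)=-x=-\beta_u(x)$; for $x\in\oline{\g n}$, the inclusions $\oline{\g n}\sseq\g g^{0,1}_\tau$ and $\oline{\g n}\sseq\g g^{0,1}_{\oline\tau}$ combined with holomorphy force $L_x\phi_\tau(g)\in\g h^{0,1}_\tau$ and $L_x\phi_{\oline\tau}(g)\in\g h^{0,1}_{\oline\tau}=\g h^{1,0}_\tau$, which are annihilated by $\pi^{1,0}_\tau$ and $\pi^{0,1}_\tau$ respectively, yielding $L_xf(g)=0=-\beta_u(x)$.

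The main obstacle is identifying the correct recipe for $f$: a naive single-trivialization candidate such as $\iota\circ\phi_\tau$ fails because, for $x\in\oline{\g n}$, holomorphy of $\phi_\tau$ only pins $L_x\phi_\tau$ to the line $\g h^{0,1}_\tau$ rather than forcing it to vanish in $\hC$.  The key insight is that the conjugate parameters $\tau$ and $\oline\tau$ give rise to two transverse $(0,1)$-lines $\g h^{0,1}_\tau$ and $\g h^{0,1}_{\oline\tau}=\g h^{1,0}_\tau$ that together span $\hC$; blending two Lempert trivializations and projecting along this transverse decomposition turns each ``wrong'' derivative into a summand that is killed by the opposite projection.
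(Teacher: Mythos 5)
Your proposal is correct and follows essentially the same route as the paper: there, too, one glues two Lempert trivializations, for two distinct parameters $\tau_1\neq\tau_2$, into a single equivariant map $G\to H_\C$ via the isomorphism $H_\C\cong H_{\tau_1}\times H_{\tau_2}$, $h\mapsto (hH^{0,1}_{\tau_1},hH^{0,1}_{\tau_2})$, and then concludes with Theorem~\ref{thm-VGyekL}. Your choice $\tau_2=\oline{\tau}$ together with the projections $\pi^{1,0}_\tau,\pi^{0,1}_\tau$ is just an additive coordinate version of that isomorphism; the only cosmetic difference is that the paper verifies the PDE for each fiber coordinate $s_{\tau_i}$ through the auxiliary bundles $\bS_{\tau_i}$ and Lemma~\ref{lem-schol} (with $\beta_{\tau_i}=\mathsf{p}_{\tau_i}\circ\beta_u$) rather than directly from holomorphy of the trivializing maps as you do.
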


\begin{proof}
Fix $\tau\in\C\bls\R$ and let  $\beta_\tau:\oline{\g q}\to \g h_\tau$ be defined by $\beta_\tau:=\mathsf p_\tau\circ \beta_u$, where $\mathsf p_\tau:\hC\to\hC/\g h_{\tau}^{0,1}\cong \g h_\tau$ is the canonical projection. Set 
$\bS_\tau:=G\times_{\gamma_\tau^{}}H_\tau$, 
where $\gamma_\tau^{}:H\to H_\tau$ is the identity map. We equip $\bS_\tau$ with the almost complex structure that corresponds, according to Proposition \ref{galcpx}, to $\beta_\tau$. 
Let $\widetilde s_\tau:M\to G_\tau$ be a holomorphic global section for the trivial bundle $G_\tau\to M$, and set 
\[
s_\tau:G\to H_\tau\ ,\ s_\tau(g):=g^{-1}\widetilde s_\tau(gH).
\]
The map $G_\tau\to \bS_\tau$ given by $g\mapsto [g,\yek]$ is a holomorphic diffeomorphism. Therefore Lemma 
\ref{lem-schol}
implies that 
\begin{equation}
\label{L-xstaug)}
L_x s_\tau(g)=-\beta_\tau(x)\cdot s_\tau(g)
\text{ for every }x\in\oline{\g q}
\text{ and every }g\in G.
\end{equation}
For every $\tau\in\C\bls\R$, let $H_\tau^{0,1}$ denote the Lie subgroup of $H_\C$ corresponding to the  Lie subalgebra $\g h_\tau^{0,1}$. 
Now fix $\tau_1,\tau_2\in\C\setminus\R$ such that $\tau_1\neq\tau_2$.
The map
$H_\C\to H_{\tau_1^{}}^{}\times H_{\tau_2^{}}^{}$, 
$h\mapsto (h H_{\tau_1^{}}^{0,1},h H_{\tau_2^{}}^{0,1})$ 
is an isomorphism of complex Lie groups.
The map
\[
s:G\to H_\C\ ,\ 
s(g):=(s_{\tau_1}^{}(g),s_{\tau_2}^{}(g))
\in H_{\tau_1^{}}\times H_{\tau_2^{}}\cong H_\C
\]
is smooth, and from \eqref{L-xstaug)} it follows that
$L_xs(g)=-\beta_u(x)\cdot s(g)$ for every $x\in\oline{\g q}$ and every $g\in G$.  
Therefore the proposition follows from Theorem \ref{thm-VGyekL}.
\end{proof}
We now need the following important observation.
Let $\rho:H\to \Uu(\ccW)$ be a Lie group homomorphism.  
From \cite[Thm III.1.5]{Ne06} it follows that 
$\rho$ can be extended uniquely to a homomorphism of 
complex Lie groups $\rho:H_\C\to\GL(\ccW)$.
Therefore the construction of Section \ref{sec-hvebun}
makes  $\W_\rho:=G\times_{\rho\circ\gamma_u}\ccW$
a holomorphic vector bundle over $M$. From now on, we assume that $\W_\rho$ is equipped with the latter holomorphic vector bundle structure.




\subsection{Positive energy representations}
Let $(\pi,\ccH)$ be a smooth unitary representation of 
the simply connected 
Virasoro group $G$. The unbounded self-adjoint operator
$-\dd\pi(\bd_0)$ is usually called the 
\emph{energy operator} of $(\pi,\ccH)$.
\begin{dfn}
\label{pos-energy-dfeen}
A smooth unitary representation  $(\pi,\ccH)$ 
of $G$ is said to be a  
\emph{positive energy}  representation if 
$
\mathrm{Spec}(-\dd\pi(\bd_0))\sseq\R^+\cup\{0\}
$.
\end{dfn}

\begin{prp}
\label{prpPoScyq}
Let $(\pi,\ccH)$ be a positive energy representation of $G$, and set
$\ccW^\pi:=\oline{(\ccH^\infty)^{\g n}}$. Then
$\ccW^\pi$ is $H$-invariant, and  $\pi(G)\ccW^\pi:=\spn_\C\{\pi(g)w\,:\,g\in G\text{ and }w\in\ccW^\pi\}$ is dense in $\ccH$.
\end{prp}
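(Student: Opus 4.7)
My plan is to handle the two assertions separately, with the density being the substantive part. First, for $H$-invariance, I will observe that $\g n$ is $\Ad(H)$-stable: $\vecc$ is central, and from $[\bd_0,\bd_n]=n\bd_n$ one obtains $\ad(i\bd_0)\bd_n=in\bd_n$, so $\Ad(e^{ti\bd_0})\bd_n=e^{itn}\bd_n\in\g n$. Combined with $\pi(H)\ccH^\infty\sseq\ccH^\infty$ and the intertwining $\dd\pi(x)\pi(h)=\pi(h)\dd\pi(\Ad(h^{-1})x)$, this will give $\pi(H)(\ccH^\infty)^{\g n}\sseq(\ccH^\infty)^{\g n}$, and $H$-invariance of the closure $\ccW^\pi$ follows.

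For density, I will reduce to the key lemma that every nonzero smooth positive energy representation of $G$ contains a nonzero element of $(\ccH^\infty)^{\g n}$. Granting this, set $\mathcal M:=(\pi(G)\ccW^\pi)^\perp$; then $\mathcal M$ is closed and $G$-invariant, and inherits a smooth positive energy structure (smoothness because the orthogonal projection onto $\mathcal M$ commutes with $\pi(G)$ and hence preserves $\ccH^\infty$; positive energy because the restricted energy operator has spectrum inside that of $-\dd\pi(\bd_0)$). If $\mathcal M$ were nonzero, the key lemma applied to it would produce a nonzero vector in $(\mathcal M^\infty)^{\g n}\sseq\ccW^\pi\cap(\ccW^\pi)^\perp=\{0\}$, a contradiction.

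To prove the key lemma I will apply Proposition~\ref{p(i)-(iv)}. Set $T:=-\dd\pi(\bd_0)\geq 0$, $\mu_0:=\inf\Spec(T)\in[0,\infty)$, $y:=i\bd_0\in\g h$, and $\alpha(t):=\Ad(e^{ty})$. Since $\ad(y)\bd_n=in\bd_n$, each $\bd_n$ has Arveson spectrum $\{n\}$ under $\alpha$; and since $\pi_y(t)=e^{-itT}$, the Arveson spectral subspace of $\pi_y$ on a set $B\sseq\R$ coincides with the $T$-spectral subspace on $-B$. Fix $\delta\in(0,1)$. By the definition of $\mu_0$, the subspace $E_T([\mu_0,\mu_0+\delta))\ccH$ is nonzero, and Proposition~\ref{p(i)-(iv)}(iii) applied to the open interval $(-\mu_0-\delta,-\mu_0+1)$ for the generator $-T$ will produce a nonzero smooth vector $v$ in it, so that $v\in\ccH^\infty([-\mu_0-\delta,-\mu_0])$ in Arveson notation. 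Proposition~\ref{p(i)-(iv)}(ii) then places $\dd\pi(\bd_n)v$ in $\ccH^\infty([n-\mu_0-\delta,n-\mu_0])$, whose corresponding $T$-spectral window $[\mu_0-n,\mu_0+\delta-n]$ is disjoint from $\Spec(T)\sseq[\mu_0,\infty)$ for every $n\geq 1$, thanks to $\delta<1$. Hence $\dd\pi(\bd_n)v=0$ for all $n\geq 1$, so $v$ is a nonzero element of $(\ccH^\infty)^{\g n}$. The main subtlety I anticipate is consistently tracking the sign flip between the Arveson spectrum of $\pi_y$ and the ordinary spectrum of $T$ (via $-i\dd\pi(y)=-T$), together with the closed-versus-open interval bookkeeping when transferring between parts~(ii) and~(iii) of Proposition~\ref{p(i)-(iv)}.
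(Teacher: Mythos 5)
Your proposal is correct and follows essentially the same route as the paper: the paper likewise passes to $\ccH':=(\pi(G)\ccW^\pi)^\perp$, uses Proposition \ref{p(i)-(iv)}(iii) to produce a nonzero smooth vector whose energy spectrum sits in a window of width less than $1$ at the bottom of $\Spec(-\dd\pi(\bd_0)\big|_{\ccH'})$, and then applies Proposition \ref{p(i)-(iv)}(ii) with $\Spec_\alpha(\bd_n)$ to get $\dd\pi(\bd_n)v=0$, contradicting $v\perp\ccW^\pi$; the $H$-invariance argument is also identical. The differences are cosmetic only: you package the spectral argument as a separate key lemma, work with $y=i\bd_0$ (generator $-T$) rather than the paper's $x=-i\bd_0$, and use the one-sided window $[\mu_0,\mu_0+\delta)$ instead of the symmetric interval $(a-\tfrac12,a+\tfrac12)$.
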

\begin{proof}
The $H$-invariance of $\ccW^\pi$ follows from
the relation $\dd\pi(x)\pi(h)v=\pi(h)\dd\pi(\Ad(h^{-1})x)v$ and $\Ad(H)$-invariance of $\g n$. Next we prove
that $\pi(G)\ccW^\pi$ is dense.
Set $\ccH':=(\pi(G)\ccW^\pi)^\perp$. Then $\ccH'$ is a
closed and $G$-invariant subspace of $\ccH$, and the representation of $G$ on $\ccH'$ is of positive energy. Now set
\[
a:=\inf\mathrm{Spec}\left(-\dd\pi(\bd_0)\big|_{\ccH'}^{}\right)\text{ and }\textstyle I:=(a-\frac12,a+\frac12)\sseq \R.
\] 
Let $\mathrm E:\mathfrak B(\R)\to\mathrm B(\ccH')$ be the spectral measure associated to the self-adjoint 
operator $-\dd\pi(\bd_0)\big|_{\ccH'}^{}$. 
By Proposition \ref{p(i)-(iv)}(iii), there exists a nonzero  
$v\in\ccH^\infty\cap \mathrm E_I\ccH'$.  
Now set 
$x:=-i\bd_0$, and define
$\alpha(t):=\Ad(e^{tx})\in\GL(\gC)$ and 
$\pi_x(t):=\pi(e^{tx})$ 
for every $t\in\R$. 
It is straightforward to check that $v\in\ccH^\infty(I)$ and that 
$\alpha$ is equicontinuous. In view of 
\[  \alpha(t)\bd_n = e^{t \ad(-i \bd_0)} \bd_n = e^{-itn}\bd_n \quad \mbox{ and }\quad \alpha(f)\bd_n = \hat f(-n)\bd_n\] 
we obtain with Proposition~\ref{p(i)-(iv)}(ii) that 
\[
\textstyle\dd\pi(\bd_n)v\in
\ccH(\pi_x,[a-n-\frac12,a-n+\frac12])=
\mathrm E_{[a-n-\frac12,a-n+\frac12]}\ccH=\{0\}
\text{\,\ for every $n>0$.}
\] 
Consequently, $v\in\ccW^\pi$, which is a contradiction.
\end{proof}
Let $(\pi,\ccH)$ be a positive energy representation
of $G$.
We fix some notation which will appear in the rest of this article. Set $\ccW^\pi:=\oline{(\ccH^\infty)^{\g n}}\sseq \ccH$. Note that $\ccW^\pi$ is $H$-invariant. Now let $(\rho^\pi,\ccW^\pi)$ denote the unitary representation of $H$ that is obtained from the restriction of $\pi$. That is, $\rho^\pi(h):=\pi(h)\big|_{\ccW^\pi_{}}^{}$ for every $h\in H$.
Let 
\begin{equation}
\label{dfn-Epi}
\mathrm E^\pi_{}:\mathfrak B(\widehat{H})
\to
\rmB(\ccW^\pi)
\end{equation}
denote the spectral measure corresponding to 
$(\rho^\pi,\ccW^\pi)$, in the sense of Stone's Theorem \cite[Thm X.2.1]{FeDo88}. 
Here $\widehat{H}$ denotes the dual of $H$, and
$\mathfrak B(\widehat{H})$ denotes the $\sigma$-algebra of Borel 
subsets of $\widehat{H}$.


\begin{dfn}
A positive energy representation $(\pi,\ccH)$ of $G$ 
is said to be \emph{of bounded type} if 
$\rho^\pi:H\to\Uu(\ccW^\pi)$ is a  
Lie group homomorphism. 
\end{dfn}
\begin{rmk}
\label{rmk:i-iv}
For a unitary representation $(\rho,\ccW)$ of $H$, 
the following statements are equivalent.
\begin{itemize}
\item[(i)] $\rho:H\to \Uu(\ccW)$ is continuous with respect to the operator norm.
\item[(ii)] $\rho:H\to \Uu(\ccW)$ is a  Lie group homomorphism.
\item[(iii)] The spectral measure corresponding to $\rho$ has compact support.
\item[(iv)] $\rho$ has a unique extension to a homomorphism of complex Lie groups $\rho:H_\C\to\GL(\ccW)$.
\end{itemize}
\end{rmk}

The next proposition is a variation of \cite[Thm 2.17]{Ne-real}.
\begin{prp}
\label{rglVgqendv}
Let $(\pi,\ccH)$ be a positive energy representation of $G$, 
 $\ccW\sseq \ccH$ be a closed $H$-invariant subspace, and $P_\ccW:\ccH\to\ccW$ be the orthogonal projection on $\ccW$. 
Set  $\rho(h):=\pi(h)\big|_{\ccW}$ for every $h\in H$, so that $(\rho,\ccW)$ is a unitary representation of $H$. 
 Assume the following statements.
\begin{itemize}
\item[{\upshape (i)}]  $\rho:H\to\Uu(\ccW)$ is a Lie group homomorphism.
\item[{\upshape(ii)}] $\pi(G)\ccW:=\spn_\C\{\pi(g)w\,:\,g\in G,\, w\in\ccW\}$
is dense in 
$\ccH$, and $(\ccH^\infty)^{\g n}\cap \ccW$ is dense in $\ccW$.
\end{itemize}
For the unique holomorphic extension to a homomorphism $\rho:H_\C\to\GL(\ccW)$ 
of complex Lie groups, the map
\[
\ccH\to \cC(G,\ccW)\ ,\ v\mapsto f_v\text{\,\ where }f_v(g):=P_\ccW\pi(g)^{-1}v
\]
is an embedding of $(\pi,\ccH)$ in $\cC^\infty(G,\ccW)_{\rho,\beta_u}$
as a $G$-invariant Hilbert space and $\ccW= (\ccH^\infty)^{\g n}$.
\end{prp}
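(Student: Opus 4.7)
The plan is to build the embedding $v\mapsto f_v$ in three stages: basic properties, the L-condition for smooth vectors, and then an extension to all of $\ccH$ via the reproducing kernel machinery developed in Section~\ref{sec-cplxstr}.

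First, I would use hypothesis (i) together with Remark~\ref{rmk:i-iv} to extend $\rho$ uniquely to a holomorphic homomorphism $\rho:H_\C\to\GL(\ccW)$, so that $\cC^\infty(G,\ccW)_{\rho,\beta_u}$ is defined. The $H$-equivariance $f_v(gh)=\rho(\gamma_u(h^{-1}))f_v(g)$ then follows from the identity $P_\ccW\pi(h)=\rho(h)P_\ccW$, which is valid because $\ccW$ is $H$-invariant. Injectivity of $v\mapsto f_v$ is immediate from (ii): if $f_v\equiv 0$, then $v\perp\pi(g)\ccW$ for every $g\in G$, so $v\perp\overline{\pi(G)\ccW}=\ccH$.

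Next I would verify the L-condition $L_x f_v(g)=-\dd\rho(\beta_u(x))f_v(g)$ for $v\in\ccH^\infty$ and $x\in\oline{\g q}=\hC\oplus\oline{\g n}$. A direct computation yields $L_x f_v(g)=-P_\ccW\dd\pi(x)\pi(g^{-1})v$ for $x\in\gC$, using the $\C$-linearity of $L_x$ (legitimate since the complex structure on $\ccW$ is multiplication by $i$). For $x\in\hC$, $P_\ccW$ commutes with $\dd\pi(x)$ by $H$-invariance of $\ccW$, and $\dd\pi(x)|_\ccW=\dd\rho(x)$ is bounded, giving the desired identity. The crucial calculation is for $x\in\oline{\g n}$, where $\beta_u(x)=0$; it suffices to prove $P_\ccW\dd\pi(x)u=0$ for $u\in\ccH^\infty$, which follows by pairing against $w\in(\ccH^\infty)^{\g n}\cap\ccW$:
\[
\langle P_\ccW\dd\pi(x)u,w\rangle=-\langle u,\dd\pi(\oline{x})w\rangle=0,
\]
using $\oline{x}\in\g n$ and the density of $(\ccH^\infty)^{\g n}\cap\ccW$ in $\ccW$ from (ii).

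Then I would pull back the inner product of $\ccH$ through $v\mapsto f_v$ to produce a Hilbert space $\ccH'\sseq\cC(G,\ccW)$ with continuous point evaluations ($\|\ev_g f_v\|\leq\|v\|$) on which the left translation action of $G$ is unitary and equivalent to $(\pi,\ccH)$. A short computation shows $\ev_\yek^*w=f_w$ for $w\in\ccW$, whence $\ev_\yek^*$ is isometric and the conditions of Definition~\ref{Ginvdfn} hold. The subtler remaining point is to confirm that every element of $\ccH'$ lies in $\cC^\infty(G,\ccW)_{\rho,\beta_u}$, not merely those coming from $\ccH^\infty$. For this I would analyze the reproducing kernel $\breve K(g,g')=P_\ccW\pi(g^{-1}g')|_\ccW$, which is uniformly bounded in operator norm, $(G,\rho)$-invariant, satisfies $\breve K(\yek,\yek)=\yek_\ccW$, and for which the previous step verifies the L-condition on the dense subspace $(\ccH^\infty)^{\g n}\cap\ccW$. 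Using (i) to control the kernel uniformly, one promotes this to the statement that the corresponding kernel on $\bS\times\oline{\bS}$ is holomorphic; Proposition~\ref{pEXGi} then produces a $G$-invariant Hilbert space inside $\cC^\infty(G,\ccW)_{\rho,\beta_u}$ with reproducing kernel $\breve K$, which must coincide with $\ccH'$ by uniqueness (Proposition~\ref{patmston}). Finally, Proposition~\ref{adprp-Gi-ii-iii}(iv) applied to $\ccH'$ yields $\ccW^\ev=(\ccH'^\infty)^{\g n}$, and transferring through the unitary equivalence $\ccH\cong\ccH'$ gives $\ccW=(\ccH^\infty)^{\g n}$ inside $\ccH$.

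The main obstacle is propagating smoothness and the L-condition from the dense subspace $\{f_v:v\in\ccH^\infty\}$ to the full Hilbert completion $\ccH'$. The bounded-type hypothesis (i) is indispensable here: it supplies the uniform operator-norm control on $\breve K$ needed to convert pointwise smoothness on a dense subspace of $\ccW$ into genuine smoothness of the reproducing kernel, and thereby of every section in $\ccH'$.
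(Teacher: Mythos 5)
Your first two steps (the holomorphic extension of $\rho$ via Remark \ref{rmk:i-iv}, injectivity from (ii), and the computation $L_xf_v(g)=-\dd\rho(\beta_u(x))f_v(g)$ for $v\in\ccH^\infty$ by pairing against the dense subspace $(\ccH^\infty)^{\g n}\cap\ccW$) are exactly the paper's argument, and the verification that $\ev_\yek^*w=f_w$ is isometric and that the left-translation action is unitary is fine. The gap is at the step you yourself flag as the main obstacle: passing from the dense subspace $\{f_v\,:\,v\in\ccH^\infty\}\sseq\cC^\infty(G,\ccW)_{\rho,\beta_u}$ to all of $\ccH'$. Your proposed mechanism --- ``use (i) to control $\breve K$ uniformly and thereby promote the L-condition on a dense subspace to holomorphy of the kernel on $\bS\times\oline\bS$'' --- is an assertion, not an argument. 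The uniform bound $\|\breve K(g,g')\|\le 1$ holds for trivial reasons (unitarity of $\pi$ and $\|P_\ccW\|\le 1$, independently of (i)), and all it gives you is that $f_{v_n}\to f_v$ uniformly on $G$ when $v_n\to v$ in $\ccH$. Uniform convergence of $\ccW$-valued functions does not by itself make the limit smooth, let alone a solution of \eqref{eq:lx1}; what is needed is precisely the nontrivial fact that, in this locally convex setting, $\Gamma_{\rm hol}(\W_\rho)$ is closed in $\Gamma_{\rm ct}(\W_\rho)$ for the compact-open topology, which is how the paper closes the argument (continuity of $v\mapsto\widetilde f_v$ into $\Gamma_{\rm ct}(\W_\rho)$ plus \cite[Cor. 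III.12]{neeb-01}, then \eqref{GaMinfC-H}). Your alternative route through holomorphy of the kernel is moreover circular as stated: the available tools for proving a kernel holomorphic (Proposition \ref{prpEVinBe}(iv), resp.\ \cite[Lem. A.III.9(iii)]{NeHo}) presuppose that the Hilbert space already sits inside $\cO(\bS,\ccW)_\rho$, which is exactly what you are trying to establish; knowing holomorphy of $p\mapsto K(p,p')w$ only for $w$ in a dense subspace and then taking uniform limits runs into the same closedness issue.

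A secondary, fixable misstep: you invoke Proposition \ref{patmston} to identify $\ccH'$ with the space produced by Proposition \ref{pEXGi}, but that uniqueness statement applies to $G$-invariant Hilbert spaces already contained in $\cC^\infty(G,\ccW)_{\rho,\beta_u}$, which $\ccH'$ is not yet known to be. The correct tool is the elementary uniqueness of a reproducing kernel Hilbert space of $\ccW$-valued functions with a given kernel \cite[Lem. I.1.5]{NeHo}, applied to $\breve K$ on $G\times G$. With that substitution the identification step is sound, but the proof still stands or falls with the missing closedness/holomorphy input above; supplying it (as the paper does) is what actually finishes the argument, after which the final claim $\ccW=(\ccH^\infty)^{\g n}$ via Proposition \ref{adprp-Gi-ii-iii}(iv) goes through as you say.
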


\begin{proof}
If $v\in\ccH^\infty$, then 
for every $x\in\oline{\g q}$ and every $w\in (\ccH^\infty)^{\g n}\cap\ccW$ we have
\begin{align*}
\lag P_\ccW\dd\pi(x)\pi(g)^{-1}v,w\rag 
&=\lag \dd\pi(x)\pi(g)^{-1}v,w\rag
=-\lag \pi(g)^{-1}v,\dd\pi(\oline x)w\rag\\
&=\lag P_\ccW\pi(g)^{-1}v,\dd\rho(\beta_u(x))^*w\rag 
=\lag \dd\rho(\beta_u(x))P_\ccW\pi(g)^{-1}v,w\rag. 
\end{align*}
If follows that 
$
L_xf_v(g)
=
-P_\ccW\dd\pi(x)\pi(g)^{-1}v
=
-\dd\rho(\beta_u(x))f_v(g)
$
for every 
$x\in\oline{\g q}$. Consequently, 
$f_v\in \cC^\infty(G,\ccW)_{\rho,\beta_u}$ for every $v\in\ccH^\infty$.

Let $\Gamma_\text{ct}(\W_\rho)$ denote the space of continuous sections of $\W_\rho$, equipped with the compact-open topology. By \cite[Cor. III.12]{neeb-01}, the space $\Gamma_\textrm{hol}(\W_\rho)$ of holomorphic sections of $\W_\rho$ is a closed subspace of $\Gamma_\text{ct}(\W_\rho)$.
For every $v\in \ccH$, we set 
\[
\widetilde f_v:M\to \W_\rho\ ,\ 
\widetilde f_v(gH):=[g,f_v(g)]\text{ for every }g\in G.
\]
Then $\widetilde f_v\in\Gamma_\textrm{ct}(\W_\rho)$, and the map
$\bfJ:\ccH\to\Gamma_\textrm{ct}(\W_\rho)$ defined by $\bfJ(v):=\widetilde f_v$
is continuous. We have shown above that 
$f_v\in \cC^\infty(G,\ccW)_{\rho,\beta_u}$ for every $v\in\ccH^\infty$. Therefore from the isomorphism \eqref{GaMinfC-H} it follows that $\bfJ(\ccH^\infty)\sseq\Gamma_\text{hol}(\W_\rho)$. 
Since $\ccH^\infty$ is a dense subspace of $\ccH$, we obtain that $\bfJ(\ccH)\sseq \Gamma_\text{hol}(\W_\rho)$, that is, 
$f_v\in \cC^\infty(G,\ccW)_{\rho,\beta_u}$ for every $v\in \ccH$.

If $f_v=0$ for some $v\in\ccH$, then  $v\in (\pi(G)\ccW)^\perp$, that is, $v=0$. Therefore 
the map $v\mapsto f_v$ is an injection.
Checking the $G$-equivariance relation $f_{\pi(g_\circ)v}(g)=f_v(g_\circ^{-1}g)$, and continuity of point evaluations $\ev_g^*v:=f_v(g)$ for $g\in G$, are straightforward. 
The reproducing kernel of $\ccH$, as defined in 
\eqref{RKGtG}, is given by 
$\breve K(g,g')=P_\ccW\pi(g)^{-1}\pi(g')P_\ccW^*$, and therefore it satisfies \eqref{bK11=1}. 
Remark \ref{GINVR} now implies that 
$\ccH$ is $G$-invariant. 
The equality $\ccW=(\ccH^\infty)^{\g n}$ follows from Proposition
\ref{adprp-Gi-ii-iii}(iv).
\end{proof}

\begin{prp}
\label{pipij}
Let $(\pi,\ccH)$ be a positive energy representation of $G$. Then there exists
a sequence $(\pi_j)_{j =1}^n$, 
where $n\in \N\cup\{\infty\}$, of positive energy representations of bounded type such that 
$\pi=\oplus_{j=1}^n\pi_j$,
and 
the spectral measures $\rmE^{\pi_j}_{}$, as defined in
\eqref{dfn-Epi}, are mutually singular.
\end{prp}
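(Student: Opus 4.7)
The plan is to use the spectral measure $\rmE^\pi$ of $(\rho^\pi,\ccW^\pi)$ to partition $\widehat H \cong \R^2$ into countably many bounded Borel sets $\{B_j\}_{j\geq 1}$, obtaining a decomposition $\ccW^\pi = \bigoplus_j \ccW_j$ with $\ccW_j := \rmE^\pi(B_j)\ccW^\pi$ carrying a compactly supported spectral measure, and then to lift this to a $G$-invariant orthogonal decomposition $\ccH = \bigoplus_j \ccH_j$ with $\ccH_j := \overline{\pi(G)\ccW_j}$. One can arrange the $B_j$ (e.g.\ as annuli $\{r_{j-1}\leq |\chi|< r_j\}$ with a generic choice of radii, or as axis-aligned lattice cells after a generic translation) so that $\rmE^\pi(B_j) = \rmE^\pi(B_j^\circ)$ where $B_j^\circ$ denotes the open interior; this will be needed below. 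By Remark~\ref{rmk:i-iv}, each restricted representation $\rho^\pi|_{\ccW_j}$ is a Lie group homomorphism $H\to \Uu(\ccW_j)$, hence of bounded type.

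The crucial ingredient is to prove that $(\ccH^\infty)^{\g n} \cap \ccW_j$ is dense in $\ccW_j$. The diagonal $\Ad(H)$-action $\Ad(\exp(t\, i\bd_0))\bd_n = e^{itn}\bd_n$, combined with the commutation formula in Proposition~\ref{p(i)-(iv)}(i) applied to each generator of $\g h$, yields $\pi(f)(\ccH^\infty)^{\g n} \subseteq (\ccH^\infty)^{\g n}$ for every $f\in L^1(H)$. Choosing Schwartz functions $f_k\in L^1(H)$ with $\widehat{f_k}$ compactly supported in $B_j^\circ$ and converging boundedly to $\chi_{B_j^\circ}$, the vectors
\[
\pi(f_k)w = \rho^\pi(f_k)w = \int \widehat{f_k}\,d\rmE^\pi\,w
\]
lie in $(\ccH^\infty)^{\g n} \cap \ccW_j$ for every $w\in (\ccH^\infty)^{\g n}$, and converge to $P_j w$ in $\ccH$, where $P_j := \rmE^\pi(B_j)$. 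Since $(\ccH^\infty)^{\g n}$ is dense in $\ccW^\pi$ by definition, this proves the density. Proposition~\ref{rglVgqendv} applied to $(\ccH_j, \ccW_j)$ with $\pi$ replaced by $\pi|_{\ccH_j}$ then identifies $\ccW_j = (\ccH_j^\infty)^{\g n}$ and embeds $\pi_j := \pi|_{\ccH_j}$ as a $G$-invariant Hilbert space in $\cC^\infty(G,\ccW_j)_{\rho^\pi|_{\ccW_j},\,\beta_u}$, so each $\pi_j$ is of positive energy and bounded type, with $\rmE^{\pi_j}$ supported in $\overline{B_j}$.

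For the orthogonality $\ccH_j \perp \ccH_k$ with $j\neq k$, I would apply the same construction to $\ccW_j\oplus\ccW_k$ inside $\ccH_{jk} := \overline{\pi(G)(\ccW_j\oplus\ccW_k)}$, producing a $G$-invariant Hilbert space of bounded type to which Proposition~\ref{adprp-Gi-ii-iii}(v) applies; its hypothesis of an $\Ad(H)$-invariant splitting $\gC=\oline{\g n}\oplus\hC\oplus\g n$ with $\oline{\g n}\subseteq \ker(\beta_u)$ is met by the standard Virasoro triangular decomposition, and the hypothesis that closed $\rho^\pi(H)$-invariant subspaces of $\ccW_j\oplus\ccW_k$ are $\rho^\pi(H_\C)$-invariant holds because bounded spectral type gives $\rho^\pi(H_\C)\subseteq \rho^\pi(H)''$. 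The resulting von Neumann algebra isomorphism $\pi(G)'|_{\ccH_{jk}}\cong \rho^\pi(H)'|_{\ccW_j\oplus\ccW_k}$ lifts the orthogonal projection of $\ccW_j\oplus\ccW_k$ onto $\ccW_j$ to a $G$-invariant projection $\widetilde Q$ on $\ccH_{jk}$, whence $\ccH_j \subseteq \widetilde Q\ccH_{jk}$ and $\ccH_k\subseteq(I-\widetilde Q)\ccH_{jk}$. Combined with the density of $\sum_j \ccH_j$ in $\ccH$ from Proposition~\ref{prpPoScyq}, this gives the orthogonal sum $\ccH=\bigoplus_j \ccH_j$, and the mutual singularity of the $\rmE^{\pi_j}$ follows from the disjointness of the $B_j$. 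The principal obstacle is the density step, which requires simultaneously arranging smoothness, annihilation by $\g n$, and sharp spectral localization inside $B_j^\circ$; this is resolved by the happy compatibility of the $L^1(H)$-action with $(\ccH^\infty)^{\g n}$ (from the diagonal $\Ad(H)$-action on $\g n$) and the 2-dimensional Fourier calculus on the abelian group $H$.
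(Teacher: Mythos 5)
Your proposal is correct in substance but follows a genuinely different route from the paper's. The paper first splits $\ccH$ along the spectrum of the central element $\vecc$ into the $G$-invariant blocks $\rmE^\vecc_{[n,n+1)}\ccH$, and then, inside each block, iteratively peels off the spectral window $\ccW_1=\rmE^{\bd_0}_{I_1}\ccH$, $I_1=(a_1-\frac12,a_1+\frac12)$, sitting at the \emph{bottom} of the energy spectrum: the Arveson spectral subspace results (Proposition \ref{p(i)-(iv)}(ii),(iii)) together with positivity of the energy show that $\ccH^\infty\cap\ccW_1$ is dense in $\ccW_1$ and contained in $(\ccH^\infty)^{\g n}$, the iteration is then run on $\ccH_1^\perp$, so orthogonality of the pieces is automatic and mutual singularity comes from $\ccW_2=\rmE^{\bd_0}_{I_2\setminus I_1}\ccH$. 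You instead cut the joint $H$-spectrum of $\ccW^\pi$ into arbitrary bounded cells $B_j$ and prove density of $(\ccH^\infty)^{\g n}\cap\ccW_j$ in $\ccW_j$ directly, by smoothing vectors of $(\ccH^\infty)^{\g n}$ with $L^1(H)$-functions (products of one-variable Schwartz functions suffice for lattice cells) whose Fourier transforms localize in $B_j^\circ$; your key observation that such smoothing preserves $(\ccH^\infty)^{\g n}$, because $\Ad(H)$ acts diagonally on $\g n$ and the commutation formula of Proposition \ref{p(i)-(iv)}(i) applies, is a genuine alternative to the paper's ``bottom of the spectrum'' argument and works at any spectral location. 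The price is twofold: (a) since your pieces are not built inside successive orthocomplements, orthogonality of the $\ccH_j$ is not automatic, and you must invoke the commutant isomorphism of Proposition \ref{adprp-Gi-ii-iii}(v) (the tool the paper only needs later, in Step 2 of the proof of Theorem \ref{thm-MAIN}); your verification of its hypotheses, in particular $\rho(H_\C)\sseq\rho(H)''$ for bounded spectral type, is correct; and (b) your density step requires $\rmE^\pi(\partial B_j)=0$, and the ``generic'' choice of cell boundaries is only guaranteed when at most countably many boundary lines or circles are charged by $\rmE^\pi$, e.g.\ when $\ccW^\pi$ is separable, whereas the paper's half-open windows need no genericity at all. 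Granting separability (implicit in the paper's direct integral framework), your argument is complete, and in return it delivers the mutual singularity of the $\rmE^{\pi_j}$ immediately from the disjointness of the cells rather than from the exhaustion computation.
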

\begin{proof}
Let $C_\circ\cong\R$ denote the one-parameter subgroup $e^{\R\vecc}\sseq G$,
and let $\rmE^\vecc:\mathfrak B(\R)\to \rmB(\ccH)$ denote the spectral measure
of the self-adjoint operator $\dd\pi(i\vecc)$. 
Then 
$\ccH=\oplus_{n\in\Z}\ccH^n$, where $\ccH^n:=\rmE^\vecc_{[n,n+1)}\ccH$. Since
$e^{\R\vecc}\sseq Z(G)$, every $\ccH^n$ is $G$-invariant. 
Therefore, from now on, we can assume that $\ccH=\ccH^n$ for some $n\in\Z$.

Now let $D_\circ\cong \R$ be the one-parameter subgroup $e^{\R i\bd_0}\sseq G$ 
and let $\rmE^{\bd_0}:\mathfrak B(\R)\to\rmB(\ccH)$
denote  the spectral measure of
the self-adjoint operator $-\dd\pi(\bd_0)$.
%
%
Set $a_1=\inf(\supp(\rmE^{\bd_0}))$.
Note that $a_1\geq 0$, because  $(\pi,\ccH)$ is a positive energy representation.
Set $I_1:=(a_1-\frac12,a_1+\frac12)\sseq\R$ and
$\ccW_1:=\mathrm E_{I_1}^{\bd_0}\ccH$. Then $\ccW_1$ is $H$-invariant, and
from Proposition \ref{p(i)-(iv)}(iii) it follows that
$
\ccH^\infty\cap\ccW_1$ is a dense subspace of $\ccW_1$. On the other hand, setting $x:=-i\bd_0$ in Proposition \ref{p(i)-(iv)}(ii) implies that
$\ccH^\infty\cap \ccW_1\sseq(\ccH^\infty)^\g n$ 
(cf.\ the proof of Proposition~\ref{prpPoScyq}). 
Consequently, $(\ccH^\infty)^\g n\cap \ccW_1$ is 
a dense subspace of $\ccW_1$. 

Now let $(\rho_1^{},\ccW_1)$ denote the representation of $H$ defined by
$\rho_1^{}(h):=\pi(h)\big|_{\ccW_1}$ for $h\in H$. 
The above arguments imply that the spectral measure of $\rho_1^{}$ is compactly supported, and therefore $\rho_1^{}:H\to\Uu(\ccW_1)$ is a smooth Lie group homomorphism.
From 
\cite[Thm III.1.5]{Ne06} it follows that 
there exists a unique extension of $\rho_1^{}$ 
 to a homomorphism of complex Lie groups $\rho_1^{}:H_\C\to\GL(\ccW_1)$. 
Set $\ccH_1:=\oline{\pi(G)\ccW_1}$
where
$\pi(G)\ccW_1:=\spn_\C\{\pi(g)w\,:\,g\in G,\,w\in\ccW_1\}$,
 and let $(\pi_1,\ccH_1)$ be the representation of $G$  defined by $\pi_1(g):=\pi(g)\big|_{\ccH_1^{}}$ for $g\in G$. 
By Proposition \ref{rglVgqendv}, the representation $(\pi_1,\ccH_1)$ can be realized as a $G$-invariant Hilbert space in 
$ \cC^\infty(G,\ccW_1)_{\rho_1^{},\beta_u}$, and
$\ccW_1=(\ccH^\infty_1)^{\g n}$.
Consequently, $(\pi_1,\ccH_1)$ is of bounded type.

We now repeat the above process with the representation of $G$ on $\ccH_1^\perp$. That is, we set 
$a_2=\inf(\supp(\rmE^{\bd_0}\big|_{\ccH_1^\perp}))$, $I_2=(a_2-\frac12,a_2+\frac12)$, $\ccW_2=\mathrm E^{\bd_0}_{I_2}\ccH_1^\perp$, $\ccH_2=\oline{\pi(G)\ccW_2}$, and so on. Note that it is possible that $I_1\cap I_2\neq\varnothing$, but the spectral measures on $\ccW_1$ and $\ccW_2$ will be mutually singular, because $
\ccW_2=\mathrm E^{\bd_0}_{I_2^{}}\ccH_1^\perp=
\mathrm E_{I_2}^{\bd_0}\mathrm E_{\R\bls I_1}^{\bd_0}\ccH=
\mathrm E_{I_2\bls I_1}\ccH
$.
\end{proof}

\subsection{The $H$-spectrum of a positive energy representation}
\label{subsecTspec}

Let $(\pi_\circ,\ccH_\circ)$ be an
irreducible positive energy representation of $G$. Then 
from \cite[Prop. 4.11]{NeConfl} it follows that
$(\pi_\circ,\ccH_\circ)$ is a \emph{highest weight} representation, that is, $\mathrm{Spec}(-\dd\pi_\circ(\bd_0))$ is  
discrete  and there is a unique eigenvector of $-\dd\pi_\circ(\bd_0)$ in $\ccH_\circ$ 
corresponding to its smallest 
eigenvalue.
Note that Proposition \ref{rglVgqendv} and 
Proposition \ref{patmston} imply that 
$(\pi_\circ,\ccH_\circ)$ is uniquely determined by its highest weight.

Let $\widehat H_+\sseq \widehat H$ be the set of unitary characters of $H$ which appear as highest weights of irreducible positive energy  representations  
  of $G$. By the results of 
\cite{GoWa} and \cite{Toledano}, $\widehat H_+$ can be identified with the set of highest weights of unitarizable modules of the Virasoro algebra, whose description is by now well known 
\cite{GoKeO},   
  \cite[Sec. 3.3]{KaRa}, \cite{Langlands}. 
(Smoothness of the irreducible  positive energy representations is addressed in
\cite[p.499]{Toledano}.)   
    It follows that $\widehat{H}_+$ is a closed subset of $\widehat{H}$. In this paper we do not need the explicit description of $\widehat{H}_+$.

\begin{prp}
\label{P-sseqH+}
Let $(\pi,\ccH)$ be a positive energy representation of $G$, and let $\mathrm E^\pi$ be defined as in 
\eqref{dfn-Epi}. Then $\supp(\rmE^\pi)\sseq \widehat{H}_+$.
\end{prp}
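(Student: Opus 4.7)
The plan is to first reduce to the bounded type case using Proposition~\ref{pipij}, then to invoke the spectral theorem for the abelian group $H$ on $\ccW^\pi$ to split $\pi$ along the dual $\widehat H$, thereby reducing matters to the case where $H$ acts by a single character, in which situation a highest weight vector is explicit. Concretely, Proposition~\ref{pipij} gives $\pi=\bigoplus_j\pi_j$ with each $\pi_j$ of bounded type and the spectral measures $\rmE^{\pi_j}$ mutually singular, so $\supp(\rmE^\pi)=\oline{\bigcup_j\supp(\rmE^{\pi_j})}$; since $\widehat H_+$ is closed, it suffices to treat each $\pi_j$, and I may assume $(\pi,\ccH)$ is of bounded type.

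In that case $\rho^\pi:H\to\Uu(\ccW^\pi)$ is a Lie group homomorphism (Remark~\ref{rmk:i-iv}), extending uniquely to a homomorphism of complex Lie groups $\rho^\pi:H_\C\to\GL(\ccW^\pi)$. Proposition~\ref{rglVgqendv} realizes $\ccH$ as a $G$-invariant Hilbert space inside $\cC^\infty(G,\ccW^\pi)_{\rho^\pi,\beta_u}$, and Proposition~\ref{adprp-Gi-ii-iii}(iv) then yields $\ccW^\pi=(\ccH^\infty)^{\g n}$. Since $H$ is abelian and $\rho^\pi$ is norm continuous, the joint spectral theorem provides a direct integral decomposition
\[
\ccW^\pi\cong\int^\oplus_{\widehat H}\ccW^\pi_\chi\,d\mu(\chi),
\]
where $\mu$ is a finite Borel measure on $\widehat H$ with $\supp(\mu)=\supp(\rmE^\pi)$, and $H$ acts on the fiber $\ccW^\pi_\chi$ by $\chi$ for $\mu$-a.e.\ $\chi$.

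The next step is to transport this decomposition to $\ccH$ via the reproducing kernel. By Proposition~\ref{prpEVinBe}(iv) the kernel $K:\bS_u\times\oline\bS_u\to\rmB(\ccW^\pi)$ is holomorphic, $(G,\rho^\pi)$-invariant, positive definite, and satisfies $K(p_\circ,p_\circ)=\yek_{\ccW^\pi}$. The $(G,\rho^\pi)$-invariance, combined with the formula $K([g,s],[g,s])=\rho^\pi(s^{-1})\rho^\pi(s^{-1})^*$ at diagonal points, forces each $K(p,p')$ to commute with $\rho^\pi(H)''$, so $K$ disintegrates as $K=\int^\oplus K_\chi\,d\mu(\chi)$ with each fiber $K_\chi:\bS_u\times\oline\bS_u\to\rmB(\ccW^\pi_\chi)$ holomorphic, $(G,\chi)$-invariant, positive definite, and normalized at $p_\circ$. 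Proposition~\ref{pEXGi} then produces, for $\mu$-a.e.\ $\chi$, a $G$-invariant Hilbert space $\ccH_\chi$ with a positive energy representation $(\pi_\chi,\ccH_\chi)$ satisfying $\ccW^{\pi_\chi}=\ccW^\pi_\chi$.

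Finally, for $\mu$-a.e.\ $\chi\in\supp(\rmE^\pi)$ the fiber $\ccW^{\pi_\chi}$ is nonzero and $H$ acts on it by the single character $\chi$, so any unit vector $v\in\ccW^{\pi_\chi}=(\ccH_\chi^\infty)^{\g n}$ satisfies $\dd\pi_\chi(\g n)v=0$ and $\rho^{\pi_\chi}(h)v=\chi(h)v$; thus $v$ is a highest weight vector of weight $\chi$. The cyclic $\gC$-submodule $\Uu(\gC)v\sseq\ccH_\chi$ carries the positive definite invariant form inherited from the Hilbert inner product, so the radical of that form is the maximal proper submodule, and the corresponding quotient is the irreducible unitarizable highest weight module of weight $\chi$; its Hilbert completion is a unitary irreducible positive energy $G$-representation with highest weight $\chi$, so $\chi\in\widehat H_+$. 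Closedness of $\widehat H_+$ then gives $\supp(\rmE^\pi)=\supp(\mu)\sseq\widehat H_+$. The main obstacle is the disintegration of the operator-valued holomorphic reproducing kernel $K$ along $\mu$: verifying measurability, holomorphy, positivity and normalization of the fibers $K_\chi$ simultaneously is exactly where the ``holomorphic reproducing kernels with values in a $C^*$-algebra'' highlighted in the introduction have to be brought in.
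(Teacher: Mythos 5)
Your overall architecture mirrors the paper's (reduce to bounded type via Proposition \ref{pipij}, realize $(\pi,\ccH)$ inside $\cC^\infty(G,\ccW^\pi)_{\rho^\pi,\beta_u}$, extract from the reproducing kernel a scalar positive definite $(G,\chi)$-invariant kernel for each $\chi$ in the support, and finish using closedness of $\whH_+$), but the central step is asserted rather than proved, and it is exactly where the paper's new idea enters. You claim that $(G,\rho^\pi)$-invariance together with the diagonal formula $K([g,s],[g,s])=\rho^\pi(s^{-1})\rho^\pi(s^{-1})^*$ ``forces each $K(p,p')$ to commute with $\rho^\pi(H)''$.'' This does not follow by any algebraic manipulation: invariance only controls how $K$ transforms under the group actions, and the diagonal formula only constrains diagonal values. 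The paper's Step 1 proves the analogous (stronger) statement that $K(p,p')$ lies in the commutative $C^*$-algebra $\cA$ generated by $\rho^\pi(H)$, and the proof is genuinely analytic: compose $K$ with the quotient map $\rmB(\ccW^\pi)\to\rmB(\ccW^\pi)/\cA$, note that the composition is holomorphic (Proposition \ref{prpEVinBe}(iv)) and vanishes on the diagonal, and invoke the fact that a holomorphic map on $\bS\times\oline\bS$ is determined by its diagonal values. Without this argument your disintegration has no starting point, and you yourself flag it as ``the main obstacle'' rather than supplying it.

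Even granting the commutation, your direct-integral route $K=\int^\oplus K_\chi\,d\mu(\chi)$ carries further unaddressed burdens that the paper deliberately avoids: (a) you must produce a single $\mu$-null set off which the fibers $K_\chi$ are simultaneously positive definite, holomorphic, $(G,\chi)$-invariant and normalized --- an uncountable family of a.e.\ conditions requiring countable-density and local-boundedness arguments (and separability hypotheses the paper never makes); (b) your conclusion then holds only for $\mu$-a.e.\ $\chi$, so you need the (true but unstated) remark that such $\chi$ are dense in $\supp(\rmE^\pi)$ before closedness of $\whH_+$ applies. The paper sidesteps all of this by staying at the $C^*$-level: the isomorphism $\cA\cong\cC(\supp(\rmE^\pi))$ gives an evaluation character $\Lambda_\chi$ for \emph{every} $\chi\in\supp(\rmE^\pi)$, and $K_\chi:=\Lambda_\chi\circ K$ is automatically holomorphic, $(G,\chi)$-invariant and positive definite (characters are completely positive), after which Proposition \ref{pEXGi} and the highest-weight identification in Steps 3--4 give $\chi\in\whH_+$ pointwise with no measure theory at all. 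Finally, your last algebraic step is garbled: the contravariant form on $\Uu(\gC)v$ is positive definite, hence has trivial radical and there is no quotient to take; the correct statement is that positivity forces $\Uu(\gC)v$ to be the irreducible unitarizable highest weight module, or, as in the paper, one argues directly that $(\pi_\chi,\ccH_\chi)$ is irreducible (Kobayashi's criterion) and is a highest weight representation of highest weight $\chi$.
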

\begin{proof}
By Proposition \ref{pipij} we can assume that $(\pi,\ccH)$ is of bounded type. 
Let $\cA\sseq \rmB(\ccW^\pi)$ denote the $C^*$-algebra generated by 
$\rho^\pi(H)$. By Remark \ref{rmk:i-iv}, 
$\rho$ has a unique extension  to a homomorphism of complex Lie groups $\rho^\pi:H_\C\to\GL(\ccW^\pi)$. Note that $\rho^\pi(H_\C)\sseq\cA$.

\textbf{Step 1.} By Proposition \ref{rglVgqendv} we can assume that $(\pi,\ccH)$ is embedded as a $G$-invariant Hilbert space in $\cC^\infty(G,\ccW^\pi)_{\rho^\pi,\beta_u}$. Let $K:\bS\times \oline \bS\to \rmB(\ccW^\pi)$ be the corresponding reproducing kernel. Next we prove that
$K(p,p')\in\cA$ for every $(p,p')\in \bS\times\oline \bS$. 
If $p = [g,s]$, then  $K(p,p)=\rho^\pi(s^{-1})\rho^\pi(s^{-1})^*\in\cA$.
Now let  $\mathbf q_\mathcal A^{}:\mathrm B(\ccW)\to \mathrm B(\ccW)/\mathcal A$ denote the canonical projection. 
The quotient $\mathrm B(\ccW)/\mathcal A$ is a complex Banach space.  
The map $\mathbf q_\mathcal A^{}\circ K:\bS\times \oline \bS\to 
\mathrm B(\ccW)/\mathcal A$ 
vanishes on the diagonal, and Proposition 
\ref{prpEVinBe}(iv) implies that $\mathbf q_\mathcal A^{}\circ K$
is holomorphic. Therefore 
$\mathbf q_\mathcal A^{}\circ K(p,p')$ vanishes 
for every $(p,p')\in \bS\times\oline \bS$. 
Consequently, $K(p,p')\in\mathcal A$ for every $(p,p')\in \bS\times\oline \bS$.

\textbf{Step 2.} The $C^*$-algebra of $H$ is isomorphic to $\cC_\circ(\whH)$, the algebra of complex-valued continuous functions on $\widehat{H}$ which vanish at infinity. Let $I^\pi$ denote the kernel of the canonical map
\begin{equation}
\label{C*map}
\cC_\circ(\whH)\to\rmB(\ccW^\pi)\ ,\ 
\psi\mapsto 
\int_{\whH}\psi(\chi)d\rmE^\pi(\chi).
\end{equation}
Then 
$
I^\pi=\{\psi\in \cC_\circ(\whH)\,:\,
\psi|^{}_{\supp(\rmE^\pi)}\equiv0
\}$. Since $\supp(\mathrm E^\pi)$ is compact,  
it follows that 
the image of 
\eqref{C*map} is a dense subspace of $\cA$, and 
 $\cC_\circ(\whH)/I^\pi\cong \cC(\supp(\rmE^\pi))$, where
$\cC(\supp(\rmE^\pi))$ is the space of continuous complex-valued functions on $\supp(E^\pi)$. 
By elementary $C^*$-theory
\cite[Sec. 1.8]{Dix}
 the map \eqref{C*map} 
induces an isomorphism 
$\iota^\pi:\cC(\supp(\rmE^\pi))\to \cA$ of $C^*$-algebras.

\textbf{Step 3.} Fix $\chi\in\supp(\rmE^\pi)$ and let
$\Lambda_\chi:\cA\to\C$ denote the character of $\cA$ 
defined by the relation $\Lambda_\chi\circ\iota^\pi(\psi)=\psi(\chi)$ for every $\psi\in \cC(\supp(\rmE^\pi))$. Note that
$\Lambda_\chi(\rho^\pi(h))=\chi(h)$ for every $h\in H$, and since both sides are holomorphic functions of $h\in H_\C$, equality holds for every $h\in H_\C$ as well.
Now set $K_\chi:=\Lambda_\chi\circ K$. It is clear that $K_\chi:\bS\times 
\oline \bS\to\C$ is holomorphic and $(G,\chi\circ\rho)$-invariant. 
Moreover, $K_\chi$ is positive definite, because every character $\Lambda_\chi$  is completely positive 
in the sense of \cite[Def. IV.3.3]{Takesaki}.
Consequently, by Proposition \ref{pEXGi},  
there exists a $G$-invariant Hilbert space 
$\ccH_\chi\sseq \cC^\infty(G,\C)_{\chi,\beta_u}$
with reproducing kernel $K_\chi$. 

\textbf{Step 4.} 
Let  $(\pi_\chi,\ccH_\chi)$ denote the smooth unitary
representation of $G$ on $\ccH_\chi$ that is defined in
Proposition \ref{prpEVinBe}(i). By the isomorphism 
\eqref{GaMinfC-H} we can assume $\ccH_\chi\sseq \mathcal O(\bS,\C)_{\chi\circ \rho}$.
Set $K_{\chi,p_\circ}(p):=K_\chi(p,p_\circ)\in\C$, so that $K_{\chi,p_\circ}\in\ccH$.
Note that 
$\ccH_\chi\neq\{0\}$, because 
$K_{\chi,p_\circ}(p_\circ)=\Lambda_\chi(K(p_\circ,p_\circ))=~1$. 
Now S.~Kobayashi's criterion for irreducibility of holomorphic induction \cite[Thm IV.1.10]{NeHo}
implies that $(\pi_\chi,\ccH_\chi)$ is irreducible (see also Proposition~\ref{adprp-Gi-ii-iii}).

It is straightforward to check that 
$\pi_\chi(h)K_{\chi,m_\circ}=\chi(h)K_{\chi,m_\circ}$ for every $h\in H$.
Therefore
Proposition \ref{adprp-Gi-ii-iii}(iv) implies that $\mathrm{im}(\ev_\yek^*)=
\mathrm{im}(\ev_{m_\circ}^*)=
\C K_{\chi,m_\circ}=(\ccH_\chi^\infty)^{\g n}$, and 
Proposition \ref{p(i)-(iv)}(iv) implies that 
$(\pi_\chi,\ccH_\chi)$ is a highest weight representation with highest weight $\chi$.
Thus from the definition of $\whH_+$ it follows that $\chi\in\whH_+$.
Now recall that in Step 3, $\chi$ is an arbitrary element of $\supp(\rmE^\pi)$. Consequently, $\supp(\rmE^\pi)\sseq \whH_+$.
\end{proof}
The next proposition is a converse to Proposition \ref{P-sseqH+}.

\begin{prp}
\label{adprpexGin}
Let $(\rho,\ccW)$  be a unitary representation of $H$, and $\rmE:\mathfrak B(\whH)\to \rmB(\ccW)$ denote the spectral measure corresponding to $\rho$. Assume that $\supp(\rmE)$ is a compact subset of $\whH_+$.  
Then there exists a
 $G$-invariant Hilbert space
$\ccH\sseq \cC^\infty(G,\ccW)_{\rho,\beta_u}$. 
\end{prp}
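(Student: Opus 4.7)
I will apply Proposition~\ref{pEXGi} to a holomorphic, $(G,\rho)$-invariant, positive definite kernel $K:\bS\times\oline{\bS}\to\rmB(\ccW)$ normalized by $K(p_\circ,p_\circ)=\yek_\ccW$, obtained by superposing the scalar reproducing kernels of the irreducible highest weight representations against the spectral measure $\rmE$. Since $\supp(\rmE)$ is a compact subset of $\whH_+$, Remark~\ref{rmk:i-iv} yields a unique holomorphic extension $\rho:H_\C\to\GL(\ccW)$, and the Borel functional calculus gives
\[
\rho(s)=\int\chi(s)\,d\rmE(\chi)\quad\text{and}\quad\rho(s)^*=\int\overline{\chi(s)}\,d\rmE(\chi)
\]
for every $s\in H_\C$, where $\chi$ also denotes the holomorphic extension $H_\C\to\C^\times$ of a unitary character. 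For each $\chi\in\whH_+$, let $K_\chi:\bS\times\oline{\bS}\to\C$ be the reproducing kernel of the irreducible highest weight representation of $G$ with highest weight $\chi$, realized as a $G$-invariant Hilbert space in $\cC^\infty(G,\C)_{\chi,\beta_u}$ via Propositions~\ref{rglVgqendv} and~\ref{patmston}; each $K_\chi$ is holomorphic, $(G,\chi)$-invariant, positive definite, and satisfies $K_\chi(p_\circ,p_\circ)=1$. Set
\[
K(p,p'):=\int_{\supp(\rmE)} K_\chi(p,p')\,d\rmE(\chi),
\]
interpreted weakly by $\lag K(p,p')w',w\rag:=\int K_\chi(p,p')\,d\lag\rmE(\chi)w',w\rag$ for every $w,w'\in\ccW$.

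The hypotheses of Proposition~\ref{pEXGi} are then verified in turn. The normalization $K(p_\circ,p_\circ)=\yek_\ccW$ is immediate from $K_\chi(p_\circ,p_\circ)=1$. The $(G,\rho)$-invariance is a consequence of the multiplicativity of the Borel functional calculus: since $K(p,p')$, $\rho(s^{-1})$, and $\rho({s'}^{-1})^*$ all lie in the commutative von Neumann algebra generated by $\rmE$,
\[
\rho(s^{-1})\,K(p,p')\,\rho({s'}^{-1})^*
=\int \chi(s^{-1})\,K_\chi(p,p')\,\overline{\chi({s'}^{-1})}\,d\rmE(\chi)
=K(p\cdot s,p'\cdot s'),
\]
the last equality being the $(G,\chi)$-invariance of each $K_\chi$. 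Positive definiteness is obtained from the direct integral decomposition $\ccW\cong\int^\oplus\ccW_\chi\,d\mu(\chi)$ afforded by the spectral theorem: for $w_1,\ldots,w_n\in\ccW$ and $p_1,\ldots,p_n\in\bS$,
\[
\sum_{i,j}\lag K(p_i,p_j)w_j,w_i\rag
=\int\sum_{i,j}K_\chi(p_i,p_j)\lag w_j(\chi),w_i(\chi)\rag\,d\mu(\chi)\geq 0,
\]
since each $K_\chi$ is positive definite. Holomorphy of $K$ is checked coordinatewise on matrix coefficients $(p,p')\mapsto\lag K(p,p')w',w\rag$, using holomorphy of each $K_\chi$ together with the pointwise bound $|K_\chi(p,p')|\leq K_\chi(p,p)^{1/2}K_\chi(p',p')^{1/2}$; for $p=[g,s]$ one has $K_\chi(p,p)=|\chi(s^{-1})|^2$, which is locally uniformly bounded in $\chi$ on the compact set $\supp(\rmE)$, permitting differentiation under the integral sign.

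The main technical obstacle will be verifying Borel measurability of $\chi\mapsto K_\chi(p,p')$ on $\whH_+$ for each fixed $(p,p')$, which is needed to make the spectral integral meaningful. By $(G,\chi)$-invariance this reduces to measurability of $\chi\mapsto\breve K_\chi(g,g')$ on $G\times G$, and ultimately follows from the standard real-analytic parametrization of unitarizable highest weight Virasoro modules by their highest weights and the resulting dependence of their matrix coefficients. Once the hypotheses are established, Proposition~\ref{pEXGi} produces a $G$-invariant Hilbert space $\ccH\sseq\mathcal O(\bS,\ccW)_\rho$ whose reproducing kernel is $K$, and the isomorphisms \eqref{GaMinfC-H} and \eqref{lbij*Ghol} identify it with the desired $G$-invariant Hilbert space inside $\cC^\infty(G,\ccW)_{\rho,\beta_u}$.
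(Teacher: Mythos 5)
Your kernel is, in substance, the paper's kernel: the superposition $K(p,p')=\int_{\supp(\rmE)}K_\chi(p,p')\,d\rmE(\chi)$ is exactly what the paper's construction produces. But the step you flag as ``the main technical obstacle'' is a genuine gap, not a routine verification, and it is precisely where the real work lies. Measurability (in fact one wants continuity and local uniform boundedness) of $\chi\mapsto K_\chi(p,p')$ does not follow from any ``standard real-analytic parametrization'' of unitarizable highest weight modules: $K_\chi(p,p')$ is a group-level quantity, the value of the reproducing kernel of the holomorphically induced realization of $(\pi_\chi,\ccH_\chi)$ on $\bS$, and the algebraic classification of the modules gives no control on its dependence on $\chi$. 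The paper records this continuity as Lemma \ref{LwHMMc}, and proves that lemma \emph{after}, and by means of, Proposition \ref{adprpexGin}; so invoking it here would be circular. Without it your weak integral is not even defined, and your subsequent verifications (positive definiteness via the direct integral of $\ccW$, operator-valued holomorphy via differentiation under the integral sign) have nothing to stand on, since they all require uniform-in-$\chi$ regularity of the integrand.

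The paper sidesteps the issue by never integrating by hand. It forms the Hilbert space direct sum $\ccH_\cA:=\bigoplus_{\chi\in\supp(\rmE)}\ccH_\chi$ with $\rho_\cA:=\bigoplus_{\chi}\chi$ on the sum $\ccW_\cA$ of the highest weight lines; since $\rho_\cA$ is a bounded representation, Proposition \ref{rglVgqendv} embeds $(\pi_\cA,\ccH_\cA)$ into $\cC^\infty(G,\ccW_\cA)_{\rho_\cA,\beta_u}$ and yields an honest reproducing kernel $K_\cA$, which is automatically holomorphic, positive definite and $(G,\rho_\cA)$-invariant. The arguments of Steps 1 and 2 in the proof of Proposition \ref{P-sseqH+} (holomorphy into the Banach quotient plus vanishing on the diagonal, and Gelfand theory for the commutative $C^*$-algebras generated by $\rho(H)$ and $\rho_\cA(H)$) show that $K_\cA$ takes values in $\widetilde\cA$ and that $\widetilde\rho_\cA=\bigoplus_\chi\Lambda_\chi$ is an isomorphism $\cA\to\widetilde\cA$; the desired kernel is $K:=(\widetilde\rho_\cA)^{-1}\circ K_\cA$, and holomorphy, invariance, normalization and positive definiteness are inherited through the continuous, completely positive map $(\widetilde\rho_\cA)^{-1}$, so Proposition \ref{pEXGi} applies. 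Note that the statement $K_\cA(p,p')\in\widetilde\cA\cong\cC(\supp(\rmE))$ \emph{is} the continuity of $\chi\mapsto K_\chi(p,p')$ that your approach presupposes; if you wish to keep your direct-integral formula, you must first establish that continuity, and the $C^*$-algebraic construction above is the mechanism that does it.
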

\begin{proof}
Let $\cA\sseq \mathrm B(\ccW)$ denote the $C^*$-algebra generated by $\rho(H)$. 
By an argument similar to Step~2 in the proof of Proposition \ref{P-sseqH+}, we obtain a canonical isomorphism of $C^*$-algebras
$\iota:\cC(\supp(\rmE))\to\cA$. For every $\chi\in\supp(\rmE)$, let $\Lambda_\chi\in\widehat\cA$ be the character of $\cA$ defined by 
$\Lambda_\chi\circ\iota(\psi)=\psi(\chi)$
for every $\psi\in \cC(\supp(\rmE))$, and let
$(\pi_\chi,\ccH_\chi)$ denote the irreducible highest weight representation corresponding to $\chi$.
Let $\ccW_\chi\sseq\ccH_\chi$ be the one-dimensional $\chi$-weight space of $\pi_\chi$.
Next we use a construction similar to the universal representation of a $C^*$-algebra. Set 
\[
\ccH_\cA:=\bigoplus_{\chi\in\supp(\rmE)}\ccH_\chi,\
\pi_\cA:=\bigoplus_{\chi\in\supp(\rmE)}\pi_\chi,\
\ccW_\cA:=\bigoplus_{\chi\in\supp(\rmE)}\ccW_\chi,\,
\text{ and }\,
\rho_\cA^{}:=\bigoplus_{\chi\in\supp(\rmE)}\chi,
\]
so that $(\pi_\cA,\ccH_\cA)$ is a 
representation of $G$, and $(\rho_\cA,\ccW_\cA)$ is a representation of $H$.
Set
\[
\widetilde\rho_\cA^{}:\cA\to\rmB(\ccW_\cA)\ ,\ \widetilde\rho_\cA^{}(a):=
\bigoplus_{\chi\in\supp(\rmE)}\Lambda_\chi(a).
\]
From the definition of $\Lambda_\chi$ it follows that $\Lambda_\chi(\rho(h))=\chi(h)$ for every $h\in H$, so that $\widetilde \rho_\cA^{}(\rho(h))=\rho_\cA^{}(h)$ for every $h\in H$. Moreover, 
$\widetilde \rho_\cA^{}$ is faithful because
$\widehat \cA\cong\supp(\rmE)$.
Consequently, 
elementary $C^*$-algebra theory implies that $\widetilde\rho_\cA^{}$ is an 
isomorphism of $C^*$-algebras between $\cA$ and the $C^*$-algebra $\widetilde A$ that is generated by $\rho_\cA^{}(H)$.

Since $\rho_\cA^{}$ is a bounded unitary representation of $H$, it has a unique holomorphic extension to $H_\C$ and by Proposition \ref{rglVgqendv}
we can embed $(\pi_\cA,\ccH_\cA)$  into $\cC^\infty(G,\ccW_\cA)_{\rho_\cA^{},\beta_u}$ as a $G$-invariant Hilbert space. 
Let $K_\cA:\bS\times\oline \bS\to\rmB(\ccW_\cA)$ be the $(G, \rho_\cA)$-invariant 
reproducing kernel of $\ccH_\cA$. An argument similar to  Step 1 
in the proof of Proposition \ref{P-sseqH+} shows that
$K_\cA(p,p')\in\widetilde \cA$ for every $(p,p')\in \bS\times\oline \bS$.
The map 
$(\widetilde\rho_\cA^{})^{-1}:\widetilde\cA\to \rmB(\ccW)$ is completely positive in the sense of 
\cite[Def. IV.3.3]{Takesaki}, and therefore
the kernel 
$K:\bS\times\oline \bS\to\rmB(\ccW)$, defined by $K(p,p')
:=(\widetilde\rho_\cA^{})^{-1}(K_\cA(p,p'))$,
is positive definite. Furthermore, the kernel $K$ is  
$(G,\rho)$-invariant, holomorphic, and  satisfies 
$K(p_\circ,p_\circ)=\yek_\ccW$.
Consequently, Proposition \ref{pEXGi} 
implies that there exists a $G$-invariant Hilbert space 
$\ccH\sseq \cC^\infty(G,\ccW)_{\rho,\beta_u}$ corresponding to $K$.
\end{proof}

\subsection{The direct integral $(\pi_\mu,\ccH_\mu)$}
\label{sec-dirint}
For every $\chi\in\whH_+$, let 
$(\pi_\chi,\ccH_\chi)$ denote the irreducible positive energy representation of $G$ with highest weight $\chi$. We will denote the inner product of $\ccH_\chi$ by 
$\lag\cdot,\cdot\rag_\chi$.
 Let $\mu$ be a finite positive Borel measure on $\whH_+$.
In this section we will construct a direct integral unitary representation 
\[
\pi_\mu:=\int^\oplus_{\whH_+}\pi_\chi d\mu(\chi).
\]
Throughout this section,  $(\rho_\mu,\ccW_\mu)$ will 
denote the cyclic unitary representation of $H$ corresponding to $\mu$, that is,
$\ccW_\mu:=L^2(\whH_+,d\mu)$ and  the action of 
$H$ is defined by 
\begin{equation}
\label{ad-dfrpcybd}
h\cdot f(\chi):=\chi(h)f(\chi)
\text{ \,for }
h\in H,\ \chi\in\whH_+,\ \text{and }f\in L^2(\whH_+,d\mu).
\end{equation}

By Proposition \ref{rglVgqendv}, the representation
$(\pi_\chi,\ccH_\chi)$ can be realized as a $G$-invariant Hilbert space in 
$\cC^\infty(G,\C)_{\chi,\beta_u}\cong \mathcal O(\bS,\C)_\chi$, and in fact Proposition \ref{patmston} implies that there exists a unique $(G,\chi)$-invariant holomorphic kernel $K_\chi:\bS\times \oline \bS\to\C$ satisfying $K_\chi(p_\circ,p_\circ)=\yek$.
Set $K_{\chi,p}(p'):=K_\chi(p',p)$, so that $\{K_{\chi,p}\,:\,p\in \bS\}\sseq\ccH_\chi$.

\begin{lem}
\label{LwHMMc}
The map $\whH_+\times \bS\times\oline \bS\to\C$, $(\chi,p,p')
\mapsto K_\chi(p,p')$ is continuous.
\end{lem}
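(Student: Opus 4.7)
The plan is to package all the irreducible highest weight representations over a compact set $C\sseq\whH_+$ into a single ``universal'' representation, so that the family $\{K_\chi\}_{\chi\in C}$ becomes a single operator-valued reproducing kernel whose values lie in a commutative $C^*$-algebra canonically isomorphic to $C(C)$. Joint continuity of $(\chi,p,p')\mapsto K_\chi(p,p')$ on $C\times\bS\times\oline\bS$ will then follow, and since $\whH_+$ is locally compact (as a subset of $\whH\cong\R^2$), this yields continuity on all of $\whH_+\times\bS\times\oline\bS$.

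Concretely, fix a compact $C\sseq\whH_+$ and form the Hilbert direct sum $\ccH_\cA:=\bigoplus_{\chi\in C}\ccH_\chi$ with representation $\pi_\cA:=\bigoplus\pi_\chi$. The one-dimensional highest weight subspaces assemble into $\ccW_\cA:=\bigoplus_{\chi\in C}\ccW_\chi$, on which $H$ acts through the bounded unitary representation $\rho_\cA:=\bigoplus\chi$; this extends uniquely to a holomorphic homomorphism $\rho_\cA:H_\C\to\GL(\ccW_\cA)$ by Remark \ref{rmk:i-iv}. Proposition \ref{rglVgqendv} embeds $\ccH_\cA$ as a $G$-invariant Hilbert space in $\cC^\infty(G,\ccW_\cA)_{\rho_\cA,\beta_u}$, whose reproducing kernel $K_\cA:\bS\times\oline\bS\to\rmB(\ccW_\cA)$ is, by the direct-sum construction, the diagonal operator whose $\chi$-th entry is $K_\chi(p,p')$.

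Let $\widetilde\cA\sseq\rmB(\ccW_\cA)$ be the $C^*$-algebra generated by $\rho_\cA(H)$. The characters $\chi\mapsto\chi(h)$, $h\in H$, separate points of $C$, contain the constant $1$, and are closed under complex conjugation, so by Stone--Weierstrass and the argument of Step~2 in the proof of Proposition \ref{P-sseqH+} there is a canonical identification $\widetilde\cA\cong C(C)$ under which $\widetilde\cA$ consists precisely of those diagonal operators on $\ccW_\cA$ with continuous entries on $C$. In particular, $\rho_\cA(s)\in\widetilde\cA$ for every $s\in H_\C$, since $\chi\mapsto\chi(s)$ is continuous. Now copy the argument of Step~1 in the proof of Proposition \ref{P-sseqH+}: because $K_\cA([g,s],[g,s])=\rho_\cA(s^{-1})\rho_\cA(s^{-1})^*\in\widetilde\cA$, the holomorphic map
\[
\bS\times\oline\bS\to\rmB(\ccW_\cA)/\widetilde\cA,\quad (p,p')\mapsto K_\cA(p,p')+\widetilde\cA
\]
vanishes on the diagonal and therefore vanishes identically by \cite[Prop.~A.III.7]{NeHo}. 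Hence $K_\cA(p,p')\in\widetilde\cA\cong C(C)$, and the function it represents is exactly $\chi\mapsto K_\chi(p,p')$, which is automatically continuous on $C$.

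The final step is routine. The kernel $K_\cA$ is holomorphic into $\rmB(\ccW_\cA)$ by Proposition \ref{prpEVinBe}(iv), and the operator norm on $\widetilde\cA$ coincides with the sup norm on $C(C)$; hence $(p,p')\mapsto K_\cdot(p,p')$ is continuous as a $C(C)$-valued map. This gives continuity of $(p,p')\mapsto K_\chi(p,p')$ uniformly in $\chi\in C$, which combined with continuity in $\chi$ for fixed $(p,p')$ yields joint continuity on $C\times\bS\times\oline\bS$ by a triangle inequality. The main subtlety is the containment $K_\cA(p,p')\in\widetilde\cA$, which is the structural input that converts the merely pointwise family $\{K_\chi\}_{\chi\in C}$ into a single continuously-varying object; everything else is either formal or follows from standard reproducing-kernel theory.
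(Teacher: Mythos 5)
Your proof is correct, and its core mechanism is the same as the paper's: reduce to a compact piece $C$ of $\whH_+$, package the family $\{K_\chi\}_{\chi\in C}$ into a single operator-valued kernel with values in a commutative $C^*$-algebra isomorphic to $\cC(C)$, establish that containment by the holomorphic-vanishing-on-the-diagonal argument, and then read off joint continuity. The implementation differs, though: the paper chooses a finite measure $\mu$ with $\supp(\mu)=\oline U$, invokes Proposition \ref{adprpexGin} to obtain a $G$-invariant Hilbert space in $\cC^\infty(G,\ccW_\mu)_{\rho_\mu,\beta_u}$ whose kernel $K_{\rho_\mu}$ takes values in $\cA_\mu\cong\cC(\supp(\mu))$, identifies $K_\chi=\Lambda_\chi\circ K_{\rho_\mu}$ via uniqueness of the normalized invariant kernel, and concludes from continuity of the pairing $\cA_\mu\times\widehat\cA_\mu\to\C$; you instead inline the universal direct sum $\bigoplus_{\chi\in C}\ccH_\chi$ (which is precisely the engine inside the paper's proof of Proposition \ref{adprpexGin}), identify the kernel algebra with $\cC(C)$ by Stone--Weierstrass, and recover $K_\chi$ as the diagonal entries, so that continuity in $(p,p')$ uniformly in $\chi$ comes for free from the sup norm. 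Your route avoids the choice of measure and the appeal to Proposition \ref{adprpexGin}, and makes the uniformity in $\chi$ explicit; the cost is that $\bigoplus_{\chi\in C}\ccH_\chi$ is nonseparable (harmless) and that you should explicitly verify the hypotheses of Proposition \ref{rglVgqendv} for $(\pi_\cA,\ccH_\cA)$ (smoothness, positive energy, density of $\pi_\cA(G)\ccW_\cA$ and of $(\ccH_\cA^\infty)^{\g n}\cap\ccW_\cA$), and note that identifying the $\chi$-th diagonal block of $K_\cA$ with $K_\chi$ rests on the uniqueness statement of Proposition \ref{patmston}; these points are routine and are handled implicitly by the paper inside Proposition \ref{adprpexGin}, but they deserve a line in your write-up.
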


\begin{proof}
It suffices to prove the statement for every subset of  the form $U\times 
\bS\times\oline \bS$, where $U\sseq \whH_+$ is open and relatively compact.
Choose a finite positive Borel measure $\mu$  on $\whH_+$ 
such that $\supp(\mu)=\oline U$.
By Proposition
\ref{adprpexGin} there exists a $G$-invariant Hilbert space 
$\ccH_{\rho_\mu}\sseq \cC^\infty(G,\ccW_\mu)_{\rho_\mu,\beta_u}$. 
Let 
\begin{equation}
\label{Krhmu-}
K_{\rho_\mu}:\bS\times\oline \bS\to\mathrm B(\ccW_\mu)
\end{equation} 
be the reproducing kernel of $\ccH_{\rho_\mu}$, as defined
in \eqref{RKMtM}.
By Proposition \ref{prpEVinBe}(iv), $K_{\rho_\mu}$ is holomorphic. Let $\cA_\mu\sseq\rmB(\ccW_\mu)$ denote the $C^*$-algebra generated by $\rho_\mu(H)$.
An argument similar to Step 1 in the proof of Proposition \ref{P-sseqH+} shows that $K_{\rho_\mu}(p,p')\in\cA_\mu$ for every $(p,p')\in \bS\times \oline \bS$. 
By an argument similar to Step 2 in the proof of 
Proposition \ref{P-sseqH+}, we obtain a canonical isomorphism of $C^*$-algebras 
$\iota^\mu:\cC(\supp(\mu))\to \cA_\mu$.
For every $\chi\in\supp(\mu)$, 
let $\Lambda_\chi:\cA_\mu\to\C$ be the character of $\cA_\mu$ that is defined by 
the relation 
$\Lambda_\chi\circ\iota^\mu(\psi)=\psi(\chi)$ for every $\psi\in \cC(\supp(\mu))$. 
An argument similar to Step 3 in the proof of 
Proposition \ref{P-sseqH+} shows that
$\Lambda_\chi\circ K_{\rho_\mu}$ is  holomorphic, $(G,\chi)$-invariant, 
and positive definite. Moreover,  $\Lambda_\chi\circ K_{\rho_\mu}(p_\circ,p_\circ)
=\yek$. Uniqueness of the reproducing kernel of $\ccH_\chi$ implies that 
\begin{equation}
\label{add-H+MolM}
K_\chi(p,p')=\Lambda_\chi(K_{\rho_\mu}(p,p'))
\text{ for every }\chi\in\supp(\mu)\text{ and every } (p,p')\in 
\bS\times \oline \bS.
\end{equation}
The statement of the lemma now follows from continuity of the canonical map 
$\cA_\mu\times\widehat{\cA}_\mu\to\C$.
\end{proof}

To construct the direct integral  
$
\pi_\mu=\int_{\whH_+}^\oplus\pi_\chi d\mu(\chi)
$,
we consider the family
$\{\ccH_\chi\, :\, \chi\in\whH_+\}$
as a field of Hilbert spaces  over $\whH_+$ (in the sense of \cite[Chap.~II]{Dix69}). Fix  a countable dense subset $\{p_n\}_{n=1}^\infty$ of $\bS$, and 
consider the
sections $\bfe_n\in\prod_{\chi\in \whH_+}\ccH_\chi$, defined by $\bfe_n(\chi):=K_{\chi,p_n}$ for every 
$\chi\in\whH_+$ and every $n\in\N$.
\begin{lem}
\label{add-lm-msrbity}
The sections $\bfe_n$, $n\in\N$, satisfy the following properties.
\begin{itemize}
\item[{\upshape(i)}] 
For every $\chi\in\whH_+$, the set $\{\bfe_n(\chi)\}_{n=1}^\infty$ is a total subset of $\ccH_\chi$.
\item[{\upshape(ii)}] For every $n,n'\in \N$, the map 
$
\whH_+\to\C
$, $\chi\mapsto\lag\bfe_n(\chi),\bfe_{n'}(\chi)\rag_\chi$,
is measurable.
\item[{\upshape(iii)}] For every $g\in G$, the operator field $\{\pi_\chi(g)\,:\,\chi\in\whH_+\}$ is measurable.
\end{itemize}
\end{lem}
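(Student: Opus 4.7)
The plan is to reduce all three assertions to Lemma~\ref{LwHMMc} combined with standard facts about reproducing kernel Hilbert spaces, using the realization $\ccH_\chi \hookrightarrow \cO(\bS,\C)_\chi$ supplied by \eqref{GaMinfC-H} and \eqref{lbij*Ghol}. Throughout I will use the reproducing property $\lag f, K_{\chi,p}\rag_\chi = f(p)$ for $f \in \ccH_\chi$.

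For (i), I would first invoke the general fact that $\{K_{\chi,p} : p \in \bS\}$ is always total in $\ccH_\chi$, since any $f$ orthogonal to all such vectors satisfies $f(p)=0$ for every $p$. To pass from all of $\bS$ to the countable dense set $\{p_n\}$, I would prove continuity of the map $p \mapsto K_{\chi,p}$ from $\bS$ to $\ccH_\chi$ via the identity
\[
\|K_{\chi,p}-K_{\chi,p'}\|_\chi^2 = K_\chi(p,p)-K_\chi(p,p')-K_\chi(p',p)+K_\chi(p',p'),
\]
whose right-hand side tends to $0$ as $p' \to p$ because $K_\chi$ is holomorphic on $\bS \times \oline{\bS}$ and therefore continuous. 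Density of $\{p_n\}$ in $\bS$ then gives totality of $\{\bfe_n(\chi)\}_{n=1}^\infty$.

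For (ii), the reproducing property reduces the inner product to a value of the kernel:
\[
\lag \bfe_n(\chi),\bfe_{n'}(\chi)\rag_\chi = \lag K_{\chi,p_n},K_{\chi,p_{n'}}\rag_\chi = K_{\chi,p_n}(p_{n'}) = K_\chi(p_{n'},p_n),
\]
and Lemma~\ref{LwHMMc} says this is continuous in $\chi$, hence Borel measurable. For (iii), I would use the standard characterization of measurability for an operator field (see \cite[Ch.~II]{Dix69}): it suffices to show that $\chi \mapsto \lag \pi_\chi(g)\bfe_n(\chi),\bfe_{n'}(\chi)\rag_\chi$ is measurable for every pair $n,n'$. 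Realizing $\pi_\chi$ as left translation on $\cO(\bS,\C)_\chi$ via $(g\cdot f)(p)=f(g^{-1}\cdot p)$, the reproducing property yields
\[
\lag \pi_\chi(g) K_{\chi,p_n}, K_{\chi,p_{n'}}\rag_\chi = (\pi_\chi(g)K_{\chi,p_n})(p_{n'}) = K_\chi(g^{-1}\!\cdot p_{n'},\, p_n),
\]
which is again continuous in $\chi$ by Lemma~\ref{LwHMMc}.

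The only conceptual point to be careful about is the appeal to Dixmier's criterion in (iii) -- that measurability of an operator field with respect to a fundamental sequence is equivalent to measurability of the corresponding matrix coefficients. Apart from this bookkeeping, there is no genuine obstacle: Lemma~\ref{LwHMMc} carries all of the analytic content, and each part reduces to a one-line reproducing-kernel computation. The continuity (and therefore measurability) comes for free because the kernel $K_{\rho_\mu}$ built in the proof of Lemma~\ref{LwHMMc} lets one vary $\chi$ continuously within a relatively compact neighborhood by applying characters of the $C^*$-algebra $\cA_\mu$.
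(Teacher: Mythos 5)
Your proposal is correct and follows essentially the same route as the paper: totality of the kernel vectors plus the norm identity $\|K_{\chi,p}-K_{\chi,p'}\|_\chi^2=K_\chi(p,p)-K_\chi(p,p')-K_\chi(p',p)+K_\chi(p',p')$ for (i), the identity $\lag\bfe_n(\chi),\bfe_{n'}(\chi)\rag_\chi=K_\chi(p_{n'},p_n)$ with Lemma~\ref{LwHMMc} for (ii), and the $(G,\chi)$-invariance computation $\lag\pi_\chi(g)K_{\chi,p_n},K_{\chi,p_{n'}}\rag_\chi=K_\chi(g^{-1}\cdot p_{n'},p_n)$ together with Dixmier's matrix-coefficient criterion for (iii). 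The only cosmetic difference is that you verify totality of $\{K_{\chi,p}:p\in\bS\}$ directly from the reproducing property, whereas the paper cites Proposition~\ref{prpEVinBe}(i); both are fine.
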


\begin{proof}
(i) 
Recall that 
$\lag K_{\chi,p},K_{\chi,p'}\rag_\chi=K_\chi(p',p)$.
Thus, for every $n\in \N$, we have
\begin{align}
\label{K--K}
\lag K_{\chi,p}-K_{\chi,p_n},K_{\chi,p}-K_{\chi,p_n}\rag_\chi
=
K_\chi(p,p)+K_\chi(p_n,p_n)-K_\chi(p,p_n)-K_\chi(p_n,p).
\end{align}
Now let $\{p'_k\}_{k=1}^\infty$ be a subsequence of $\{p_{n}\}_{n=1}^\infty$
that converges to a point $p\in \bS$. 
Since $K_\chi$ is continuous, from \eqref{K--K} it follows that
$\lim_{k\to\infty}K_{\chi,p'_k}= K_{\chi,p}$ in $\ccH_\chi$. Furthermore, 
Proposition \ref{prpEVinBe}(i) states that
$\{K_{\chi,p}\, :\, p\in M\}$ is a total subset of $\ccH_\chi$. 
Therefore $\{K_{\chi,p_n}\}_{n=1}^\infty$ is a total subset of $\ccH_\chi$.

(ii) Follows from Lemma \ref{LwHMMc} because  $\lag\bfe_n(\chi),\bfe_{n'}(\chi)\rag_\chi=K_\chi(p_{n'},p_{n})$.

(iii) By $(G,\chi)$-invariance of $K_\chi$, we can write
$
\lag\pi_\chi(g)K_{\chi,p_n},K_{\chi,p_{n'}}\rag
=K_\chi(g^{-1}\cdot p_{n'},p_n)$.
Therefore, by Lemma \ref{LwHMMc}, the map
$\chi\mapsto \lag\pi_\chi(g)K_{\chi,p_n},K_{\chi,p_{n'}}\rag
$ is continuous.
The statement is now proved using a  
standard pointwise convergence argument
based on \cite[Chap.~1, \S 1, Lem. 1]{Dix}.
\end{proof}
We are now in a position to define $(\pi_\mu,\ccH_\mu)$. A section $\bfe\in \prod_{\chi\in\whH_+}\ccH_\chi$ is called \emph{measurable} if, for every $n\in\N$, the map 
\[
\whH_+\to\C\ ,\ 
\chi\mapsto \lag\bfe(\chi),\bfe_n(\chi)\rag_\chi
\]
is measurable. By standard facts in the theory of direct integral Hilbert spaces (see \cite{Dix} or \cite[Appendix II]{NeHo}),
Lemma \ref{add-lm-msrbity} implies that
the set $\ccH_\mu$, consisting of all  measurable sections $\bfe \in \prod_{\chi\in\whH_+}\ccH_\chi$ which satisfy 
$\int_{\whH_+}\|\bfe(\chi)\|_\chi^2d\mu(\chi)<\infty$,
is a separable Hilbert space with inner product
$\lag \bfe,\bfe'\rag:=\int_{\whH_+}\lag \bfe(\chi),\bfe'(\chi)\rag_\chi d\mu(\chi)$.
Setting 
\[
\left(\pi_\mu(g)\bfe\right)(\chi):=\pi_\chi(g)\bfe(\chi)
\text{ for $g\in G$, $\bfe\in\ccH_\mu$, and $\chi\in\whH_+$},
\] 
we obtain a representation of  
$G$ on $\ccH_\mu$ by unitary operators (but we still need to prove that $\pi_\mu$ is a smooth unitary representation of positive energy).

\begin{rmk}
\label{rmkCktnK}
Let $\whH_+=\bigcup_{n=1}^\infty X_n$
be a  partition of $\whH_+$ into measurable and 
relatively compact subsets, and set $\mu_n(\Omega):=\mu(\Omega\cap X_n)$
for every $\Omega\in\mathfrak B(\whH_+)$ and every $n\in\N$. Then
$
\mu=\sum_{n=1}^\infty\mu_n
$, and the $\mu_n$ are mutually singular measures with compact support. 
Moreover, 
$\pi_\mu\cong\bigoplus_{n=1}^\infty\pi_{\mu_n^{}}$.
\end{rmk}

\begin{prp}
\label{piMupE}
Let $\mu$ be a finite positive Borel measure on $\whH_+$. Then $(\pi_\mu,\ccH_\mu)$ is a positive energy representation of $G$.
\end{prp}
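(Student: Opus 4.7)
The plan is to reduce to compactly supported $\mu$ via Remark~\ref{rmkCktnK} and then exploit the universal reproducing kernel $K_{\rho_\mu}$ to obtain strong continuity, smoothness, and spectral positivity in turn.

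First I would invoke Remark~\ref{rmkCktnK} to decompose $\mu=\sum_{n=1}^\infty\mu_n$ with mutually singular compactly supported $\mu_n$, so that $\pi_\mu\cong\bigoplus_n\pi_{\mu_n}$. Strong continuity, density of smooth vectors, and the spectral condition $\Spec(-\dd\pi(\bd_0))\sseq[0,\infty)$ all pass to countable Hilbert direct sums, so it suffices to handle the case where $\supp(\mu)$ is compact.

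Assuming $\supp(\mu)$ is compact, Proposition~\ref{adprpexGin} produces a $G$-invariant Hilbert space $\ccH_{\rho_\mu}\sseq \cC^\infty(G,\ccW_\mu)_{\rho_\mu,\beta_u}$ whose reproducing kernel $K_{\rho_\mu}:\bS\times\oline{\bS}\to\rmB(\ccW_\mu)$ satisfies $K_\chi=\Lambda_\chi\circ K_{\rho_\mu}$ on $\supp(\mu)$ by \eqref{add-H+MolM}. On the total family $\{\bfe_n\}$ of Lemma~\ref{add-lm-msrbity}(i),
\[
\lag\pi_\mu(g)\bfe_n,\bfe_n\rag_\mu
=\int_{\supp(\mu)}K_\chi(g^{-1}\cdot p_n,p_n)\,d\mu(\chi)
=\int_{\supp(\mu)}\Lambda_\chi\big(K_{\rho_\mu}(g^{-1}\cdot p_n,p_n)\big)\,d\mu(\chi).
\]
Since $K_{\rho_\mu}$ is holomorphic as a $\rmB(\ccW_\mu)$-valued function (Proposition~\ref{prpEVinBe}(iv)) and every character $\Lambda_\chi$ of a $C^*$-algebra is contractive, I may differentiate under the integral (using finiteness of $\mu$) to conclude that each such diagonal matrix coefficient is smooth on $G$. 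Then \cite[Thm 7.2]{nediff} gives $\bfe_n\in \ccH_\mu^\infty$, and totality of $\{\bfe_n\}$ in $\ccH_\mu$ yields smoothness of $(\pi_\mu,\ccH_\mu)$. Strong continuity of $\pi_\mu$ follows from the same integral representation via dominated convergence and continuity of $K_\chi$ (Lemma~\ref{LwHMMc}).

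Finally, each irreducible highest weight representation $(\pi_\chi,\ccH_\chi)$ satisfies $-\dd\pi_\chi(\bd_0)\geq 0$. Since $-\dd\pi_\mu(\bd_0)$ is the direct integral of these positive self-adjoint operators, its spectrum is contained in $\oline{\bigcup_{\chi\in\supp(\mu)}\Spec(-\dd\pi_\chi(\bd_0))}\sseq[0,\infty)$. The main obstacle is the justification of differentiation under the integral for smoothness; this is made feasible by working with the universal kernel $K_{\rho_\mu}$, whose holomorphy furnishes derivative bounds uniform in $\chi$ through the contractive characters $\Lambda_\chi$—such uniform bounds are not obviously available from the individual scalar kernels $K_\chi$ alone.
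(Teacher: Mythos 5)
Your overall strategy (reduce to compact support via Remark~\ref{rmkCktnK}, dominate the scalar kernels through $K_\chi=\Lambda_\chi\circ K_{\rho_\mu}$ using contractivity of the characters, differentiate under the integral, invoke \cite[Thm 7.2]{nediff}, and get spectral positivity from the fiberwise positivity of $-\dd\pi_\chi(\bd_0)$) is the same as the paper's. But there is a genuine gap at the density step: you claim that the sections $\bfe_n(\chi)=K_{\chi,p_n}$ are total in $\ccH_\mu$, citing Lemma~\ref{add-lm-msrbity}(i). That lemma only gives \emph{fiberwise} totality, i.e.\ that $\{\bfe_n(\chi)\}_n$ is total in each $\ccH_\chi$; it does not give totality of the sections $\bfe_n$ in the direct integral. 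In a direct integral over a non-atomic measure, the closed span of countably many sections with \emph{constant} coefficients is in general a proper subspace — to generate a dense subspace one must also multiply by $L^\infty(\mu)$-functions (already in the trivial case where all fibers are one-dimensional, constant multiples of a single nonvanishing section are not dense in $L^2(\mu)$). So smoothness of the $\bfe_n$ alone does not yield a dense set of smooth vectors as you argue. (One could try instead to use that $\ccH_\mu^\infty$ is $G$-invariant and argue that the $\bfe_n$ are $G$-cyclic, but that requires identifying the commutant of $\pi_\mu(G)$ with the diagonal algebra, i.e.\ disjointness of the $\pi_\chi$ in the measurable-field sense, which is not available at this point of the paper.)

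The paper closes exactly this gap by enlarging the family: it considers $\bfe_{f,p}(\chi):=f(\chi)K_{\chi,p}$ for arbitrary $f\in L^\infty(\whH_+,\mu)$ and $p\in\bS$, and cites \cite[Prop.~II.1.7]{Dix69} for the standard fact that the $L^\infty(\mu)$-module generated by a fundamental fiberwise-total family spans a dense subspace of the direct integral. Your domination argument then applies verbatim to these vectors, since
\[
\bigl|f(\chi)\,K_\chi(g_\circ^{-1}\cdot p,p)\bigr|\;\leq\;\|f\|_\infty\,\bigl\|K_{\rho_\mu}(g_\circ^{-1}\cdot p,p)\bigr\|
\]
and likewise for all directional derivatives, because $\Lambda_\chi$ is linear and contractive; differentiation under the integral sign (\cite[Lem.~2.1]{NSZ14}) and \cite[Thm 7.2]{nediff} then show that every $\bfe_{f,p}$ is a smooth vector, and density of these vectors gives smoothness of $(\pi_\mu,\ccH_\mu)$. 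With that repair your argument coincides with the paper's; the remaining points (the reduction to compactly supported $\mu$ and the positivity of the generator of the direct integral of the one-parameter groups $t\mapsto\pi_\chi(e^{ti\bd_0})$, for which the paper cites \cite[Thm X.6.16]{NeHo}) are fine.
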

\begin{proof}
Because of Remark \ref{rmkCktnK}, we can assume
that
$\supp(\mu)$ is compact.
Let $D_\circ\cong\R $ be the one-parameter subgroup 
$e^{\R i\bd_0}\sseq G$.
Using
\cite[Thm X.6.16]{NeHo} for the restriction $\pi_\mu\big|_{D_\circ}$,
we obtain $\Spec(-\dd\pi_\mu(\bd_0))\sseq
\R^+\cup\{0\}$. 
Next we   
prove that $(\pi_\mu,\ccH_\mu)$ is a smooth unitary representation of $G$.   Given 
$f\in L^\infty(\widehat{H}_+,\mu)$ and $m\in M$, we set 
\begin{equation}
\label{Efmdf}
\bfe^{}_{f,p}\in \prod_{\chi\in\whH_+}\ccH_\chi\ ,\ \bfe_{f,p}(\chi):=f(\chi)K_{\chi,p}.
\end{equation}
Lemma \ref{LwHMMc} implies that
$\bfe^{}_{f,p}\in\ccH_\mu$, and 
\cite[Prop. II.1.7]{Dix69} implies that the fields $\bfe_{f,p}^{}$ 
generate a dense subspace of $\ccH_\mu$. For every $g_\circ\in G$, we have 
\begin{align*}
\lag \pi_\mu(g_\circ)\bfe_{f,p}^{},\bfe_{f,p}^{}\rag
&
=
\int_{\whH_+}f(\chi)
\lag\pi_\chi(g_\circ)K_{\chi,p},K_{\chi,p}\rag_\chi\,d\mu(\chi)
=\int_{\whH_+}f(\chi)K_\chi(g_\circ^{-1}\cdot p, p)\,d\mu(\chi).
\end{align*}  
The map $G\to\C$,
$g_\circ\mapsto K_\chi(g_\circ^{-1}\cdot p,p)$ is smooth. 
From \eqref{add-H+MolM}
we obtain
\begin{equation}
\label{ad-pmumfm}
|K_\chi(g_\circ^{-1}\cdot p,p)|
=|\Lambda_\chi(K_{\rho_\mu}(g_\circ^{-1}\cdot p,p))|
\leq 
\|K_{\rho_\mu}(g_\circ^{-1}\cdot p,p)\|,
\end{equation}
where $K_{\rho_\mu^{}}$ is defined as in 
\eqref{Krhmu-}.
From linearity of $\Lambda_\chi$ it follows that an  inequality similar 
to \eqref{ad-pmumfm} holds for 
directional derivatives of $K_\chi(g_\circ^{-1}\cdot p,p)$. 
By  differentiating under the integral sign   
(\cite[Lem. 2.1]{NSZ14})
it follows that the map $g_\circ\mapsto \lag \pi_\mu(g_\circ)\bfe_{f,p},\bfe_{f,p}\rag$ is smooth. Finally, from \cite[Thm 7.2]{nediff} 
 it follows that $\bfe_{f,p}$ is a smooth vector for $(\pi_\mu,\ccH_\mu)$.
\end{proof}

\begin{prp}
\label{adWolueH}
Let $\mu$ be a finite positive Borel measure on $\whH_+$. Let $\ccW^{\pi_\mu}:=\oline{(\ccH_\mu^\infty)^{\g n}}$ and set 
$\rho^{\pi_\mu}(h):=\pi_\mu(h)\big|_\ccW$ for every $h\in H$. Then 
$(\rho^{\pi_\mu},\ccW^{\pi_\mu})$ and $(\rho_\mu,\ccW_\mu)$ are unitarily equivalent. 
\end{prp}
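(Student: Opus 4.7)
The approach is to exhibit an explicit unitary $H$-equivalence
$$U: \ccW_\mu = L^2(\whH_+,d\mu) \to \ccH_\mu, \qquad (Uf)(\chi) := f(\chi)\, K_{\chi,p_\circ},$$
where $p_\circ := [\yek,\yek]\in\bS$, and then to show that $\mathrm{Im}(U) = \ccW^{\pi_\mu}$. Measurability of the section $\chi\mapsto K_{\chi,p_\circ}$ follows from Lemma~\ref{LwHMMc}; the identity $\|K_{\chi,p_\circ}\|_\chi^2 = K_\chi(p_\circ,p_\circ) = 1$ makes $U$ an isometry, and because $\pi_\chi(h)K_{\chi,p_\circ} = \chi(h)K_{\chi,p_\circ}$ (the highest-weight property) together with \eqref{ad-dfrpcybd}, we have $\pi_\mu(h)\,Uf = U(h\cdot f)$ for every $h\in H$, so $U$ intertwines the two $H$-representations.

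For the inclusion $\mathrm{Im}(U)\subseteq\ccW^{\pi_\mu}$, it suffices to verify that $Uf\in(\ccH_\mu^\infty)^{\g n}$ for every $f\in L^\infty(\whH_+,d\mu)$ (a dense subset of $\ccW_\mu$ since $\mu$ is finite). For such $f$, $Uf = \bfe_{f,p_\circ}$ is a smooth vector for $\pi_\mu$ by the argument in the proof of Proposition~\ref{piMupE}. The operators $\pi_\mu(e^{tx})$ are decomposable, so on their domains the generators $\dd\pi_\mu(x)$ act fiberwise as $\dd\pi_\chi(x)$; extending $\C$-linearly to $x\in\g n$ and using $\dd\pi_\chi(x)K_{\chi,p_\circ}=0$ gives
$$(\dd\pi_\mu(x)Uf)(\chi) = f(\chi)\,\dd\pi_\chi(x)K_{\chi,p_\circ} = 0 \quad \text{a.e.},$$
hence $\dd\pi_\mu(x)Uf = 0$, as required.

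The reverse inclusion $\ccW^{\pi_\mu}\subseteq \mathrm{Im}(U)$ is the substantive step. Let $v\in(\ccH_\mu^\infty)^{\g n}$. Applying the standard fiberwise decomposition of the self-adjoint generators $i\dd\pi_\mu(y)$ ($y\in\g g$) on a direct integral of unitary representations, extending $\C$-linearly, and intersecting countably many full-measure sets indexed by the $\C$-basis $\{\bd_n\}_{n\geq 1}$ of $\g n$, I obtain a measurable subset of full $\mu$-measure on which $v(\chi)$ lies in $\mathrm{Dom}(\dd\pi_\chi(\bd_n))$ and $\dd\pi_\chi(\bd_n)v(\chi) = 0$ for every $n\geq 1$. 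For each such $\chi$, decomposing $v(\chi)=\sum_E v_E(\chi)$ along the discrete spectrum of $-\dd\pi_\chi(\bd_0)$ into its finite-dimensional eigenspaces (a standard feature of irreducible highest-weight Virasoro modules) and using that $\dd\pi_\chi(\bd_n)$ shifts the grading by $-n$ forces $\dd\pi_\chi(\bd_n)v_E(\chi)=0$ for every $E$ and $n\geq 1$; each $v_E(\chi)$ is smooth (being finite-dimensional), so $v_E(\chi)\in(\ccH_\chi^\infty)^{\g n} = \C K_{\chi,p_\circ}$, a line living in the minimal energy eigenspace (cf.\ Section~\ref{subsecTspec}). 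Orthogonality of the energy decomposition then forces $v_E(\chi)=0$ for non-minimal $E$, so $v(\chi) = f(\chi)K_{\chi,p_\circ}$ for the measurable scalar $f(\chi):=\lag v(\chi),K_{\chi,p_\circ}\rag_\chi$, and the identity $\|v\|_{\ccH_\mu}^2 = \int|f|^2\,d\mu$ places $f\in\ccW_\mu$ and yields $v=Uf$.

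The main obstacle is the first step of the reverse inclusion, namely transferring the annihilation condition $\dd\pi_\mu(x)v=0$ from the direct integral to a set of full $\mu$-measure simultaneously for every $x\in\g n$. This requires a careful application of the fiberwise decomposability of the unbounded self-adjoint generators, together with the countability of a $\C$-basis of $\g n$ to form the required intersection of a.e.\ sets. Once the annihilation has been transferred, the highest-weight structure of each $\pi_\chi$ reduces the conclusion to linear algebra on each energy eigenspace.
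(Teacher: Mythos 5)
Your argument is correct, and its skeleton is the same as the paper's: identify $\ccW^{\pi_\mu}=\oline{(\ccH_\mu^\infty)^{\g n}}$ with the closed span of the sections $\chi\mapsto f(\chi)K_{\chi,p_\circ}$, note $\|K_{\chi,p_\circ}\|_\chi=1$ and $\pi_\chi(h)K_{\chi,p_\circ}=\chi(h)K_{\chi,p_\circ}$ to match the $H$-action with \eqref{ad-dfrpcybd}, and prove both inclusions fiberwise. Where you genuinely differ is in the technical engine for the fiberwise statements. The paper invokes Arnal's lemma \cite[Lem.~2]{Arnal} (applied to one-parameter subgroups) twice: once to get $\bfe_{f,p_\circ}\in(\ccH_\mu^\infty)^{\g n}$, and once to produce a single null set off which $\dd\pi_\chi(x)\bfe(\chi)=0$ for all $x\in\g n$, after which it concludes via $(\ccH_\chi^\infty)^{\g n}=\C K_{\chi,p_\circ}$ from Proposition \ref{rglVgqendv}; this last step tacitly uses that $\bfe(\chi)$ is a.e.\ a smooth vector of $\ccH_\chi$, which is part of what Arnal's lemma supplies. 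You replace this by the decomposability of the Stone generators of the groups $\pi_\mu(e^{ty})$, $y\in\g g$, plus a per-fiber highest-weight argument: since you cannot assert that $v(\chi)$ is smooth, you project onto the finite-dimensional $\bd_0$-eigenspaces (which do consist of smooth vectors), use the grading shift by $-n$, and only then apply $(\ccH_\chi^\infty)^{\g n}=\C K_{\chi,p_\circ}$. This makes the proof independent of Arnal's lemma and sidesteps the fiberwise-smoothness issue, at the price of using the finite-dimensionality of the energy spaces of the irreducible modules; when writing out the word ``forces'', justify it by pairing $\overline{\dd\pi_\chi(\bd_n)}\,v(\chi)$ against smooth vectors of the $(E-n)$-eigenspace, using skew-symmetry and $\oline{\bd}_n=-\bd_{-n}$. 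Two small repairs: to quote the smoothness of $\bfe_{f,p}$ from the proof of Proposition \ref{piMupE} you should, as the paper does, first reduce to the case that $\supp(\mu)$ is compact via Remark \ref{rmkCktnK}, because that smoothness argument rests on Proposition \ref{adprpexGin}, which requires compact support; and measurability of $\chi\mapsto K_{\chi,p_\circ}$ should be checked against the fundamental sections $\bfe_n(\chi)=K_{\chi,p_n}$, which indeed follows from Lemma \ref{LwHMMc} as you indicate.
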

\begin{proof}
By Remark \ref{rmkCktnK} we can assume that $\supp(\mu)$ is compact.
By the proof of Proposition \ref{piMupE}, the sections $\bfe_{f,p}^{}$, 
defined in \eqref{Efmdf}, belong to $\ccH^\infty_\mu$.  Applying \cite[Lem. 2]{Arnal} to 
one-parameter subgroups of $G$ implies that 
$\bfe_{f,p_\circ}^{}\in(\ccH^\infty_\mu)^{\g n}$.
Therefore, to complete the proof of the proposition, it suffices to show  
that if a section $\bfe\in\ccH_\mu^\infty$ satisfies $\bfe\in(\ccH^\infty_\mu)^{\g n}$, 
then 
$\bfe(\chi)\in\C K_{\chi,p_\circ}$ 
for almost every $\chi\in\whH_+$. 
To prove the latter claim, again we note that 
\cite[Lem. 2]{Arnal} implies that 
there exists a measurable set $N\sseq \whH_+$ with
$\mu(N)=0$ such that $\dd\pi_\chi(x)\bfe(\chi)=0$ for every $x\in\g n$ and every $\chi\in\whH_+\bls N$.
(Here we use the fact that for every $v\in\ccH^\infty_\mu$, the map $\gC\to\ccH$, $x\mapsto \dd\pi_\mu(x)v$ is continuous.)
Finally, Proposition \ref{rglVgqendv}
implies that $(\ccH_\chi^{\infty})^{\g n}=\C K_{\chi,p_\circ}$.
%
\end{proof}

The next theorem is our main result. It provides a complete characterization of positive energy representations of $G$.

\begin{thm}
\label{thm-MAIN}
Let $(\pi,\ccH)$ be a positive energy representation of $G$. Then 
\begin{equation}
\label{adpim12n}
\pi\cong \pi_{\mu_1^{}}\oplus 2\pi_{\mu_2^{}}
\oplus 3\pi_{\mu_3^{}}\oplus\cdots\oplus\infty\,\pi_{\mu_{\infty}^{}}
\end{equation}
where $\mu_1,\mu_2,\ldots,\mu_{\infty}$ are  mutually singular  finite positive measures on $\whH_+$, and the $\pi_{\mu_n^{}}$ are constructed as in 
\emph{Section \ref{sec-dirint}}.
Moreover, the measures $\mu_n, n\in\N\cup\{\infty\}$, are unique up to equivalence.
\end{thm}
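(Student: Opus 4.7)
The plan is to combine the reduction results already assembled in Section~\ref{sec-ViraS} with the classical spectral multiplicity theorem for unitary representations of the abelian Lie group $H$, using Proposition~\ref{adprp-Gi-ii-iii}(v) as the bridge between decompositions of $\ccW^\pi$ and decompositions of $\ccH$.

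First I would reduce to the bounded type case via Proposition~\ref{pipij}: write $\pi=\bigoplus_j\pi_j$ with each $\pi_j$ of bounded type and with the $\mathrm E^{\pi_j}$ mutually singular. It suffices then to prove the theorem for a single bounded type summand, since mutual singularity of the $\mathrm E^{\pi_j}$ will guarantee that measures coming from different $j$'s remain mutually singular after being grouped by multiplicity. So assume $(\pi,\ccH)$ is of bounded type; set $\ccW^\pi:=\oline{(\ccH^\infty)^{\g n}}$, $\rho^\pi:=\pi|_H|_{\ccW^\pi}$, and extend $\rho^\pi$ holomorphically to $H_\C$ via Remark~\ref{rmk:i-iv}. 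By Proposition~\ref{prpPoScyq} the hypotheses of Proposition~\ref{rglVgqendv} are satisfied, so $(\pi,\ccH)$ embeds as a $G$-invariant Hilbert space in $\cC^\infty(G,\ccW^\pi)_{\rho^\pi,\beta_u}$, with $\ccW^\pi=(\ccH^\infty)^{\g n}$.

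Next I would apply the spectral multiplicity theorem to $(\rho^\pi,\ccW^\pi)$, a norm-continuous unitary representation of the abelian group $H$ whose spectral measure is supported in $\widehat H_+$ by Proposition~\ref{P-sseqH+}. This yields mutually singular finite positive Borel measures $\mu_1,\mu_2,\ldots,\mu_\infty$ on $\widehat H_+$ and an $H$-equivariant unitary isomorphism
\[
\ccW^\pi\;\cong\;\bigoplus_{n\in\N\cup\{\infty\}}L^2(\widehat H_+,\mu_n)\otimes\C^n.
\]
Because $\rho^\pi(H_\C)$ is the holomorphic extension of $\rho^\pi(H)$, every closed $\rho^\pi(H)$-invariant subspace is automatically $\dd\rho^\pi(\hC)$-invariant and hence $\rho^\pi(H_\C)$-invariant. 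Thus the hypothesis of Proposition~\ref{adprp-Gi-ii-iii}(v) applies, and the central projections effecting the above decomposition of $\ccW^\pi$ lie in the image of $\pi(G)'\to\rho^\pi(H)'$. Consequently the decomposition lifts to a $G$-invariant orthogonal splitting $\ccH=\bigoplus_n\ccH_n$ with $\ccW^{\pi_n}=L^2(\widehat H_+,\mu_n)\otimes\C^n$.

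It then remains to identify each summand $(\pi_n,\ccH_n)$ with $n\,\pi_{\mu_n}$. The direct integral construction of Section~\ref{sec-dirint} provides $\pi_{\mu_n}\otimes\yek_{\C^n}$ as a $G$-invariant Hilbert space inside $\cC^\infty(G,L^2(\widehat H_+,\mu_n)\otimes\C^n)_{\rho,\beta_u}$, and Proposition~\ref{adWolueH} verifies that its corresponding $\ccW$-space agrees with $\ccW^{\pi_n}$. Since $\pi_n$ also realizes a $G$-invariant Hilbert space in the same $\cC^\infty(G,\ccW^{\pi_n})_{\rho^{\pi_n},\beta_u}$, the uniqueness statement in Proposition~\ref{patmston} forces $\pi_n\cong n\,\pi_{\mu_n}$. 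Combining this with the reduction in the first paragraph (and collecting, for each multiplicity $n$, the mutually singular pieces contributed by each $\pi_j$) yields the desired global decomposition \eqref{adpim12n}. For the uniqueness of the $\mu_n$ up to equivalence, the key point is that $\ccW^\pi$ and $\rho^\pi$ are intrinsic to $\pi$, so the measure classes $[\mu_n]$ are determined by the classical uniqueness of spectral multiplicity decompositions for unitary representations of abelian groups.

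The main obstacle I expect is verifying, in Step~3, that the lifting via Proposition~\ref{adprp-Gi-ii-iii}(v) is compatible with the direct integral structure — in particular that the subrepresentation realizing the multiplicity-$n$ piece sits inside $\cC^\infty(G,\ccW^{\pi_n})_{\rho^{\pi_n},\beta_u}$ in the way that matches the Section~\ref{sec-dirint} model $\pi_{\mu_n}\otimes\yek_{\C^n}$. The uniqueness assertion in Proposition~\ref{patmston} is the crucial tool that converts the abstract commutant description into the concrete direct integral formula.
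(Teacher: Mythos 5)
Your proposal is correct and follows essentially the same route as the paper: reduce to bounded type via Proposition \ref{pipij}, decompose $(\rho^\pi,\ccW^\pi)$ by the spectral multiplicity theorem for the abelian group $H$, lift the decomposition to $\ccH$ through Proposition \ref{adprp-Gi-ii-iii}(v), identify the pieces with the Section \ref{sec-dirint} models using Propositions \ref{rglVgqendv}, \ref{adWolueH} and the uniqueness statement of Proposition \ref{patmston}, and deduce uniqueness of the $\mu_n$ from the abelian multiplicity theory applied to $\rho^\pi$. The only cosmetic difference is that the paper first settles the cyclic case and then splits each multiplicity block into cyclic summands, whereas you compare whole blocks $n\,\pi_{\mu_n}$ with the lifted summands at once; both arguments rest on exactly the same propositions.
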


\begin{proof}
Let
$\ccW^\pi:=\oline{(\ccH^\infty)^{\g n}}$ and
set $\rho^\pi(h):=\pi(h)\big|_\ccW$ for every $h\in H$.
First we prove the 
existence of the direct sum decomposition \eqref{adpim12n}.

\textbf{Step 1.} Assume that $(\rho^\pi,\ccW^\pi)$ is a cyclic representation of $H$, and the spectral 
measure $\mathrm E^\pi$, defined in \eqref{dfn-Epi}, has compact support. Then 
there exists a finite positive Borel measure with compact support $\mu$ on $\whH_+$ such that $(\rho^\pi,\ccW^\pi)\cong(\rho_\mu,\ccW_\mu)$. 
Proposition \ref{prpPoScyq} and Proposition \ref{rglVgqendv} imply that 
 $(\pi,\ccH)$ can be realized as a $G$-invariant Hilbert space 
 in $\cC^\infty(G,\ccW_\mu)_{\rho_\mu,\beta_u}$.
For a similar reason,  
 $(\pi_\mu,\ccH_\mu)$ can also be realized as a $G$-invariant Hilbert space in 
 $\cC^\infty(G,\ccW_\mu)_{\rho_\mu,\beta_u}$.
Therefore by Proposition \ref{patmston} we obtain
$(\pi,\ccH)\cong(\pi_\mu,\ccH_\mu)$.

\textbf{Step 2.} Assume that $(\rho^\pi,\ccW^\pi)$ is not necessarily cyclic, but
the support of $\mathrm E^\pi$ is compact. 
Then \[
\rho^\pi\cong\rho_{\mu_1^{}}
\oplus 
2\rho_{\mu_2^{}}
\oplus\cdots
\oplus
\infty\,\rho_{\mu_\infty^{}},
\]
where the $\mu_n$ are mutually singular finite positive Borel measures on $\whH_+$. By
Proposition \ref{adprp-Gi-ii-iii}(v), the above decomposition of $\rho^\pi$ corresponds to a decomposition
\[
\pi=\pi_1\oplus 2\pi_2\oplus\cdots\oplus
\infty\,\pi_\infty
\]
where $(\ccH_k^\infty)^\g n=\oline{(\ccH_k^\infty)^\g n}=\ccW_{\mu_k^{}}$ for $k\in\N\cup\{\infty\}$ (here $\ccH_k$ is the Hilbert space of $\pi_k$, and $\ccW_{\mu_k^{}}$ is the Hilbert space of 
$\rho_{\mu_k^{}}$). Since the representations $(\rho_{\mu_k^{}},\ccW_{\mu_k^{}})$ are cyclic, 
 Step 1 implies that $(\pi_k,\ccH_k)\cong(\pi_{\mu_k^{}},\ccH_{\mu_k^{}})$ for every $k\in\N\cup\{\infty\}$. This results in the
  decomposition \eqref{adpim12n} for $(\pi,\ccH)$.\\
  
 \textbf{Step 3.} Finally, note that Proposition \ref{pipij} together with Step 2 imply the existence of the direct sum decomposition \eqref{adpim12n} without any assumptions on $(\rho^\pi,\ccW^\pi)$.

Next we prove uniqueness of the decomposition
\eqref{adpim12n}. Suppose that 
a decomposition of $(\pi,\ccH)$ as in 
\eqref{adpim12n} is given. 
By considering $\g n$-invariants, and using   Proposition 
\ref{adWolueH},
 we obtain a corresponding direct sum decomposition 
\begin{equation}
\label{eQQrhoPId}
\rho^\pi=\rho_{\mu_1^{}}\oplus 2\rho_{\mu_2^{}}\oplus
\cdots\oplus
\infty\,\rho_{\mu_\infty^{}},
\end{equation}
where the $\mu_n$ are mutually singular finite positive Borel measures.
Since $H$ is a locally compact abelian group,  the decomposition \eqref{eQQrhoPId} 
 is unique up to equivalence of measures. Therefore the decomposition of $(\pi,\ccH)$ is also unique.
\end{proof}

\end{document}